\colorlet{color2}{magenta}
\colorlet{color3}{blue}
\colorlet{color1}{cyan}
\colorlet{color4}{orange}
\colorlet{color5}{violet}
\newlength{\nodedistance}
\newcommand{\labelLeft}[2]{\node[left of=#1, xshift=.5\nodedistance] (#1l) {#2};}
\newcommand{\labelRight}[2]{\node[right of=#1, xshift=-.5\nodedistance] (#1l) {#2};}
\newcommand{\labelAbove}[2]{\node[above of=#1, yshift=-.5\nodedistance] (#1l) {#2};}
\newcommand{\labelBelow}[2]{\node[below of=#1, yshift=.5\nodedistance] (#1l) {#2};}
\tikzstyle{vertex}=[line width=1pt, color=black, circle, fill=black, minimum size=5pt,inner sep=0pt,draw]
\tikzstyle{edge} = [-, thick, color=black!50]
\newcommand{\drawEdge}[2]{
	\draw[-, line width=1.75pt, color=white] (#1) edge (#2);
	\draw[edge] (#1) edge (#2);
}
\newcommand{\drawEdgeCol}[3]{
	\draw[-, line width=1.75pt, color=white] (#1) edge (#2);
	\draw[edge, #3] (#1) edge (#2);
}
\newcommand{\convexpath}[2]{
	[   
	create hullnodes/.code={
		\global\edef\namelist{#1}
		\foreach [count=\counter] \nodename in \namelist {
			\global\edef\numberofnodes{\counter}
			\node at (\nodename) [draw=none,name=hullnode\counter] {};
		}
		\node at (hullnode\numberofnodes) [name=hullnode0,draw=none] {};
		\pgfmathtruncatemacro\lastnumber{\numberofnodes+1}
		\node at (hullnode1) [name=hullnode\lastnumber,draw=none] {};
	},
	create hullnodes
	]
	($(hullnode1)!#2!-90:(hullnode0)$)
	\foreach [
	evaluate=\currentnode as \previousnode using \currentnode-1,
	evaluate=\currentnode as \nextnode using \currentnode+1
	] \currentnode in {1,...,\numberofnodes} {
		let
		\p1 = ($(hullnode\currentnode)!#2!-90:(hullnode\previousnode)$),
		\p2 = ($(hullnode\currentnode)!#2!90:(hullnode\nextnode)$),
		\p3 = ($(\p1) - (hullnode\currentnode)$),
		\n1 = {atan2(\y3,\x3)},
		\p4 = ($(\p2) - (hullnode\currentnode)$),
		\n2 = {atan2(\y4,\x4)},
		\n{delta} = {-Mod(\n1-\n2,360)}
		in 
		{-- (\p1) arc[start angle=\n1, delta angle=\n{delta}, radius=#2] -- (\p2)}
	}
	-- cycle
}
\newtheorem{theorem}{Theorem}
\newtheorem{lemma}[theorem]{Lemma}
\newtheorem{corollary}[theorem]{Corollary}
\newtheorem{observation}[theorem]{Observation}
\newtheorem{proposition}[theorem]{Proposition}
\newtheorem*{conjecture*}{Conjecture}
\theoremstyle{definition}
\newtheorem{definition}[theorem]{Definition}
\newlist{Claim}{description}{2}
\setlist[Claim]{labelindent=2em,leftmargin=*}
\newcounter{claim}[theorem]
\let\originalqedsymbol\qedsymbol
\newenvironment{subclaim}{
	\renewcommand{\qedsymbol}{$\blacksquare$}
	\refstepcounter{claim}%
	\let\theclaimcounter\theclaim%
	\Claim\item[\textbf{Claim \theclaimcounter:}]%
}{\endClaim\let\qedsymbol\originalqedsymbol}
\newcommand{\tw}{\operatorname{tw}}
\newcommand{\td}{(\mathcal{B}, \mathcal{T})}
\newcommand{\card}[1]{\lvert#1\rvert}
\newcommand{\skein}[3]{$#1$-$#2$ $#3$-skein\xspace}
\newcommand{\skeins}[3]{$#1$-$#2$ $#3$-skeins\xspace}
\newcommand{\secondCoordP}{\textsc{Second Coordinate in Connectivity Pair}\xspace}
\newcommand{\scp}{\textsf{2-CP}\xspace}
\newcommand{\parvc}{\textsf{PVCB}\xspace}
\newenvironment{thmenum}
{\begin{enumerate}[label=(\roman*),ref=\thetheorem.(\roman*),noitemsep]}
	{\end{enumerate}}
\author{Sebastian S. Johann, Sven O. Krumke and Manuel 
Streicher\thanks{Corresponding Author, 
		streicher@mathematik.uni-kl.de}}
\affil[]{Technische Universit\"at Kaiserslautern}
\title{On the Mixed Connectivity Conjecture of Beineke and Harary}
\newcommand{\problemtitle}[1]{\gdef\@problemtitle{#1}}
\newcommand{\probleminput}[1]{\gdef\@probleminput{#1}}
\newcommand{\problemquestion}[1]{\gdef\@problemquestion{#1}}
	\par\addvspace{.5\baselineskip}
	\noindent\textbf{\@problemtitle}
	\par\addvspace{.5\baselineskip}
\begin{document}

\maketitle

\begin{abstract}
	\noindent The conjecture of Beineke and Harary states that for any two vertices which can be separated by $k$ vertices and $l$ edges for $l\geq 1$ but neither by~$k$ vertices and $l-1$ edges nor $k-1$ vertices and $l$ edges there are~$k+l$ edge-disjoint paths connecting these two vertices of which $k+1$ are internally disjoint.
	In this paper we consider this conjecture for~$l=2$ and any $k\in \mathbb{N}$.
	Afterwards, we utilize this result to prove that the conjecture holds for all graphs of treewidth at most $3$ and all $k$ and $l$.
	We also show that it is \textsf{NP}-complete to decide whether two vertices can be separated by~$k$ vertices and $l$ edges.
\end{abstract}
\section{Introduction}
\label{sec:introduction}

Connectivity is an extensively studied property of graphs. A well-known Theorem
of Menger establishes equality between the vertex connectivity for a given pair
of non-adjacent vertices and the maximum number of internally disjoint paths
between this pair as well as the edge connectivity for a given pair of vertices
and the maximum number of edge-disjoint paths between this pair. There are many
variation and extensions of Menger's Theorem. For example Aharoni and Berger
proved a version of Menger's Theorem for infinite graphs,
\textit{cf.}~\cite{AB08}, and Borndörfer and Karbstein interpreted and proved
Menger's Theorem in hypergraphs, \textit{cf.}~\cite{BK12}. In this paper we
focus on a form of connectivity in which vertices and edges may be removed at
the same time. One variant of \emph{mixed connectivity} was considered by Egawa,
Kaneko and Matsumoto~\cite{EKM91}. They prove the following mixed version of
Menger's Theorem: Between two vertices $v,w$ of a graph there are $\lambda$
edge-disjoint unions of $k$ internally disjoint paths if and only if for each
set $S$ of $0\leq r\leq\min\{k-1, |V(G)|-2\}$ vertices the graph $G-S$ contains
$\lambda(k-r)$ edge-disjoint $v$-$w$ paths. Beineke and Harary~\cite{B01}
proposed an alternative form of mixed connectivity between pairs of vertices.
They call a pair of non-negative integers $(k,l)$ \emph{connectivity pair} for
distinct vertices $s$ and $t$ if they can be separated by removing $k$ vertices
and $l$ edges, but neither by~$k$ vertices and $l-1$ edges nor $k-1$ vertices
and $l$ edges. In~\cite{B01} Beineke and Harary claim to have proved a mixed
version of Menger's Theorem: If $(k,l)$ is a connectivity pair for $s$ and $t$, then
there exist $k+l$ edge-disjoint $s$-$t$ paths $k$ of which are internally
disjoint. Mader pointed out in~\cite{Mader79}, that the proof is erroneous.
%
%

The most meaningful result on the conjecture by Beineke and Harary to date is
due to Enomoto and Kaneko, \textit{cf.}~\cite{enomoto1994}. They first extended
the conjecture claiming that it is possible to find $k+1$ internally disjoint
paths instead of just $k$ under the additional assumption that $l\geq 1$ and
then proved their statement for certain $k$ and $l$. The exact result is
restated as Theorem~\ref{thm:enomoto_kaneko} in Section~\ref{sec:preliminaries}.

From our studies the following conjecture originally formulated by Beineke and 
Harary in~\cite{B01} and extended by Enomoto and Kaneko in~\cite{enomoto1994} 
may hold. In the remainder of this article we refer to the conjecture by the 
name \emph{Beineke-Harary-Conjecture}.

\begin{conjecture*}[Beineke-Harary-Conjecture]
	\label{con:beineke_harary}
	Let $G$ be a graph, $s,t\in V(G)$ distinct vertices and $k,l$ non-negative 
	integers with $l\geq 1$. If $(k,l)$ is a connectivity pair for $s$ and $t$ 
	in $G$, then there exist $k+l$ edge-disjoint paths, of which $k+1$ are 
	internally disjoint.
\end{conjecture*}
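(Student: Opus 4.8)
The plan is to argue by induction on $l$, taking the theorem of Enomoto and Kaneko (Theorem~\ref{thm:enomoto_kaneko}) together with Menger's theorem as the base. For $l=1$ a connectivity pair $(k,1)$ means in particular that $s$ and $t$ cannot be separated by $k$ vertices alone, so Menger's theorem supplies $k+1$ internally disjoint $s$-$t$ paths; these are also edge-disjoint, which is exactly the assertion for $l=1$. More generally I would use the cases already covered by Theorem~\ref{thm:enomoto_kaneko} as the base of the induction.

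For the inductive step, assume $l\geq 2$ and fix a minimum mixed separator consisting of $k$ vertices $X$ and $l$ edges $F$. I would first prove a reduction lemma: for every $e\in F$, the pair $(k,l-1)$ is a connectivity pair for $s$ and $t$ in $G-e$. Indeed $X$ together with $F\setminus\{e\}$ separates $s$ and $t$ in $G-e$, giving a separator of type $(k,l-1)$; and any $(k,l-2)$- or $(k-1,l-1)$-separator of $G-e$ would, by adding $e$ back to its edge set, become a $(k,l-1)$- or $(k-1,l)$-separator of $G$, contradicting the hypothesis that $(k,l)$ is a connectivity pair. The induction hypothesis applied to $G-e$ then yields $k+l-1$ edge-disjoint $s$-$t$ paths, $k+1$ of which are internally disjoint.

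It remains to produce, inside $G$, one further $s$-$t$ path that is edge-disjoint from these $k+l-1$ paths. The structural advantage here is that this extra path is only required to be edge-disjoint, not internally disjoint, from the others; hence if $G$ minus the edges used by the $k+l-1$ paths still contains an $s$-$t$ path, we may simply add it, and the internal disjointness of the distinguished $k+1$ paths is untouched. Phrased in flow language, the $k+l-1$ edge-disjoint paths form an integral $s$-$t$ flow and I would try to augment it by one unit along an augmenting path through $e$.

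The hard part will be the case in which no such augmenting path avoids the interiors of the distinguished $k+1$ paths. Two difficulties arise at once. First, one must rule out that the flow is stuck at value $k+l-1$: if the edges of the $k+l-1$ paths formed an $s$-$t$ edge cut of $G$, one would have to combine this cut with the $k+1$ vertex-disjoint paths to manufacture a $(k,l-1)$- or $(k-1,l)$-separator and so contradict the connectivity pair, and this conversion is already delicate for $k\geq 2$ because the connectivity pair does not by itself bound the pure edge connectivity. Second, an augmenting path that reverses flow along the distinguished paths can merge or reroute them and thereby destroy their internal disjointness, so one must either steer the augmentation clear of their interiors or re-extract $k+1$ internally disjoint paths from the augmented flow afterwards. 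Controlling this simultaneous interaction of the edge- and vertex-disjointness requirements is the genuine obstacle, and it is the reason the argument is expected to close cleanly only for small $l$, such as $l=2$, or within structurally restricted classes such as graphs of bounded treewidth.
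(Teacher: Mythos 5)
First, a point of orientation: the statement you were asked to prove is the \emph{open conjecture} of this paper, not one of its theorems. The paper itself proves it only in the special cases $l=2$ for arbitrary $k$ (Theorem~\ref{the:main_result}, via the two-source strengthening Theorem~\ref{the:main}) and for graphs of treewidth at most $3$ (Theorem~\ref{thm:conjecture_tw_3}); there is no proof of the general statement to compare yours against, and a correct complete proof would be a substantial new result.

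Your skeleton contains correct pieces. The reduction lemma is sound: if $(X,F)$ is a disconnecting pair of order $k$ and size $l$ and $e\in F$, then $(k,l-1)$ is indeed a connectivity pair for $s$ and $t$ in $G-e$, because any disconnecting pair of $G-e$ violating this lifts back to a forbidden pair of $G$ after adjoining $e$. Moreover, your first worry (that the edge-disjoint-path count could be stuck at $k+l-1$) can be dispatched: by Lemma~\ref{lem:conjecture_non_adjacent_suffices} one may assume $s$ and $t$ non-adjacent, and then any $s$-$t$ edge cut of size $m$ with $l<m<k+l$ could have $m-l$ of its edges replaced by endvertices outside $\{s,t\}$, yielding a disconnecting pair of order at most $k-1$ and size $l$ of cardinality less than $k+l$ \textemdash\ a contradiction; edge cuts of size $m\leq l$ are forbidden outright. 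Hence the edge connectivity is at least $k+l$ and $k+l$ edge-disjoint paths always exist by Theorem~\ref{thm:mengers_theorem}. The genuine gap is exactly the step you flag yourself: an augmenting path may run backwards through edges of the distinguished $k+1$ internally disjoint paths, and the re-decomposed flow of value $k+l$ need not contain $k+1$ internally disjoint paths; nothing in your sketch controls this interaction, so the induction does not close. This is not a cosmetic difficulty: solving the ``one extra edge-disjoint path'' problem is precisely the content of the conjecture, and it is what the paper's Theorem~\ref{the:main} accomplishes for $l=2$ by a very different mechanism \textemdash\ not flow augmentation, but an induction on $\card{E(G)}$ over a strengthened statement with two sources $s_1,s_2$, in which the \skein{s_1}{t}{(k+1)} is repeatedly repaired while an auxiliary $s_2$-$t$ path is advanced edge by edge. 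In short: your reduction step is fine, your base cases are fine, but the inductive step is missing its core argument, and since your strategy would otherwise prove the full open conjecture, that missing argument is the entire difficulty rather than a detail to be routinely filled in.
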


Our main contribution is to prove the conjecture for $l=2$ and any
$k\in\mathbb{N}$. It is worth noting that for $l=2$ the conjecture has
\emph{not} been proved for any $k>1$. In particular, the result of Enomoto and
Kaneko does not apply to these cases and their proof does not appear to have an
easy adaption for these cases. The techniques used to prove the conjecture for
$l=2$ are novel. The main idea is to start with $k$ internally disjoint paths
and then find the missing path by inductively moving through the graph and
adjusting the $k$ internally disjoint paths whenever necessary. We observe that
our result, together with the result due to Enomoto and Kaneko,
\textit{cf.}~Theorem~\ref{thm:enomoto_kaneko}, implies that the
Beineke-Harary-Conjecture holds for $k=2$ and all $l\in\mathbb{N}$. We then
utilize this fact to prove the Beineke-Harary-Conjecture for all graphs that
have treewidth at most $3$. 

It is well known that the problem of determining the vertex- or
edge-connectivity between a pair of vertices is polynomial time solvable. It can
be done by computing maximum flows in related graphs. This leads to the obvious
question about the complexity of determining all connectivity pairs for a pair
of vertices. This question is also raised by Oellermann
in~\cite{beineke2012topics}. It can be observed that for a graph $G$, two
vertices $s$ and $t$ and sensibly chosen $k\in\mathbb{N}$, there is a unique
$l_k\in\mathbb{N}$ such that $(k,l_k)$ is a connectivity pair for $s$ and $t$ in
$G$, cf.~Observation~\ref{obs:second_coordinate_unique}. The approaches for
determining the vertex- or edge-connectivity between a pair of vertices do not
transfer to connectivity pairs: In this paper we give a formal proof that the
decision version of the raised question is \textsf{NP}-complete by reducing
\textsc{Bipartite Partial Vertex Cover} to it, which is known to be
\textsf{NP}-complete, \textit{cf.}~\cite{caskurlu}. This fact is surprising and
underlines the complex nature of mixed connectivity related problems. 

\paragraph*{Outline} After we state some basic definitions in
Section~\ref{sec:preliminaries}, we establish some preliminary results on connectivity
pairs and the Beineke-Harary-Conjecture in Section~\ref{sec:cp_foundations}. We
present the proof of our main result in the subsequent
Section~\ref{sec:main_result}. Section~\ref{sec:treewidth} focuses on proving
the conjecture on graphs of small treewidth. Finally, in
Section~\ref{sec:complexity} we prove the intractability of the computation of
the second coordinate in a connectivity pair.

\section{Preliminaries}
\label{sec:preliminaries}

Most of our notation is standard graph terminology as can be found
in~\cite{Diestel00, Wes01}. We recall some basic notations in the following. The
graphs under consideration may contain parallels but no loops. For a graph $G$,
we refer to the vertex set of the graph $G$ by $V(G)$ and to the edge set by
$E(G)$. We denote an edge joining vertices $u,v\in V(G)$ by $uv$. Note that the
exact choice of the edge, if parallel edges are present, is not of relevance to
any of our proofs. For $U,V\subseteq V(G)$ and $u\in U$ and $v\in V$ we call
$uv$ a $U$-$V$ edge. The set of all $U$-$V$ edges in $E(G)$ is denoted by
$E(U,V)$; instead of $E(\{u\},V)$ and $E(U,\{v\})$ we write $E(u,V)$ and
$E(U,v)$.  If $H$ is another graph we denote by $G\cup H$ the graph with vertex
set $V(G)\cup V(H)$ and edge set $E(G)\cup E(H)$, where we assume that equal
edges join the same set of endvertices. For a subset of vertices $S\subseteq
V(G)$ we denote by $G[S]$ the graph \emph{induced} by $S$, that has vertex set
$S$ and all edges joining vertices of $S$. Further, we denote by $G-S$ the graph
$G[V(G)\setminus S]$. For a subset $E^\prime\subseteq E$ we write $G-E^\prime$
for the graph with vertex set $V(G)$ and edge set $E\setminus E^\prime$. To
simplify notation we write $G-v$ and $G-e$ instead of $G-\{v\}$ and $G-\{e\}$
for $v\in V(G)$ and~$e\in E(G)$.

A \emph{path} $P=v_0\dots v_k$ is a graph with vertex set $\{v_0,\dots,v_k\}$
and edge set of the form $\set{v_iv_{i+1}\colon i=0,\dots,k-1}$, where all vertices are
distinct except possibly $v_0$ and~$v_k$. If $v_0\neq v_k$ we refer to $P$ as a
$v_0$-$v_k$ path. We denote by $v_iPv_j$ with $i \leq j$ the subpath
$v_iv_{i+1}\ldots v_j$. If $v_i=v_0$ ($v_j=v_k$) for simplicity of notation we
also write $Pv_j$ ($v_iP$). Two or more paths are \emph{edge-disjoint} if no two
paths use the same edge. Two or more $s$-$t$ paths are \emph{internally
	disjoint} if they only share the vertices $s$ and $t$. If $P_1,\dots,
P_k$ are internally disjoint $s$-$t$ paths, we call the graph
$\bigcup_{i=1}^k P_i$ an \emph{\skein{s}{t}{k}}.

For distinct vertices $s$ and $t$ we say that a set $W\subseteq
V(G)\setminus\{s,t\}$ ($F\subseteq E(G)$) \emph{separates} $s$ and~$t$ in $G$ if
$s$ and~$t$ are not connected in $G-W$ ($G-F$). In this case we call~$W$ ($F$) an
\emph{$s$-$t$ vertex-(edge-)separator}. If $s$ and $t$ are non-adjacent, we denote by
$\kappa_G(s,t)$ the size of a smallest vertex-separator for $s$ and~$t$, where
we omit the subscript $G$ if the graph is clear from context. For a graph $G$ a
set $W$ is a \emph{vertex-separator} if $G-W$ is not connected.

\section{Connectivity Pairs and Foundations of the Beineke-Harary-Conjecture}
\label{sec:cp_foundations}

\begin{samepage}
In this section we provide the formal definition for connectivity pairs, recall
some basic results on the Beineke-Harary-Conjecture, and establish further basic
results on mixed separators and the conjecture.
\end{samepage}

\newpage

\begin{samepage}
\begin{definition}[Disconnecting Pair]
	Let $G$ be a graph and $S,T\subseteq V(G)$. We call a pair $(W,F)$ with 
	$W\subseteq V(G)\setminus \left(S\cup T\right)$ and $F\subseteq E(G)$ an 
	$S$-$T$ \emph{disconnecting pair} if in~$G-W - F$ there 
	is no path from a vertex in $S$ to a vertex in $T$.
	
	We call the number of edges in a disconnecting pair its \emph{size}, the 
	number of vertices in a disconnecting pair its \emph{order} and the number 
	of elements $\card{W}+\card{F}$ to be its \emph{cardinality}.
	
	If $S=\{s\}$ or $T=\{t\}$ consists of only one element we omit the set 
	brackets in the notation and also write $s$-$t$ disconnecting pair.
\end{definition}
\end{samepage}

Beineke and Harary introduced \emph{connectivity pairs} in their paper from 
1967~\cite{B01}. We recall their definition in the following.

\begin{definition}[Connectivity Pairs]
	Let $G$ be a graph and $s,t\in V(G)$ distinct vertices. 
	We call an ordered pair of non-negative integers $(k,l)$ a \emph{connectivity pair} 
	for $s$ and $t$ in~$G$ if
	\begin{enumerate}
		\item there exists an $s$-$t$ disconnecting pair of order $k$ and size $l$ and
		
\item there is no $s$-$t$ disconnecting pair of cardinality less than $k+l$
having order at most $k$ and size at most $l$.\label{prop2_connectivity_pair}
	\end{enumerate}	
\end{definition}
\noindent As Property~\ref{prop2_connectivity_pair} implies, that there exist
$k$ vertices other than $s$ and $t$ and at least $l$ edges, we may replace
Property~\ref{prop2_connectivity_pair} by
\begin{align*}
	\begin{minipage}{.9\textwidth}
		\begin{itemize}
			\item[\ref{prop2_connectivity_pair}$'$] there is no $s$-$t$ 
			disconnecting pair of order $k$ and size $l-1$ or order $k-1$ and size $l$.
		\end{itemize}
	\end{minipage}
\end{align*}
Further, if there are fewer than $l$ edges between $s$ and $t$, then we can
replace Property~\ref{prop2_connectivity_pair} by
\begin{align*}
	\begin{minipage}{.9\textwidth}
		\begin{itemize}
			\item[\ref{prop2_connectivity_pair}$''$] there is no $s$-$t$ 
			disconnecting pair of order $k$ and size $l-1$
		\end{itemize}
	\end{minipage}
\end{align*}
This is true since we may replace any edge in an $s$-$t$ disconnecting pair by 
a vertex incident to it unless the edge joins $s$ and $t$.

The Beineke-Harary-Conjecture is, in some sense, a mixed version of Menger's 
Theorem. As we make use of it, we recall three versions of Menger's 
Theorem here.

\begin{theorem}[Menger's Theorem]\label{thm:mengers_theorem}
	Let $s$ and $t$ be two distinct vertices of a graph~$G$.
	\begin{thmenum}
		
		\item If $st\notin E(G)$, then the minimum number of vertices separating $s$
and $t$ in~$G$ is equal to the maximum number of internally disjoint
$s$-$t$ paths.\label{thm:menger_vertex}

\item The minimum number of edges separating $s$ and $t$ in $G$ is equal to the
maximum number of edge-disjoint $s$-$t$ paths in $G$.\label{thm:menger_edge}

\item The minimum cardinality of an $s$-$t$ disconnecting pair is equal to the
maximum number of internally disjoint $s$-$t$ paths.
\label{thm:menger_variant_adjacent_vertices}
	\end{thmenum}
\end{theorem}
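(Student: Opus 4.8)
The plan is to establish part~\ref{thm:menger_vertex} from first principles and then derive parts~\ref{thm:menger_edge} and~\ref{thm:menger_variant_adjacent_vertices} by reductions to it. For~\ref{thm:menger_vertex} the inequality ``$\geq$'' is routine: if $P_1,\dots,P_p$ are internally disjoint $s$-$t$ paths and $W$ is any $s$-$t$ vertex-separator, then each $P_i$ must contain an internal vertex lying in $W$, and because the $P_i$ share no internal vertex these vertices are pairwise distinct; hence $p\leq\card{W}$, and in particular the maximum number of internally disjoint paths is at most $\kappa_G(s,t)$. The content of the theorem is the reverse inequality, namely the construction of $\kappa_G(s,t)$ internally disjoint $s$-$t$ paths.

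For this I would argue by induction on $\card{E(G)}$, which I expect to be the main obstacle and the only genuinely nontrivial part. Writing $k=\kappa_G(s,t)$, the base case $\card{E(G)}=0$ forces $k=0$ and is trivial. In the inductive step one selects an edge $e=xy$ with $x,y\notin\{s,t\}$ and compares the separation numbers of $G-e$ and of the contraction $G/e$ with $k$: if deleting or contracting $e$ leaves the separation number at least $k$, then the induction hypothesis applied to the smaller graph yields the desired paths, after lifting a contraction if necessary. The delicate case is when every minimum separator of $G$ is ``used up'' by $x$ and $y$; here one splits $G$ along such a separator into an $s$-side and a $t$-side, applies the hypothesis to each side, and glues the partial path systems at the separator. (Alternatively, the same statement follows from the integral max-flow--min-cut theorem applied to the unit-capacity digraph obtained by splitting each vertex $v\notin\{s,t\}$ into an arc of capacity one, in which minimum cuts correspond exactly to $s$-$t$ vertex-separators and integral flows decompose into internally disjoint paths.)

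Given~\ref{thm:menger_vertex}, part~\ref{thm:menger_edge} is obtained either by an analogous induction or by a standard (line-graph based) auxiliary-graph construction that turns edge-disjoint $s$-$t$ paths into internally disjoint paths and edge-separators into vertex-separators, reducing~\ref{thm:menger_edge} to~\ref{thm:menger_vertex}. Finally, for part~\ref{thm:menger_variant_adjacent_vertices} the inequality ``$\geq$'' follows as before, since internally disjoint paths are in particular edge-disjoint, so each vertex or edge of a disconnecting pair meets at most one of them. For ``$\leq$'' I would distinguish two cases using the replacement observation made before the theorem. If $s$ and $t$ are non-adjacent, then any edge of a minimum disconnecting pair may be exchanged for an incident vertex, so a minimum disconnecting pair can be taken with empty edge set, its cardinality equals $\kappa_G(s,t)$, and~\ref{thm:menger_vertex} applies. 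If $s$ and $t$ are joined by $m\geq 1$ parallel edges, then every disconnecting pair must contain all $m$ of them, because a direct $s$-$t$ edge has no internal vertex and can only be destroyed by deleting it; removing these edges makes $s$ and $t$ non-adjacent, and combining the $m$ single-edge paths with the internally disjoint paths furnished by the non-adjacent case yields the required bound.
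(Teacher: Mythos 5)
Your proposal is correct, and for the only part the paper proves in detail---statement (iii)---your argument is essentially the paper's own: every edge joining $s$ and $t$ is forced into every disconnecting pair and each such edge constitutes an $s$-$t$ path internally disjoint from all others, while any other edge of a disconnecting pair can be exchanged for an endvertex outside $\{s,t\}$, which reduces (iii) to (i); you merely organise this as two cases (non-adjacent, adjacent) where the paper states it in one breath. The genuine difference is scope: the paper treats (i) and (ii) as classical and cites \cite{Wes01}, whereas you sketch proofs of both, (i) by induction on $\card{E(G)}$ (or by integral max-flow--min-cut after vertex splitting) and (ii) by reduction to (i) via an auxiliary-graph construction. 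That buys self-containedness at the cost of detail you have not actually supplied: your inductive step for (i) is only an outline---it silently assumes an edge $e=xy$ with $x,y\notin\{s,t\}$ exists (the case where every edge meets $\{s,t\}$ needs a separate, easy argument), and the ``splitting and gluing'' in the delicate case requires introducing new terminal vertices on each side to keep the separator sizes under control. Since these are textbook facts, citing them, as the authors do, is the cleaner choice; what matters is that your derivation of (iii), the part that actually interacts with the rest of the paper, coincides with theirs.
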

\begin{proof}
	Proofs for the statements~\ref{thm:menger_vertex} and~\ref{thm:menger_edge} can
be found, for example, in~\cite{Wes01}. The
statement~\ref{thm:menger_variant_adjacent_vertices} is a direct consequence
of~\ref{thm:menger_vertex}: Any edge joining $s$ and $t$ induces an $s$-$t$
path that is internally disjoint to all other $s$-$t$ paths. Also every
edge joining $s$ and $t$ is contained in every $s$-$t$ disconnecting pair. The
statement now follows considering that any edge in an $s$-$t$ disconnecting
pair that does not join $s$ and $t$ can be replaced by one of its endvertices.
\end{proof}
Menger's Theorem implies the Beineke-Harary-Conjecture for a couple of base
cases regarding the integers $k$ and $l$.

\begin{observation}\label{obs:bineke_harary_l=1_or_k=0}
	Let $k\geq 0$ and $l\geq 1$ be integers and let $s$ and $t$ be two distinct 
	vertices of a graph~$G$.
	\begin{enumerate}
		\item If $(k,0)$ is a connectivity pair for $s$ and $t$, then $s$ and $t$ are
not adjacent. Further, the minimum number of vertices separating $s$ and $t$
is $k$ and, by Menger's Theorem, there exist $k$ internally disjoint $s$-$t$
paths.\label{obs:l=0}

\item If $(k,1)$ is a connectivity pair for $s$ and $t$, then $s$ and $t$ are
$k+1$ vertex-connected in $G$ and hence, by Menger's Theorem, there are $k+1$
internally disjoint paths between $s$ and $t$.\label{obs:l=1}

\item If $(0,l)$ is a connectivity pair for $s$ and $t$, then $s$ and $t$ are
$l$ edge-connected in $G$ and hence, by Menger's Theorem, there are $l$
edge-disjoint paths between $s$ and $t$.\label{obs:k=0}
	\end{enumerate}
\end{observation}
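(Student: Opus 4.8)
The plan is to derive each of the three parts directly from the definition of a connectivity pair together with the matching version of Menger's Theorem (Theorem~\ref{thm:mengers_theorem}). In each case I would first rephrase the two defining conditions as a statement about the size of a minimum separator, and then read off the paths from Menger. The one auxiliary fact I rely on repeatedly is the remark, already recorded above, that any edge of an $s$-$t$ disconnecting pair that does not join $s$ and $t$ can be replaced by one of its endvertices without changing the cardinality.

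Parts~\ref{obs:l=0} and~\ref{obs:k=0} are the straightforward ones. For~\ref{obs:l=0}, the existence condition provides an $s$-$t$ disconnecting pair of order $k$ and size $0$, that is, a vertex-separator $W$ with $\card{W}=k$; since $W\subseteq V(G)\setminus\{s,t\}$ leaves any edge joining $s$ and $t$ intact, the existence of such a separator already forces $s$ and $t$ to be non-adjacent. Property~\ref{prop2_connectivity_pair} forbids any disconnecting pair of cardinality less than $k$ with no edges, hence any vertex-separator of size less than $k$, so the minimum vertex-separator has size exactly $k$ and Theorem~\ref{thm:menger_vertex} yields $k$ internally disjoint paths. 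For~\ref{obs:k=0}, the existence condition gives an edge-separator of size $l$, while Property~\ref{prop2_connectivity_pair} (with $k=0$, so every competing pair has order $0$) rules out any edge-separator of size less than $l$; thus the minimum edge-separator has size $l$ and Theorem~\ref{thm:menger_edge} supplies $l$ edge-disjoint paths.

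The only part requiring genuine care is~\ref{obs:l=1}, since here $s$ and $t$ may be adjacent and I cannot simply pass to a pure vertex-separator. I would instead invoke Theorem~\ref{thm:menger_variant_adjacent_vertices} and argue that the minimum cardinality of an $s$-$t$ disconnecting pair equals $k+1$. The existence condition gives one of cardinality $k+1$. For the reverse inequality, suppose for contradiction that some disconnecting pair has cardinality at most $k$. Replacing each of its edges that does not join $s$ and $t$ by an incident vertex produces a disconnecting pair of the same cardinality whose edges all join $s$ and $t$; because a disconnecting pair of size $1$ exists and every $s$-$t$ edge lies in every disconnecting pair, there is at most one such edge, so the new pair has size at most $1$ and order at most $k$, contradicting Property~\ref{prop2_connectivity_pair}. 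Hence the minimum cardinality is $k+1$ and the Menger variant gives $k+1$ internally disjoint paths. The main, and rather mild, obstacle is exactly this bookkeeping in the possibly-adjacent case; once the edge-to-vertex replacement is used to normalise disconnecting pairs, all three statements collapse to an application of Menger's Theorem.
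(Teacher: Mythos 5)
Your proposal is correct and follows the same route the paper intends: the paper states this observation without a separate proof, taking it as a direct consequence of the definition of a connectivity pair together with the appropriate version of Menger's Theorem, which is exactly what you flesh out (including the right choice of Theorem~\ref{thm:menger_variant_adjacent_vertices} for part~\ref{obs:l=1}, where $s$ and $t$ may be adjacent). Your bookkeeping in part~\ref{obs:l=1} --- normalising a disconnecting pair via the edge-to-endvertex replacement and noting that $\card{E(s,t)}\leq 1$ because a size-$1$ disconnecting pair exists --- is precisely the argument the paper's remark following Property~\ref{prop2_connectivity_pair} is meant to supply.
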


Another rather basic result implies that it suffices to prove the
Beineke-Harary-Conjecture for non-adjacent vertices as we see in the following two lemmas.

\begin{lemma}
	\label{lem:connectivity_pairs_with_st_edge}
	Let $G$ be a graph, $s,t\in V(G)$ be two distinct vertices and let $k,l$ be 
	non-negative integers. 
	The pair $(k,l)$ is a connectivity pair for $s$ and $t$ in $G$ if and only if 
	$(k,l-\card{E(s,t)})$ is a connectivity pair for $s$ and $t$ in $G-E(s,t)$.	
\end{lemma}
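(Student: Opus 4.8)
The plan is to exhibit a bijection between $s$-$t$ disconnecting pairs in $G$ and those in $G-E(s,t)$, and then read off both defining properties of a connectivity pair through it. Write $m:=\card{E(s,t)}$. The key preliminary observation, already used in the proof of Theorem~\ref{thm:menger_variant_adjacent_vertices}, is that every edge of $E(s,t)$ lies in \emph{every} $s$-$t$ disconnecting pair of $G$: such an edge forms an $s$-$t$ path of length one that can be destroyed only by deleting the edge itself, since a disconnecting pair is forbidden from deleting $s$ or $t$. Hence if $(W,F)$ is any $s$-$t$ disconnecting pair in $G$, then $E(s,t)\subseteq F$.

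This yields the intended bijection: send $(W,F)\mapsto(W,F\setminus E(s,t))$, with inverse $(W,F')\mapsto(W,F'\cup E(s,t))$. I would check that both directions are well defined, the point being that $G-W-F=(G-E(s,t))-W-(F\setminus E(s,t))$ as graphs, so one side has no $s$-$t$ path exactly when the other does; and that $F'\subseteq E(G)\setminus E(s,t)$ guarantees the union in the inverse map is disjoint. Under this bijection the order $\card{W}$ is unchanged while the size drops by exactly $m$, so disconnecting pairs of order $a$ and size $b$ in $G$ correspond precisely to disconnecting pairs of order $a$ and size $b-m$ in $G-E(s,t)$ (in particular, any disconnecting pair in $G$ has size at least $m$, so the shifted coordinate is non-negative whenever the relevant pairs exist).

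With the correspondence in hand, the two properties transfer term by term. The existence of an $s$-$t$ disconnecting pair of order $k$ and size $l$ in $G$ is equivalent, via the bijection, to the existence of one of order $k$ and size $l-m$ in $G-E(s,t)$, which matches the first defining property on each side. For the minimality property, a disconnecting pair in $G$ of order $a\le k$, size $b\le l$ and cardinality $a+b<k+l$ maps to one of order $a\le k$, size $b-m\le l-m$ and cardinality $a+(b-m)<k+(l-m)$, and conversely; since subtracting the constant $m$ from the size and from the cardinality bound preserves each of these inequalities, the pairs forbidden by Property~\ref{prop2_connectivity_pair} on the two sides correspond exactly. Thus the minimality property holds in $G$ for $(k,l)$ if and only if it holds in $G-E(s,t)$ for $(k,l-m)$, completing the equivalence.

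I expect the only real content to be the preliminary observation that $E(s,t)$ is forced into every disconnecting pair; everything after that is the routine but slightly fiddly bookkeeping of checking that the shift by $m$ respects the order, size, and cardinality inequalities in the minimality condition, where one must be careful that it is the \emph{cardinality} bound $k+l$, and not merely the size bound, that also shifts by $m$.
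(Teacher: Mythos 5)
Your proposal is correct and follows exactly the paper's route: the same key observation that every $s$-$t$ disconnecting pair in $G$ must contain all of $E(s,t)$, and the same bijection $(W,F)\mapsto(W,F\setminus E(s,t))$ between disconnecting pairs in $G$ and in $G-E(s,t)$. The paper compresses the remaining bookkeeping into ``the desired result follows immediately,'' whereas you spell out how the order, size, and cardinality bounds shift by $\card{E(s,t)}$; this is just a more explicit rendering of the identical argument.
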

\begin{proof}
	Any $s$-$t$ disconnecting pair in $G$ has to contain all edges in 
	$E(s,t)$. Thus, 
	we get a one-to-one correspondence between the $s$-$t$ disconnecting pairs 
	in $G$ and the ones in $G-E(s,t)$ by mapping a pair $(W,F)$ to the pair 
	$(W,F\setminus E(s,t))$. The desired result follows immediately.
\end{proof}

\begin{lemma}
	\label{lem:conjecture_non_adjacent_suffices}
	Let $\mathcal{G}$ be a class of graphs which is closed under deletion of edges.
	If the Beineke-Harary-Conjecture holds for all graphs $G\in\mathcal{G}$ and
	all vertices $s,t\in V(G)$ such that $s$ and $t$ are not adjacent, then the
	conjecture holds for all graphs $G\in\mathcal{G}$ and all vertices $s,t\in V(G)$.
\end{lemma}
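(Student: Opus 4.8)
The plan is to reduce the general case to the non-adjacent case assumed in the hypothesis, by deleting the edges joining $s$ and $t$ and afterwards reinserting them as trivial paths. I would fix a graph $G\in\mathcal{G}$ and distinct vertices $s,t$ with a connectivity pair $(k,l)$, $l\geq 1$. If $s$ and $t$ are non-adjacent there is nothing to do, so I assume they are adjacent and set $m=\card{E(s,t)}\geq 1$. Since every $s$-$t$ disconnecting pair must contain all of $E(s,t)$ (an edge joining $s$ and $t$ can only be destroyed by removing it, as $s,t$ never lie in the vertex part), any such pair has size at least $m$; hence $l\geq m$ and $l-m\geq 0$. Passing to $G'=G-E(s,t)$, which lies in $\mathcal{G}$ by closure under edge deletion and in which $s$ and $t$ are non-adjacent, Lemma~\ref{lem:connectivity_pairs_with_st_edge} tells me that $(k,l-m)$ is a connectivity pair for $s$ and $t$ in $G'$.

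The central observation is that each deleted edge is itself an $s$-$t$ path of length one. These $m$ trivial paths are pairwise edge-disjoint, carry no internal vertices, and use only edges of $E(s,t)$; consequently they are both edge-disjoint from and internally disjoint with every $s$-$t$ path living in $G'$. So starting from any family of edge-disjoint $s$-$t$ paths in $G'$ together with a distinguished internally disjoint subfamily, adjoining the $m$ trivial paths raises both counts by exactly $m$ while preserving edge-disjointness and internal disjointness.

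In the main case $l-m\geq 1$ I would apply the non-adjacent hypothesis to $G'$, obtaining $k+(l-m)$ edge-disjoint $s$-$t$ paths of which $k+1$ are internally disjoint, and then adjoin the $m$ trivial paths to reach $k+(l-m)+m=k+l$ edge-disjoint paths in $G$, still with $k+1$ internally disjoint among them. The only delicate point, and the step I expect to be the main obstacle, is the boundary case $l=m$: there $(k,0)$ is a connectivity pair in $G'$, where the hypothesis does not apply since it requires a positive second coordinate. Here I would instead appeal to Observation~\ref{obs:bineke_harary_l=1_or_k=0}\ref{obs:l=0} to extract $k$ internally disjoint $s$-$t$ paths in $G'$, and adjoin the $m=l\geq 1$ trivial paths, yielding $k+l$ edge-disjoint paths, all $k+l\geq k+1$ of which are internally disjoint. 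The argument hinges precisely on this boundary case being covered, and it is exactly the assumption $l\geq 1$ that furnishes the one extra trivial path needed to certify the required $k+1$ internally disjoint paths.
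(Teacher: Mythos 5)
Your proposal is correct and follows essentially the same route as the paper's proof: pass to $G-E(s,t)$, invoke Lemma~\ref{lem:connectivity_pairs_with_st_edge} to get the connectivity pair $(k,l-\card{E(s,t)})$, and reinsert the deleted edges as trivial $s$-$t$ paths that are edge-disjoint and internally disjoint from everything else. The only difference is presentational: you make the boundary case $l=\card{E(s,t)}$ an explicit case handled via Observation~\ref{obs:bineke_harary_l=1_or_k=0}, whereas the paper treats it implicitly by only claiming $k$ internally disjoint paths in $G-E(s,t)$ and recovering the $(k+1)$-st from the reinserted edges.
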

\begin{proof}
	Assume the Beineke-Harary-Conjecture holds for all graphs
	$G^\prime\in\mathcal{G}$ and all vertices $s,t\in V(G^\prime)$ with
	$\card{E(s,t)}=0$.	Let $G\in\mathcal{G}$ be a graph, $s,t\in V(G)$ distinct
	vertices with $\card{E(s,t)}\geq 1$, and let $(k,l)$ be a connectivity pair for
	$s$ and $t$ in $G$. By Lemma~\ref{lem:connectivity_pairs_with_st_edge},
	$(k,l-\card{E(s,t)})$ is a connectivity pair for $s$ and $t$ in $G-E(s,t)$.
	Thus, by assumption there exist $k+l-\card{E(s,t)}$ edge-disjoint $s$-$t$ paths
	of which at least $k$ are internally disjoint in $G-E(s,t)$. Note that
	we cannot assume that $k+1$ paths are internally disjoint, as
	$l-\card{E(s,t)}=0$ is a possibility. Nevertheless, the $k+l-\card{E(s,t)}$
	paths together with the edges in $E(s,t)$ yield $k+l$ edge-disjoint $s$-$t$
	paths of which at least $k+1$ are internally disjoint, as the edges in
	$E(s,t)$ are internally disjoint to all $s$-$t$ paths and by assumption
	$\card{E(s,t)}\geq 1$.
\end{proof}

Other than these simple observation the only meaningful result on the 
Beineke-Harary-Conjecture to date is due to Enomoto and Kaneko. Their result
from~\cite{enomoto1994} implies the correctness for further base cases regarding
the integers $k$ and $l$. We mention one explicit choice as a corollary, as we
make use of the statement later on.
\begin{theorem}[Enomoto and Kaneko\cite{enomoto1994}]\label{thm:enomoto_kaneko}
	Let $q$, $r$, $k$ and $l$ be integers with $k\geq 0$ and $l\geq 1$ such 
	that $k+l=q(k+1)+r$, $1\leq r\leq k+1$, and let $s$ and $t$ be distinct 
	vertices of a graph~$G$. If $q+r>k$ and if $(k,l)$ is a connectivity pair 
	for $s$ and $t$, then $G$ contains $k+l$ edge-disjoint $s$-$t$ paths of 
	which $k+1$ are internally disjoint. \qed
\end{theorem}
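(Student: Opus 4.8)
The conclusion makes two assertions at once: that $k+1$ internally disjoint $s$-$t$ paths exist, and that they sit inside a family of $k+l$ edge-disjoint $s$-$t$ paths. The internally disjoint half is the easy one. Applying Property~\ref{prop2_connectivity_pair} to a pure vertex-separator $(S,\emptyset)$ shows that $s$ and $t$ admit no separator of size at most $k$: such a set would be an $s$-$t$ disconnecting pair of order at most $k$, size $0\leq l$ and cardinality $\card{S}\leq k<k+l$. Hence $\kappa_G(s,t)\geq k+1$, and Menger's Theorem (Theorem~\ref{thm:menger_vertex}, or Theorem~\ref{thm:menger_variant_adjacent_vertices} when $s$ and $t$ are adjacent) already produces $k+1$ internally disjoint paths. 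The same reading, applied to a pair $(S,F')$ with $\card{S}=j\leq k$, bounds the local edge-connectivity of $G-S$ from below: the pair is not forbidden only if $\card{F'}>l$ or its cardinality reaches $k+l$, so every edge-separator of $G-S$ has size at least $\min\{l+1,\,k+l-j\}$, i.e. at least $l+1$ when $j<k$ and at least $l$ when $j=k$.

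The real work is to promote these bounds into a single coherent family of $k+l$ edge-disjoint paths built around one bundle of $k+1$ internally disjoint paths. The plan is to strengthen the previous estimate to the claim that $\lambda(G-S)\geq k+l-\card{S}$ for every $S\subseteq V(G)\setminus\{s,t\}$ with $\card{S}\leq k$, writing $\lambda(G-S)$ for the maximum number of edge-disjoint $s$-$t$ paths in $G-S$; I refer to this claim as $(\ast)$. Granting $(\ast)$, the decomposition $k+l=q(k+1)+r$ suggests assembling the family as $q$ edge-disjoint unions of $k+1$ internally disjoint paths together with $r$ additional edge-disjoint paths. For the $q$ bundles one invokes the Egawa--Kaneko--Matsumoto theorem~\cite{EKM91}: there are $q$ edge-disjoint unions of $k+1$ internally disjoint $s$-$t$ paths if and only if $\lambda(G-S)\geq q(k+1-\card{S})$ for every $S$ with $\card{S}\leq k$. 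These inequalities follow from $(\ast)$, because $k+l-j-q(k+1-j)=r+j(q-1)\geq 0$ for all $0\leq j\leq k$ whenever $q\geq 1$ (and $q=0$ forces $l=1$, a case already settled by Observation~\ref{obs:bineke_harary_l=1_or_k=0}). One such bundle furnishes the $k+1$ internally disjoint paths, the $q$ bundles use $q(k+1)$ edge-disjoint paths in total, and $(\ast)$ at $S=\emptyset$ gives $\lambda(G)\geq q(k+1)+r$, which should leave room for the remaining $r$ paths.

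The crux, and the step I expect to resist a short argument, is $(\ast)$ itself, most visibly its instance $\lambda(G)\geq k+l$. Property~\ref{prop2_connectivity_pair} constrains only disconnecting pairs of order at most $k$ and size at most $l$, and on its own yields no more than $\lambda(G)\geq l+1$; the missing $k-1$ units of edge-connectivity are exactly what the hypothesis must supply, and indeed $q+r>k$ is equivalent to $qk<l$, which is the inequality that has to enter the construction. I would argue $(\ast)$ by contradiction: supposing $\lambda(G-S)<k+l-\card{S}$, take a minimum edge-separator $F^*$ of $G-S$ and convert some of its edges into incident vertices, paying one vertex for the several separator-edges that vertex covers, so as to fabricate an $s$-$t$ disconnecting pair of order at most $k$, size at most $l$ and cardinality strictly below $k+l$ --- contradicting that $(k,l)$ is a connectivity pair. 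Whether this trade is possible --- whether the cut carries enough high-degree vertices to absorb the surplus edges without the order exceeding $k$ or the size exceeding $l$ --- is precisely where $qk<l$ is needed. The last delicate point is the bookkeeping that preserves one bundle as $k+1$ internally disjoint paths while the extra $r$ paths are threaded in edge-disjointly, so that the $k+1$ internally disjoint and the $k+l$ edge-disjoint paths emerge as one family rather than as two unrelated applications of Menger's Theorem.
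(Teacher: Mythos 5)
First, a point of reference: the paper itself contains \emph{no} proof of Theorem~\ref{thm:enomoto_kaneko}; it is quoted from Enomoto and Kaneko~\cite{enomoto1994} and used as a black box (hence the bare \qed). So your attempt is competing with the original, quite involved, proof in that reference, not with anything in this paper. Much of your proposal is sound. The bound $\kappa_G(s,t)\geq k+1$ and the $k+1$ internally disjoint paths are fine. Your claim $(\ast)$, that $\lambda_{G-S}(s,t)\geq k+l-\card{S}$ for every $S\subseteq V(G)\setminus\{s,t\}$ with $\card{S}\leq k$, is true, and the edge-to-vertex substitution you sketch closes with no further hypothesis: if $F^*$ is a minimum edge-separator of $G-S$ with $\card{F^*}<k+l-\card{S}$, then the edges of $F^*$ joining $s$ and $t$ directly number at most $l$ (they lie in every $s$-$t$ disconnecting pair, and one of size $l$ exists), every other edge has an endvertex outside $\{s,t\}\cup S$, and replacing such endvertices one at a time raises the order by one while lowering the size by at least one; since the surplus $\card{F^*}-l$ is strictly less than $k-\card{S}$, at most $k-\card{S}-1$ substitutions produce a disconnecting pair of order at most $k-1$, size at most $l$, and cardinality less than $k+l$, contradicting the definition of a connectivity pair. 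In particular, your suspicion that $q+r>k$ (equivalently $qk<l$) is what makes this trade possible is mistaken: $(\ast)$ holds for \emph{every} connectivity pair. Your arithmetic $k+l-j-q(k+1-j)=r+j(q-1)\geq 0$ is also correct, so the theorem of~\cite{EKM91} does yield $q$ edge-disjoint unions of $k+1$ internally disjoint $s$-$t$ paths (and $q=0$ forces $l=1$, covered by Observation~\ref{obs:bineke_harary_l=1_or_k=0}).

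The genuine gap is the final assembly, which you defer as ``bookkeeping.'' Having fixed the family $\mathcal{F}$ of $q(k+1)$ edge-disjoint paths produced via~\cite{EKM91}, you need $r$ further $s$-$t$ paths edge-disjoint from all of them, i.e. $\lambda_{G-E(\mathcal{F})}(s,t)\geq r$. The inequality $\lambda_G(s,t)\geq q(k+1)+r$ does \emph{not} give this: it asserts that \emph{some} family of $k+l$ edge-disjoint paths exists, not that your particular family $\mathcal{F}$ extends to one. For instance, if $G$ has an edge cut of size exactly $k+l$ and some path of $\mathcal{F}$ crosses it more than once, then fewer than $r$ cut edges remain and no extension of this $\mathcal{F}$ exists; the skeins must therefore be chosen with care, and some hypothesis must guarantee that such a choice is possible. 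Indeed, since $(\ast)$ and the application of~\cite{EKM91} go through for every connectivity pair with $l\geq 2$, if your extension step were valid unconditionally your argument would prove the full Beineke-Harary-Conjecture, which is open (this paper settles only $l=2$ and treewidth at most $3$). So the hypothesis $q+r>k$ must be consumed precisely in that extension step, and your proposal contains no argument there. That step is not bookkeeping; it is the substance of Enomoto and Kaneko's theorem, and this is exactly the ``simultaneity'' difficulty the paper emphasizes when explaining why the conjecture does not follow from separate applications of Menger's Theorem.
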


\begin{corollary}\label{cor::enomoto_kaneko_k=1}
	Let $(1,l)$ be a connectivity pair for two distinct vertices $s$ and $t$ of 
	a graph~$G$, then there are $l+1$ edge-disjoint $s$-$t$ paths of which two 
	are internally disjoint.
\end{corollary}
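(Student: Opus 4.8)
The plan is to obtain this corollary as a direct instance of Theorem~\ref{thm:enomoto_kaneko} specialized to $k=1$, so the entire argument reduces to checking that the hypotheses of that theorem are satisfied in this case. Since the statement concerns a connectivity pair $(1,l)$ in the setting of the conjecture, I would work under the convention $l\geq 1$.

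First I would pin down the decomposition demanded by the theorem. With $k=1$ we have $k+l=l+1$ and $k+1=2$, so the requirement is to write $l+1=2q+r$ with $1\leq r\leq 2$. This balanced division is uniquely determined by the parity of $l$: if $l$ is even (and hence $l\geq 2$), then $r=1$ and $q=l/2$; if $l$ is odd, then $r=2$ and $q=(l-1)/2$. In either case $q$ is a non-negative integer and $r\in\{1,2\}=\{1,\dots,k+1\}$, which is exactly the admissible range for $r$.

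Next I would verify the crucial side condition $q+r>k$, which here reads $q+r>1$. In the even case $q+r=l/2+1\geq 2$ since $l\geq 2$, and in the odd case $q+r=(l-1)/2+2=(l+3)/2\geq 2$ since $l\geq 1$. Thus $q+r\geq 2>1$ for every admissible $l$, so the inequality $q+r>k$ holds throughout.

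Having confirmed that $k=1$, $l\geq 1$, the decomposition $k+l=q(k+1)+r$, the bound $1\leq r\leq k+1$, and the inequality $q+r>k$ are all met, I would simply apply Theorem~\ref{thm:enomoto_kaneko}. It then yields $k+l=l+1$ edge-disjoint $s$-$t$ paths of which $k+1=2$ are internally disjoint, which is precisely the assertion of the corollary. The only step requiring any care is the parity case distinction used to exhibit $q$ and $r$ and to check $q+r>k$; beyond that bookkeeping the result is immediate, so I anticipate no substantive obstacle.
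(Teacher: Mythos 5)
Your proposal is correct and takes essentially the same route as the paper: both reduce the corollary to Theorem~\ref{thm:enomoto_kaneko} via the (unique) decomposition $l+1 = 2q+r$ with $1\leq r\leq 2$ and the check $q+r>1$. The only minor difference is that the paper dispatches $l=1$ separately through Observation~\ref{obs:bineke_harary_l=1_or_k=0} and invokes the theorem only for $l\geq 2$, whereas you cover $l=1$ inside the theorem with $q=0$, $r=2$; since the theorem as stated allows $q=0$ and then $q+r=2>1$, this uniform treatment is equally valid.
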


\begin{proof}
	For $l=1$ the statement holds due to Observation 
	\ref{obs:bineke_harary_l=1_or_k=0}. For $l\geq 2$ and
	$q,r\in\mathbb{N}$ with $1+l = q\cdot 2 + r$ and $1\leq r\leq 2$ we have
	$q+r>1$ and by Theorem~\ref{thm:enomoto_kaneko} we get the desired paths.
\end{proof}

Before we turn to the proof of the Beineke-Harary-Conjecture for $l=2$, we
discuss an erroneous claim made by Sadeghi and Fan in~\cite{sadeghi2019}. This
serves to illustrate the difficulties when trying to prove
Beineke-Harary-Conjecture and further shows why the conjecture does not claim
equivalence of the existence of connectivity pairs and paths. The statement by
Sadeghi and Fan is the following:

\emph{When $V(G)\geq k+l+1$, $k\geq 0$ and $l\geq 1$, a graph $G$ has $k+l$
		edge-disjoint paths of which $k+1$ are internally disjoint between any
		two vertices, if and only if the graph cannot be disconnected by removing $k$
		vertices and $l-1$ edges.}

\begin{figure}[t]
	\centering
	\begin{minipage}{.49\textwidth}
		\centering
		\begin{tikzpicture}[node distance=\nodedistance]
			\node[vertex, color=color2, fill=color2] (1) {};
			\node[vertex,above of=1, xshift=-\nodedistance] (2) {};
			\node[vertex, left of=1, xshift=-\nodedistance] (3) {};
			\node[vertex, below of=1, xshift=-\nodedistance] (4) {};
			\node[vertex, above of=1, xshift=\nodedistance] (5) {};
			\node[vertex, right of=1, xshift=\nodedistance] (6) {};
			\node[vertex, below of=1, xshift=\nodedistance] (7) {};
			
			\node[below of=1, yshift=.5\nodedistance] (1l) {$x_1$};
			\node[above of=2, yshift=-.5\nodedistance] (2l) {$x_2$};
			\node[left of=3, xshift=.5\nodedistance] (3l) {$x_3$};
			\node[below of=4, yshift=.5\nodedistance] (4l) {$x_4$};
			\node[above of=5, yshift=-.5\nodedistance] (5l) {$x_5$};
			\node[right of=6, xshift=-.5\nodedistance] (1l) {$x_6$};
			\node[below of=7, yshift=.5\nodedistance] (7l) {$x_7$};
			
			\foreach \x/\y in {1/2, 1/3, 1/4, 1/5, 1/6, 1/7, 2/3, 2/4, 3/4, 5/6, 5/7,
				6/7} {
				\drawEdge{\x}{\y}
			}
			\drawEdgeCol{2}{5}{color2}
		\end{tikzpicture}
	\end{minipage}
	\begin{minipage}{.49\textwidth}
		\centering
		\begin{tikzpicture}[node distance=\nodedistance]
			\node[vertex] (1) {};
			\node[vertex, above of=1, xshift=-\nodedistance] (2) {};
			\node[vertex, color=color4, fill=color4, left of=1, xshift=-\nodedistance]
			(3) {};
			\node[vertex, below of=1, xshift=-\nodedistance] (4) {};
			\node[vertex, above of=1, xshift=\nodedistance] (5) {};
			\node[vertex, right of=1, xshift=\nodedistance] (6) {};
			\node[vertex, color=color4, fill=color4, below of=1, xshift=\nodedistance]
			(7) {};
			
			\node[below of=1, yshift=.5\nodedistance] (1l) {$x_1$};
			\node[above of=2, yshift=-.5\nodedistance] (2l) {$x_2$};
			\node[left of=3, xshift=.5\nodedistance] (3l) {$x_3$};
			\node[below of=4, yshift=.5\nodedistance] (4l) {$x_4$};
			\node[above of=5, yshift=-.5\nodedistance] (5l) {$x_5$};
			\node[right of=6, xshift=-.5\nodedistance] (1l) {$x_6$};
			\node[below of=7, yshift=.5\nodedistance] (7l) {$x_7$};
			
			\foreach \x/\y in {1/2, 1/5, 1/6, 1/7, 2/4, 5/6} {
				\drawEdge{\x}{\y}
			}
			\foreach \x/\y in {3/2, 2/5, 5/7} { \drawEdgeCol{\x}{\y}{color1} }
			\foreach \x/\y in {3/1,1/7} {\drawEdgeCol{\x}{\y}{color3}}
			\foreach \x/\y in {3/4, 4/1, 1/6, 6/7}{\drawEdgeCol{\x}{\y}{color5}}
		\end{tikzpicture}
	\end{minipage}
	\caption{A graph containing a vertex-edge separator, such that between any pair
		of
		vertices there exist three edge-disjoint paths of which two are internally
		disjoint.}
	\label{fig:counter}
\end{figure}
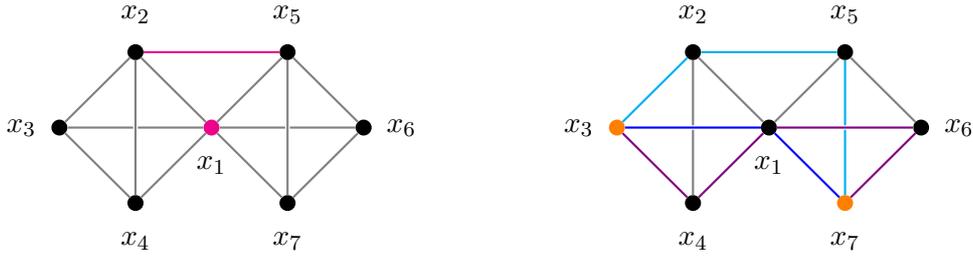

In~\cite{sadeghi2019} for integers $k,l\geq 1$ a graph $G$ with at least $k+l+1$
vertices is called \emph{$(k,l)$-connected} if it cannot be disconnected by
removing $k$~vertices and $l-1$~ edges. The following claim is then made.
\begin{align}
	\label{claim:sadeghi_1}
	\begin{minipage}{.9\textwidth}
		Let $k,l\geq 1$ and $G$ be a graph with at least
		$k+l+1$~vertices. Then~$G$ is $(k,l)$-connected if and only if $G$ is
		$k+1$~vertex-connected and $k+l$~edge-connected.
	\end{minipage}
\end{align}
If $G$ is in fact $(k,l)$-connected it can readily be observed that it is also
$k+1$ vertex-connected and $k+l$ edge-connected. On the other hand $G$ being
$k+1$ vertex-connected and $k+l$ edge-connected does \emph{not} imply
$(k,l)$-connectivity.  To see this, consider the two complete graphs $G_1$ and
$G_2$ on the vertex sets $\{x_1,x_2,x_3,x_4\}$ and $\{x_1, x_5, x_6, x_7\}$. We
construct a graph~$G$ by regarding the union of $G_1$ and $G_2$ and
additionally adding an edge between vertices~$x_5$ and~$x_2$.
Figure~\ref{fig:counter} displays the constructed graph. The graph $G$ is
$2$-vertex-connected and $3$-edge-connected, but it is not $(1,2)$-connected as
the removal of the vertex~$x_1$ and the edge $x_2x_5$ disconnects the graph.
Thus, the Claim~\eqref{claim:sadeghi_1} cannot hold. As a corollary
of Claim~\eqref{claim:sadeghi_1}, Sadeghi and Fan state the following.
\begin{align}
	\label{claim:sadeghi_2}  
	\begin{minipage}{.9\textwidth}
		Let $k\geq 0$, $l\geq 1$, and $G$ be a graph with
		at least $k+l+1$~vertices. Then $G$ is $(k,l)$-connected if and only if it has
		$k+l$ edge-disjoint paths between every pair of vertices of which $k+1$~paths
		are internally disjoint.
	\end{minipage}
\end{align}
As a corollary to Claim~\eqref{claim:sadeghi_1},
Claim~\eqref{claim:sadeghi_2} cannot be considered proven. We give a
counterexample to the claim in Proposition~\ref{prop:counter}.

In the original conjecture by Beineke and Harary~\cite{B01} and in the
extension due to~\cite{enomoto1994} it is never claimed that the existence of the
desired paths is sufficient for $(k,l)$-connectivity and, in fact, it is
not. For the sake of completeness we argue why the existence of the paths in
Claim~\eqref{claim:sadeghi_2} is not sufficient.

\begin{proposition}
	\label{prop:counter} The graph $G$ constructed above contains a separator
	of one vertex and one edge and between any pair of vertices there exist three
	edge-disjoint paths of which two are internally disjoint.
\end{proposition}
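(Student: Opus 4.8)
The plan is to check the two assertions separately, the first being immediate and the second reducing, via the symmetry of $G$, to a handful of explicit path systems. For the separator I would observe that deleting the vertex $x_1$ together with the single cross edge $x_2x_5$ leaves exactly the two triangles on $\{x_2,x_3,x_4\}$ and on $\{x_5,x_6,x_7\}$ (the edges of $G_1$ and $G_2$ not incident to $x_1$), with no edge between them; hence $(\{x_1\},\{x_2x_5\})$ is an $x_3$-$x_6$ disconnecting pair of order $1$ and size $1$, i.e.\ a separator consisting of one vertex and one edge.

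For the path assertion I would first note that the maps interchanging the two copies ($x_2\leftrightarrow x_5$, $x_3\leftrightarrow x_6$, $x_4\leftrightarrow x_7$, with $x_1$ fixed) and those transposing $x_3,x_4$ or $x_6,x_7$ are automorphisms of $G$ fixing the cross edge, so up to symmetry every pair is equivalent to one in a short list. Pairs lying in a common copy $G_i\cong K_4$ need no work: one directly writes down three internally disjoint paths between them (the joining edge and the two detours through the remaining two vertices of $G_i$), and these are in particular edge-disjoint, with any two of them internally disjoint, which is stronger than required.

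The substance is therefore in the cross pairs, of which the automorphisms leave three representatives: $\{x_2,x_5\}$, $\{x_2,x_6\}$ and $\{x_3,x_6\}$. For each I would produce two internally disjoint paths, one routed through the hub $x_1$ and one through the cross edge $x_2x_5$, together with a third path that is edge-disjoint from both and routed through the remaining inner vertices and a second edge incident to $x_1$; for $\{x_3,x_6\}$, for instance, one takes $x_3x_1x_6$, $x_3x_2x_5x_6$ and $x_3x_4x_1x_7x_6$. The step I expect to need the most care is this third path: it must dodge every edge already used while the first two stay internally disjoint. This is comfortably possible because the hub $x_1$ has degree six and so can carry two of the paths on disjoint pairs of its incident edges, while the inner vertices $x_3,x_4$ and $x_6,x_7$ supply the alternate routing; it then only remains to verify edge-disjointness of the three displayed triples in each case. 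Together with the common-copy pairs this shows that every pair of vertices admits three edge-disjoint paths, two of which are internally disjoint.
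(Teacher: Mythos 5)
Your proposal is correct and follows essentially the same argument as the paper: the same disconnecting pair $(\{x_1\},\{x_2x_5\})$, the trivial handling of pairs inside one $K_4$, and for cross pairs the same three paths (one via the hub $x_1$, one via the cross edge $x_2x_5$, and a detour through the remaining inner vertices and $x_1$, exactly the paper's $Q$, $P$ and $R$). The only difference is cosmetic: you reduce to three representative cross pairs via automorphisms, whereas the paper writes one parametrized construction covering all cross pairs at once.
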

\begin{proof}
	Consider the graph $G$ above, that also provided a counterexample to
Claim~\eqref{claim:sadeghi_1}, see Figure~\ref{fig:counter}. The vertex $x_1$
together with the edge $x_5x_2$ disconnects the graph. Now let $v_1,v_2\in
V(G)$. If $v_1,v_2\in V(G_i)$ for some $i\in\{1,2\}$, then there are three internally
disjoint $v_1$-$v_2$ paths. Otherwise, without loss of generality $v_1\in
\{x_2,x_3,x_4\}$ and $v_2\in\{x_5,x_6,x_7\}$. Denote by $P_1$ a shortest path
from $v_1$ to $x_2$ (This is either a single edge or the path without edges)
and by $P_2$ a shortest path from~$x_5$ to~$v_2$. We define the $v_1$-$v_2$
path $P\coloneqq (P_1\cup P_2)+x_2x_5$.  Further, let $Q=v_1x_1v_2$. Finally, let
$w_1\in\{x_3,x_4\}\setminus\{v_1\}$ and $w_2\in\{x_6,x_7\}\setminus\{v_2\}$ and
define the path $R=v_1w_1x_1w_2v_2$. It is easily verified that $P$, $Q$, $R$
are three edge-disjoint $v_1$-$v_2$ paths and $P$ and $Q$ are also internally
disjoint.
\end{proof}

The graph in Figure~\ref{fig:counter} illustrates two things. On the one
hand it shows that we may not hope to prove an equivalence in the fashion of
Claim~\eqref{claim:sadeghi_2}. On the other hand it shows that it is not
possible to replace the mixed form of connectivity by two separate statements on
pure connectivity in the fashion of Claim~\eqref{claim:sadeghi_1}. This is
one of the reasons why the Beineke-Harary-Conjecture is not a consequence of
Menger's Theorem and its proof has not been established as of yet. It also
suggests that the usual techniques used for proofs of Menger's Theorem might not
transfer to the mixed statement. In the following we use a novel technique for
proving the Beineke-Harary-Conjecture for the case that $l=2$. The idea is to
keep the desired $k+1$ internally disjoint paths and move from $s$ to $t$
along the remaining path. The statement is then proved by induction.

\section{The Beineke-Harary-Conjecture for Disconnecting Pairs of Size 2}
\label{sec:main_result}
Now that we have established some foundations for connectivity pairs and the
Beineke-Harary-Conjecture, we turn to the main result of this contribution. We
prove that the Beineke-Harary-Conjecture holds for all non-negative
integers $k$ if $l=2$.

\begin{theorem}
	\label{the:main_result}
	Let $G$ be a graph and $s,t\in V(G)$. Further, let $(k,2)$ be a connectivity 
	pair for $s$ and $t$. Then, there exist $k+2$ edge-disjoint $s$-$t$ paths 
	of which $k+1$ are internally disjoint.	
\end{theorem}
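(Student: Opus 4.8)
The plan is to reduce to a single essential case and then run an inductive augmentation. By Lemma~\ref{lem:conjecture_non_adjacent_suffices}, applied to the class of all graphs (which is closed under edge deletion), I may assume that $s$ and $t$ are non-adjacent. The boundary cases are already settled elsewhere: $k=0$ by Observation~\ref{obs:bineke_harary_l=1_or_k=0} and $k=1$ by Corollary~\ref{cor::enomoto_kaneko_k=1}, so I would assume $k\ge 2$ from now on.

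Next I would extract two numerical bounds from the defining properties of the connectivity pair, using throughout that an edge of a disconnecting pair not incident to $s$ or $t$ may be traded for one of its endpoints (and conversely). On the vertex side, Property~\ref{prop2_connectivity_pair} forbids every disconnecting pair of order at most $k$ and cardinality below $k+2$; in particular there is no vertex separator of size at most $k$, whence $\kappa(s,t)\ge k+1$, while replacing the two edges of the given order-$k$, size-$2$ pair by endpoints yields a separator of size at most $k+2$, so $\kappa(s,t)\le k+2$. On the edge side, starting from a minimum edge separator $F$ with $\lambda\coloneqq\lambda(s,t)$ and replacing all but two of its edges by endpoints produces a disconnecting pair of order at most $\lambda-2$ and size $2$; if we had $\lambda\le k+1$ this pair would have order at most $k-1$ and cardinality at most $\lambda<k+2$, contradicting Property~\ref{prop2_connectivity_pair}. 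Hence $\lambda(s,t)\ge k+2$. If $\kappa(s,t)=k+2$, then the vertex form of Menger's Theorem (Theorem~\ref{thm:mengers_theorem}) already supplies $k+2$ internally disjoint $s$-$t$ paths, which are in particular $k+2$ edge-disjoint paths any $k+1$ of which are internally disjoint, and the theorem follows.

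It remains to treat $\kappa(s,t)=k+1$. Here I would fix $k+1$ internally disjoint $s$-$t$ paths $P_1,\dots,P_{k+1}$ (which exist by Menger) and search for one further $s$-$t$ path edge-disjoint from all of them; together these give the desired configuration. Following the announced strategy, I would construct the extra path incrementally, \emph{keeping} the skein $P_1,\dots,P_{k+1}$ but permitting local rerouting, and argue by induction (for instance on $\card{V(G)}$, after contracting the first edge of the partial extra path so as to push the source toward $t$). The engine of the induction is a frontier argument: if at some stage the skein's edges disconnect $s$ from $t$, let $C$ be the vertex set of the component of $s$ in $G-E(P_1\cup\dots\cup P_{k+1})$. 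Then every edge of the cut $(C,V(G)\setminus C)$ lies in the skein, and there are at least $\lambda(s,t)\ge k+2$ of them; since there are only $k+1$ paths and each crosses this cut an odd number of times, some $P_i$ crosses it at least three times, and the resulting excursion into $V(G)\setminus C$ can be rerouted to liberate a boundary edge for the extra path.

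The hard part will be exactly this rerouting step. I must ensure that adjusting $P_i$ — and resolving any new collisions it creates with the other paths — leaves $k+1$ genuinely internally disjoint paths, while the partial extra path advances under a well-chosen monotone progress measure so that the induction terminates at $t$. Equally delicate is verifying that the connectivity-pair hypothesis is inherited by the smaller instance produced by the contraction, so that the bounds $\kappa\ge k+1$ and $\lambda\ge k+2$ persist at every step. By contrast, the reductions of the first paragraph and the two bounds derived in the second are comparatively routine, and I expect the entire novelty of the proof to sit in controlling the skein under rerouting.
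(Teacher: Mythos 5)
Your reductions are sound and match what the paper itself does when deriving the result from its key lemma: non-adjacency via Lemma~\ref{lem:conjecture_non_adjacent_suffices}, the base cases $k\le 1$, the bounds $\kappa(s,t)\ge k+1$ and $\lambda(s,t)\ge k+2$ obtained by trading edges for endpoints, and the easy case $\kappa(s,t)=k+2$ via Theorem~\ref{thm:mengers_theorem}. But everything after that is a sketch, not a proof, and the part you defer --- the rerouting step and the inheritance of hypotheses by the smaller instance --- is precisely where the entire content of the theorem lies. Your frontier argument correctly shows that some skein path $P_i$ crosses the cut $E(C,V(G)\setminus C)$ at least three times, but ``the resulting excursion can be rerouted to liberate a boundary edge'' is exactly the step with no argument behind it: rerouting $P_i$ through $C$ must avoid the \emph{internal vertices} of the other $k$ paths, not just their edges, and nothing in your setup guarantees such a vertex-avoiding detour exists. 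Resolving the cascade of collisions this creates, under a progress measure that terminates, is the open heart of the problem, and you acknowledge rather than close it.

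The second gap is structural. You propose induction on $\card{V(G)}$ via contracting the first edge of the partial extra path, but the hypothesis ``$(k,2)$ is a connectivity pair'' is in general \emph{not} inherited by the contracted graph (contraction can create $s$-$t$ adjacencies, merge separator vertices, and change both $\kappa$ and $\lambda$ uncontrollably), so the bounds you derived do not ``persist at every step'' without further argument. The paper's solution to exactly this difficulty is its Theorem~\ref{the:main}: a generalization with a second source $s_2$, whose third hypothesis (no $\{s_1,s_2\}$-$t$ disconnecting pair of cardinality $k+1$ and order at most $k$) is deliberately weaker than the connectivity-pair condition and is shown to be inheritable under deletion of a single edge $s_2s_2^\prime$ (not contraction), with a two-case analysis according to whether $s_2$ lies on an \skein{s_1}{t}{(k+1)}. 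Finding that inductively stable statement is the novel idea of the paper; your proposal identifies where such a statement is needed but does not supply one, so the proof is incomplete.
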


Before we begin with the proof, note that the result of
Theorem~\ref{the:main_result} has only been proved for $k=1$. In particular, the
result of Enomoto and Kaneko, \textit{\textit{cf.}}~Theorem~\ref{thm:enomoto_kaneko},
basically tackles the conjecture from a different angle: In their statement for
$k\geq 2$ and $l=2$, the sum $q+r$ always equals $2$, which leads to a large gap
between $k$ and $q+r$ for large $k$.

The idea of the proof of Theorem~\ref{the:main_result} for vertices $s_1$ and
$t$ is to always keep an \skein{s_1}{t}{(k+1)} and inductively move along some
other $s_1$-$t$ path $P$ which is edge-disjoint to the currently regarded
\skein{s}{t}{(k+1)}.
In order to use induction we generalize the claim of Theorem~\ref{the:main_result} and expand in the current step an $s_2$-$t$ path that is edge-disjoint to the currently regarded \skein{s_1}{t}{(k+1)} until we obtain $P$ in the end.


\begin{samepage}
\begin{theorem}
	\label{the:main}
	Let $G$ be a graph, $s_1,s_2,t\in V(G)$ with $s_1\neq t$. Further, assume 
	that 
	\begin{enumerate}
		\item there exists an $s_2$-$t$ path in $G$,\label{prop:1}
		\item there exists an \skein{s_1}{t}{(k+1)} in $G$,
		and \label{prop:2}
		\item there is no $\{s_1,s_2\}$-t disconnecting pair of cardinality 
		$k+1$ and order at most $k$ in~$G$.\label{prop:3}
	\end{enumerate}
	Then, there exist $k+2$ edge-disjoint paths, of which $k+1$ are internally 
	disjoint $s_1$-$t$ paths and one is an $s_2$-$t$ path.
\end{theorem}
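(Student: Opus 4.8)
The plan is to prove the general statement by induction, building the extra $s_2$-$t$ path one edge at a time while carrying the $(k+1)$-skein along. Concretely, I would induct on the length (number of edges) of a shortest $s_2$-$t$ path $P = s_2, v, \dots, t$, whose existence is exactly hypothesis~(i). Throughout, I keep a $(k+1)$-skein $\mathcal{S}$ from $s_1$ to $t$ (hypothesis~(ii)) and aim to produce an $s_2$-$t$ path edge-disjoint from $\mathcal{S}$; the translation between skeins and disconnecting pairs via Menger's Theorem, specifically Theorem~\ref{thm:menger_variant_adjacent_vertices}, serves as the basic dictionary. Before starting the induction I would dispose of degenerate situations (an $s_2$-$t$ path forces $s_2\neq t$, and the case where $s_2$ is adjacent to $t$ becomes the base case) and, where convenient, reduce to non-adjacent configurations in the spirit of Lemma~\ref{lem:conjecture_non_adjacent_suffices}.

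For the inductive step I would take the first edge $e = s_2v$ of $P$ and relocate the source from $s_2$ to its neighbor $v$. The suffix $v,\dots,t$ of $P$ is a strictly shorter $v$-$t$ path, so applying the induction hypothesis to the triple $(s_1,v,t)$ in an appropriately reduced graph (such as $G-e$, or $G-s_2$ when $s_2$ must be avoided) yields a $(k+1)$-skein together with an edge-disjoint $v$-$t$ path, to which I prepend $e$ to recover an $s_2$-$t$ path. For this to close up I must guarantee three things: that $e$ is free (used neither by the recursively produced skein nor by its $v$-$t$ path), that $s_2$ is not revisited so that prepending $e$ yields a genuine path, and—most importantly—that hypotheses~(i)--(iii) still hold for $(s_1,v,t)$ in the reduced graph. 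Hypothesis~(i) is immediate from the suffix of $P$, and the induction measure strictly decreases.

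The hard part will be preserving hypotheses~(ii) and~(iii) after committing the edge $e$ (and possibly deleting $s_2$). Deleting $e$ can in principle create a new $\{s_1,v\}$-$t$ disconnecting pair $(W,F)$ of cardinality $k+1$ and order at most $k$, and such a pair need not lift to an $\{s_1,s_2\}$-$t$ pair of the same cardinality in $G$, because reinstating $e$ reconnects $v$ to $t$ only through $s_2$. The resolution I anticipate is a case analysis driven precisely by hypothesis~(iii): since $\{s_1,s_2\}$ cannot be cut from $t$ with cardinality $k+1$ and order at most $k$, any such small cut for $v$ must leave $s_2$ connected to $t$ (otherwise it would already contradict~(iii)), and one checks that this surviving $s_2$-$t$ connection in fact avoids $e$. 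That free $s_2$-$t$ connection, together with the edge $e = s_2v$, is then the route I use either to reroute $\mathcal{S}$ off of $e$ and $s_2$ (in the subcase where $s_2$ or $e$ lies on the skein) or to upgrade $(W,F)$ into a forbidden $\{s_1,s_2\}$-$t$ cut, yielding a contradiction. Controlling how $\mathcal{S}$, the edge $e$, and the vertex $s_2$ overlap—in particular, rerouting a skein path that runs through $s_2$ without destroying internal disjointness—is the technical heart of the argument, and is exactly where the strengthened form of hypothesis~(iii) (a joint bound of cardinality $k+1$ and order at most $k$, rather than a pure vertex or edge bound) is needed.

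Finally, the base case, when $P$ has length one so that $e = s_2t$, is handled by the same rerouting idea used to free the single edge $e$ from $\mathcal{S}$; once $e$ is free, $\mathcal{S}$ together with $e$ gives the desired configuration. Assembling the edges committed while moving $s_2$ along $P$ then delivers $k+1$ internally disjoint $s_1$-$t$ paths together with the edge-disjoint $s_2$-$t$ path, completing the induction.
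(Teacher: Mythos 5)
Your inductive step has a genuine gap: advancing $s_2$ along a \emph{shortest} $s_2$-$t$ path and committing its first edge $e=s_2v$ does not preserve hypothesis~(iii), and neither of your two repair branches can close the hole, because (iii) can genuinely fail for $(s_1,v,t)$ in $G-e$ even though all hypotheses hold for $(s_1,s_2,t)$ in $G$. Take $k=0$ and the graph with vertices $s_1,s_2,v,a,t$ and edges $s_1v$, $s_2v$, $vt$, $s_2a$, $at$. Hypotheses (i)--(iii) hold: $s_2at$ is an $s_2$-$t$ path, $s_1vt$ is a $1$-skein, and no single edge disconnects both $s_1$ and $s_2$ from $t$. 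The path $s_2vt$ is a shortest $s_2$-$t$ path, so your step deletes $e=s_2v$ and recurses on $(s_1,v,t)$ in $G-e$. But there $(\emptyset,\{vt\})$ is an $\{s_1,v\}$-$t$ disconnecting pair of cardinality $k+1=1$ and order $0\le k$, so (iii) fails; in fact the \emph{conclusion} of the theorem is false for $(s_1,v,t)$ in $G-e$, since every $s_1$-$t$ path and every $v$-$t$ path of $G-e$ uses the edge $vt$. Hence no argument whatsoever can make this recursive call succeed. Your repairs fail concretely: the skein $s_1vt$ already avoids $e$ and $s_2$, so ``rerouting'' changes nothing and does not address (iii); and $(\emptyset,\{vt\})$ cannot be ``upgraded'' to a forbidden $\{s_1,s_2\}$-$t$ cut, because $s_2$ stays connected to $t$ via $s_2at$ and enlarging the pair exceeds cardinality $k+1$ --- indeed no forbidden cut exists in $G$ at all. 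Note also that the correct $s_2$-$t$ path in $G$ is $s_2at$, which is not of the form ``$e$ followed by a $v$-$t$ path,'' so for this choice of shortest path your output format cannot even represent the answer.

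The missing idea, which is the crux of the paper's proof, is that the committed edge must be dictated by the skein, not by a shortest path. The paper inducts on $\card{E(G)}$ and distinguishes two cases. If $s_2$ lies on some \skein{s_1}{t}{(k+1)}, the deleted edge is the edge $s_2s_2'$ immediately \emph{following $s_2$ on that skein path}. This is exactly what makes preservation of (ii) and (iii) provable: any violating cut in $G-s_2s_2'$ must place all of its at most $k+1$ elements on the other internally disjoint paths $P_1,\dots,P_k$ (respectively $P_1,\dots,P_k,s_2'P_{k+1}$), hence the initial segment $P_{k+1}s_2$ survives the cut, $s_2$ is trapped on $s_1$'s side, and the cut lifts to a forbidden $\{s_1,s_2\}$-$t$ disconnecting pair in $G$ --- precisely the lifting that is unavailable for your choice of $e$. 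If $s_2$ lies on no skein, the paper deletes nothing at all: it replaces $s_2$ by a nearest skein vertex $s_2'$ in the \emph{same} graph, applies the already-established on-skein case, and then splices a shortest $s_2$-$s_2'$ path onto the returned $s_2'$-$t$ path. In the example above, $s_2$ does lie on a skein, namely $s_1vs_2at$, so the paper's rule deletes $s_2a$ rather than $s_2v$, and the recursion then succeeds, producing $s_1vt$ and $s_2at$.
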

\end{samepage}

\begin{proof}
	Let $G$ be a graph, $s_1,s_2,t\in V(G)$ with $s_1\neq t$ satisfying
	Properties~\ref{prop:1} to~\ref{prop:3}. We prove the
	claim by induction on the number of edges $\lvert E(G)\rvert$. If $\lvert
	E(G)\rvert\leq k$, then there cannot be $k+1$ internally disjoint $s_1$-$t$
	paths, as $s_1\neq t$. Thus, from now on we may assume the following.
	\begin{align}
		\label{gt::thm:bhc_l2_helper_sub1}
		\begin{minipage}{.9\textwidth}
			\textit{Let $G^\prime$ be a graph with $\card{E(G^\prime)}<\card{E(G)}$ and
				vertices $s_1^\prime, s_2^\prime, t^\prime\in V(G^\prime)$ with
				$s_1^\prime\neq
				t^\prime$. If Properties~\ref{prop:1} to~\ref{prop:3} are
				satisfied in $G^\prime$, then there exist $k+2$ edge-disjoint paths of which
				$k+1$ are internally disjoint $s_1^\prime$-$t^\prime$ paths and of
				which
				one is an $s_2^\prime$-$t^\prime$ path.}
		\end{minipage} 
	\end{align}
	We begin by proving the induction step for the case that $s_2$ is contained in
	an \skein{s_1}{t}{(k+1)} and afterwards use this result to prove the induction
	step for the case that $s_2$ is not contained in such a skein.
	
	\noindent\textbf{Case 1:} The vertex $s_2$ is contained in an
	\skein{s_1}{t}{(k+1)}.
	
	If $s_2=t$, then the $k+1$ internally disjoint paths from
	Property~\ref{prop:2} together with the $s_2$-$t$ path $s_2=t$ form the desired
	paths. Thus, we may assume that ${s_2\neq t}$. Denote by $P_1,\dots, P_{k+1}$ the
	$s_1$-$t$ paths of an \skein{s_1}{t}{(k+1)} containing~$s_2$. Without loss of
	generality we may assume $s_2\in V(P_{k+1})$. Denote by $s_2^\prime$ the vertex
	succeeding $s_2$ on $P_{k+1}$, i.\ e.\ $P_{k+1}=s_1\dots s_2s_2^\prime \dots t$, \textit{cf.}~Figure~\ref{gt::fig:stskeins_s2on}.
	Note that $s_1=s_2$ is not forbidden at this point. We now want to use the
	induction hypothesis for $G-s_2s_2^\prime$ and the vertices $s_1$, $s_2^\prime$
	and $t$, \textit{cf.}~Figure~\ref{gt::fig:stsep_s2on} a).
	
	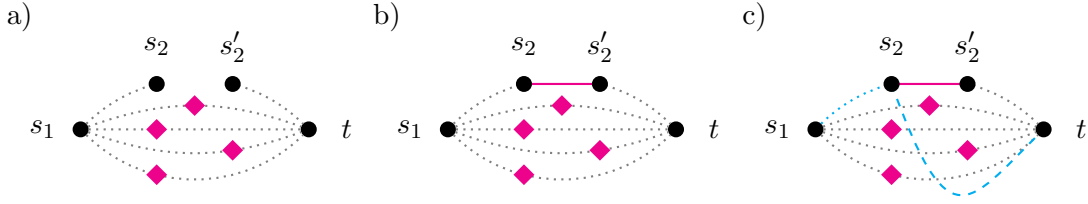
\begin{figure}[ht]
		\begin{minipage}[t]{.32\textwidth}%
			\centering
			\begin{tikzpicture}[node distance=\nodedistance]
				\clip(-1,1.7) rectangle (3.6, -1);
				\node at (-0.8,1.5) (x) {a)};
				
				\node[vertex] (s1) {};
				\labelLeft{s1}{$s_1$};
				\node[vertex, xshift=3\nodedistance] (t) {};
				\labelRight{t}{$t$};
				
				\path[name path= p1] (s1) to[out=45, in=135] (t); 
				
				\coordinate[right of=s1, yshift=\nodedistance] (c1);
				\coordinate[right of=s1, yshift=-\nodedistance] (c2);
				\path[name path=leftmid] (c1) -- (c2);
				\coordinate[left of=t, yshift=\nodedistance] (c3);
				\coordinate[left of=t, yshift=-\nodedistance] (c4);
				\path[name path=rightmid] (c3) -- (c4);
				\path[name intersections={of=leftmid and p1}];
				\coordinate (s2) at (intersection-1);
				\labelAbove{s2}{$s_2$}
				\path[name intersections={of=rightmid and p1}];
				\coordinate (s2p) at (intersection-1);
				\labelAbove{s2p}{$s_2^\prime$}
				\begin{scope}
					\clip(s1)rectangle(s2)(s2p)rectangle(t);
					\draw[edge, dotted] (s1) to[out=45, in=135] (t);
				\end{scope}
				\draw[name path=l1, edge, dotted] (s1) to[out=20, in=160] (t);
				\draw[name path=l2, edge, dotted] (s1) to (t);
				\draw[name path=l3, edge, dotted] (s1) to[out=-20, in=200] (t);
				\draw[name path=l4, edge, dotted] (s1) to[out=-45, in=225] (t);
				\node[vertex] at (s2) (s2v) {};
				\node[vertex] at (s2p) (s2pv) {};
				\coordinate[right of=s1, xshift=.5\nodedistance, yshift=\nodedistance] (m1);
				\coordinate[right of=s1, xshift=.5\nodedistance, yshift=-\nodedistance]
				(m2);
				\path[name path=mid-l1] (m1) -- (m2);
				\path[name path=mid-l2] (c1) -- (c2);
				\path[name path=mid-l3] (c3) -- (c4);
				\path[name path=mid-l4] (c1) -- (c2);
				\foreach \x in {l1, l2, l3, l4} {
					\path[name intersections={of={mid-\x} and \x}];
					\node[diamond, fill=color2,inner sep=2pt] at (intersection-1) (sep-\x) {}; 
				}
				
			\end{tikzpicture}
		\end{minipage}
		\begin{minipage}[t]{.32\textwidth}%
			\centering
			\begin{tikzpicture}[node distance=\nodedistance]
				\clip(-1,1.7) rectangle (3.6, -1);
				\node at (-0.8,1.5) (x) {b)};
				
				\node[vertex] (s1) {};
				\labelLeft{s1}{$s_1$};
				\node[vertex, xshift=3\nodedistance] (t) {};
				\labelRight{t}{$t$};
				
				\path[name path= p1] (s1) to[out=45, in=135] (t); 
				
				\coordinate[right of=s1, yshift=\nodedistance] (c1);
				\coordinate[right of=s1, yshift=-\nodedistance] (c2);
				\path[name path=leftmid] (c1) -- (c2);
				\coordinate[left of=t, yshift=\nodedistance] (c3);
				\coordinate[left of=t, yshift=-\nodedistance] (c4);
				\path[name path=rightmid] (c3) -- (c4);
				\path[name intersections={of=leftmid and p1}];
				\coordinate (s2) at (intersection-1);
				\labelAbove{s2}{$s_2$}
				\path[name intersections={of=rightmid and p1}];
				\coordinate (s2p) at (intersection-1);
				\labelAbove{s2p}{$s_2^\prime$}
				\draw[edge, color2] (s2) to (s2p);
				\begin{scope}
					\clip(s1)rectangle(s2)(s2p)rectangle(t);
					\draw[edge, dotted] (s1) to[out=45, in=135] (t);
				\end{scope}
				\draw[name path=l1, edge, dotted] (s1) to[out=20, in=160] (t);
				\draw[name path=l2, edge, dotted] (s1) to (t);
				\draw[name path=l3, edge, dotted] (s1) to[out=-20, in=200] (t);
				\draw[name path=l4, edge, dotted] (s1) to[out=-45, in=225] (t);
				\node[vertex] at (s2) (s2v) {};
				\node[vertex] at (s2p) (s2pv) {};
				\coordinate[right of=s1, xshift=.5\nodedistance, yshift=\nodedistance] (m1);
				\coordinate[right of=s1, xshift=.5\nodedistance, yshift=-\nodedistance]
				(m2);
				\path[name path=mid-l1] (m1) -- (m2);
				\path[name path=mid-l2] (c1) -- (c2);
				\path[name path=mid-l3] (c3) -- (c4);
				\path[name path=mid-l4] (c1) -- (c2);
				\foreach \x in {l1, l2, l3, l4} {
					\path[name intersections={of={mid-\x} and \x}];
					\node[diamond, fill=color2,inner sep=2pt] at (intersection-1) (sep-\x) {}; 
				}
				
			\end{tikzpicture}
		\end{minipage}
		\begin{minipage}[t]{.32\textwidth}%
			\centering
			\begin{tikzpicture}[node distance=\nodedistance]
				\clip(-1,1.7) rectangle (3.6, -1);
				\node at (-0.8,1.5) (x) {c)};
				
				\node[vertex] (s1) {};
				\labelLeft{s1}{$s_1$};
				\node[vertex, xshift=3\nodedistance] (t) {};
				\labelRight{t}{$t$};
				
				\path[name path= p1] (s1) to[out=45, in=135] (t); 
				
				\coordinate[right of=s1, yshift=\nodedistance] (c1);
				\coordinate[right of=s1, yshift=-\nodedistance] (c2);
				\path[name path=leftmid] (c1) -- (c2);
				\coordinate[left of=t, yshift=\nodedistance] (c3);
				\coordinate[left of=t, yshift=-\nodedistance] (c4);
				\path[name path=rightmid] (c3) -- (c4);
				\path[name intersections={of=leftmid and p1}];
				\coordinate (s2) at (intersection-1);
				\labelAbove{s2}{$s_2$}
				\path[name intersections={of=rightmid and p1}];
				\coordinate (s2p) at (intersection-1);
				\labelAbove{s2p}{$s_2^\prime$}
				\draw[edge, color2] (s2) to (s2p);
				\begin{scope}
					\clip(s1)rectangle(s2);
					\draw[edge, dotted, color1] (s1) to[out=45, in=135] (t);
				\end{scope}
				\begin{scope}
					\clip(s2p)rectangle(t);
					\draw[edge, dotted] (s1) to[out=45, in=135] (t);
				\end{scope}
				\draw[name path=l1, edge, dotted] (s1) to[out=20, in=160] (t);
				\draw[name path=l2, edge, dotted] (s1) to (t);
				\draw[name path=l3, edge, dotted] (s1) to[out=-20, in=200] (t);
				\draw[name path=l4, edge, dotted] (s1) to[out=-45, in=225] (t);
				
				\coordinate[right of=s1, xshift=.5\nodedistance, yshift=\nodedistance] (m1);
				\coordinate[right of=s1, xshift=.5\nodedistance, yshift=-\nodedistance]
				(m2);
				\path[name path=mid-l1] (m1) -- (m2);
				\path[name path=mid-l2] (c1) -- (c2);
				\path[name path=mid-l3] (c3) -- (c4);
				\path[name path=mid-l4] (c1) -- (c2);
				\foreach \x in {l1, l2, l3, l4} {
					\path[name intersections={of={mid-\x} and \x}];
					\node[diamond, fill=color2,inner sep=2pt] at (intersection-1) (sep-\x) {}; 
				}
				
				\coordinate[right of=c2, xshift=-.25\nodedistance, yshift=-.5\nodedistance]
				(c2p);
				\draw[edge, dashed, color1] (s2) .. controls (c2p) and (c4) .. (t);
				\node[vertex] at (s2) (s2v) {};
				\node[vertex] at (s2p) (s2pv) {};
			\end{tikzpicture}
		\end{minipage}
		
		\caption{Case~1 in the proof of Theorem~\ref{the:main}: Supposed separation of
	$s_1$ and~$t$. The colored diamonds correspond to elements in $(W,F)$. The
	dotted lines are mutually internally disjoint. The solid line is a single
	edge. The colored lines that are not solid indicate a connection of vertices that does not touch
	colored vertices or edges.} \label{gt::fig:stsep_s2on}
	\end{figure}

	Property~\ref{prop:1} is satisfied as $s_2^\prime P_{k+1}$ is an
	$s_2^\prime$-$t$ path in $G-s_2s_2^\prime$. Suppose that there do not exist
	$k+1$ internally disjoint $s_1$-$t$ paths in $G-s_2s_2^\prime$. By
	Menger's Theorem there is an $s_1$-$t$ disconnecting pair $(W,F)$ of
	cardinality $k$. Since in $G-s_2s_2^\prime$ the internally disjoint
	paths $P_1,\dots, P_k$ still exist, all elements of $(W,F)$ are contained in the
	paths $P_1,\dots, P_k$, \textit{cf.}~Figure~\ref{gt::fig:stsep_s2on} a). Thus, the path
	$P_{k+1}s_2$ still exists in
	$G-s_2s_2^\prime-W-F$ and $(W,F)$ is an $\{s_1,s_2\}$-$t$ disconnecting pair in
	$G-s_2s_2^\prime$, \textit{cf.}~Figure~\ref{gt::fig:stsep_s2on} b). By assumption $(W,
	F\cup\{s_2s_2^\prime\})$ is not an
	$\{s_1,s_2\}$-$t$ disconnecting pair in $G$ and there exists some
	$\{s_1,s_2\}$-$t$ path in $G-W-F-s_2s_2^\prime$,
	\textit{cf.}~Figure~\ref{gt::fig:stsep_s2on} c), which yields a contradiction. Hence,
	Property~\ref{prop:2}
	is satisfied in $G-s_2s_2^\prime$ and $s_1,s_2^\prime, t$.
	
	\begin{figure}[ht]
		\begin{minipage}[t]{.32\textwidth}%
			\centering
			\begin{tikzpicture}[node distance=\nodedistance]
			\clip(-1,1.7) rectangle (3.6, -1);
			\node at (-0.8,1.5) (x) {a)};
			
			\node[vertex] (s1) {};
			\labelLeft{s1}{$s_1$};
			\node[vertex, xshift=3\nodedistance] (t) {};
			\labelRight{t}{$t$};
			
			\path[name path= l5] (s1) to[out=45, in=135] (t); 
			
			\coordinate[right of=s1, yshift=\nodedistance] (c1);
			\coordinate[right of=s1, yshift=-\nodedistance] (c2);
			\path[name path=leftmid] (c1) -- (c2);
			\coordinate[left of=t, yshift=\nodedistance] (c3);
			\coordinate[left of=t, yshift=-\nodedistance] (c4);
			\path[name path=rightmid] (c3) -- (c4);
			\path[name intersections={of=leftmid and l5}];
			\coordinate (s2) at (intersection-1);
			\labelAbove{s2}{$s_2$}
			\path[name intersections={of=rightmid and l5}];
			\coordinate (s2p) at (intersection-1);
			\labelAbove{s2p}{$s_2^\prime$}
			\begin{scope}
			\clip(s1)rectangle(s2)(s2p)rectangle(t);
			\draw[edge, dotted] (s1) to[out=45, in=135] (t);
			\end{scope}
			\draw[name path=l1, edge, dotted] (s1) to[out=20, in=160] (t);
			\draw[name path=l2, edge, dotted] (s1) to (t);
			\draw[name path=l3, edge, dotted] (s1) to[out=-20, in=200] (t);
			\draw[name path=l4, edge, dotted] (s1) to[out=-45, in=225] (t);
			\node[vertex] at (s2) (s2v) {};
			\node[vertex] at (s2p) (s2pv) {};
			\coordinate[right of=s1, xshift=.5\nodedistance, yshift=\nodedistance] (m1);
			\coordinate[right of=s1, xshift=.5\nodedistance, yshift=-\nodedistance]
			(m2);
			\path[name path=mid-l1] (m1) -- (m2);
			\path[name path=mid-l2] (c1) -- (c2);
			\path[name path=mid-l3] (c3) -- (c4);
			\path[name path=mid-l4] (c1) -- (c2);
			\coordinate[left of=t, xshift=.5\nodedistance, yshift=\nodedistance] (c5);
			\coordinate[left of=t, xshift=.5\nodedistance, yshift=-\nodedistance] (c6);
			\path[name path=mid-l5] (c5) -- (c6);
			\foreach \x in {l1, l2, l3, l4, l5} {
				\path[name intersections={of={mid-\x} and \x}];
				\node[diamond, fill=color2,inner sep=2pt] at (intersection-1) (sep-\x) {}; 
			}
			\end{tikzpicture}
		\end{minipage}
		\begin{minipage}[t]{.32\textwidth}%
			\centering
			\begin{tikzpicture}[node distance=\nodedistance]
			\clip(-1,1.7) rectangle (3.6, -1);
			\node at (-0.8,1.5) (x) {b)};
			
			\node[vertex] (s1) {};
			\labelLeft{s1}{$s_1$};
			\node[vertex, xshift=3\nodedistance] (t) {};
			\labelRight{t}{$t$};
			
			\path[name path= l5] (s1) to[out=45, in=135] (t); 
			
			\coordinate[right of=s1, yshift=\nodedistance] (c1);
			\coordinate[right of=s1, yshift=-\nodedistance] (c2);
			\path[name path=leftmid] (c1) -- (c2);
			\coordinate[left of=t, yshift=\nodedistance] (c3);
			\coordinate[left of=t, yshift=-\nodedistance] (c4);
			\path[name path=rightmid] (c3) -- (c4);
			\path[name intersections={of=leftmid and l5}];
			\coordinate (s2) at (intersection-1);
			\labelAbove{s2}{$s_2$}
			\path[name intersections={of=rightmid and l5}];
			\coordinate (s2p) at (intersection-1);
			\labelAbove{s2p}{$s_2^\prime$}
			\draw[edge] (s2) to (s2p);
			\begin{scope}
			\clip(s1)rectangle(s2)(s2p)rectangle(t);
			\draw[edge, dotted] (s1) to[out=45, in=135] (t);
			\end{scope}
			\draw[name path=l1, edge, dotted] (s1) to[out=20, in=160] (t);
			\draw[name path=l2, edge, dotted] (s1) to (t);
			\draw[name path=l3, edge, dotted] (s1) to[out=-20, in=200] (t);
			\draw[name path=l4, edge, dotted] (s1) to[out=-45, in=225] (t);
			\node[vertex] at (s2) (s2v) {};
			\node[vertex] at (s2p) (s2pv) {};
			\coordinate[right of=s1, xshift=.5\nodedistance, yshift=\nodedistance] (m1);
			\coordinate[right of=s1, xshift=.5\nodedistance, yshift=-\nodedistance]
			(m2);
			\path[name path=mid-l1] (m1) -- (m2);
			\path[name path=mid-l2] (c1) -- (c2);
			\path[name path=mid-l3] (c3) -- (c4);
			\path[name path=mid-l4] (c1) -- (c2);
			\coordinate[left of=t, xshift=.5\nodedistance, yshift=\nodedistance] (c5);
			\coordinate[left of=t, xshift=.5\nodedistance, yshift=-\nodedistance] (c6);
			\path[name path=mid-l5] (c5) -- (c6);
			\foreach \x in {l1, l2, l3, l4, l5} {
				\path[name intersections={of={mid-\x} and \x}];
				\node[diamond, fill=color2,inner sep=2pt] at (intersection-1) (sep-\x) {}; 
			}
			
			\end{tikzpicture}
		\end{minipage}
		\begin{minipage}[t]{.32\textwidth}%
			\centering
			\begin{tikzpicture}[node distance=\nodedistance]
			\clip(-1,1.7) rectangle (3.6, -1);
			\node at (-0.8,1.5) (x) {c)};
			
			\node[vertex] (s1) {};
			\labelLeft{s1}{$s_1$};
			\node[vertex, xshift=3\nodedistance] (t) {};
			\labelRight{t}{$t$};
			
			\path[name path=l5] (s1) to[out=45, in=135] (t); 
			
			\coordinate[right of=s1, yshift=\nodedistance] (c1);
			\coordinate[right of=s1, yshift=-\nodedistance] (c2);
			\path[name path=leftmid] (c1) -- (c2);
			\coordinate[left of=t, yshift=\nodedistance] (c3);
			\coordinate[left of=t, yshift=-\nodedistance] (c4);
			\path[name path=rightmid] (c3) -- (c4);
			\path[name intersections={of=leftmid and l5}];
			\coordinate (s2) at (intersection-1);
			\labelAbove{s2}{$s_2$}
			\path[name intersections={of=rightmid and l5}];
			\coordinate (s2p) at (intersection-1);
			\labelAbove{s2p}{$s_2^\prime$}
			\draw[edge, color1] (s2) to (s2p);
			\begin{scope}
			\clip(s1)rectangle(s2);
			\draw[edge, dotted, color1] (s1) to[out=45, in=135] (t);
			\end{scope}
			\begin{scope}
			\clip(s2p)rectangle(t);
			\draw[edge, dotted] (s1) to[out=45, in=135] (t);
			\end{scope}
			\draw[name path=l1, edge, dotted] (s1) to[out=20, in=160] (t);
			\draw[name path=l2, edge, dotted] (s1) to (t);
			\draw[name path=l3, edge, dotted] (s1) to[out=-20, in=200] (t);
			\draw[name path=l4, edge, dotted] (s1) to[out=-45, in=225] (t);
			
			\coordinate[right of=s1, xshift=.5\nodedistance, yshift=\nodedistance] (m1);
			\coordinate[right of=s1, xshift=.5\nodedistance, yshift=-\nodedistance]
			(m2);
			\path[name path=mid-l1] (m1) -- (m2);
			\path[name path=mid-l2] (c1) -- (c2);
			\path[name path=mid-l3] (c3) -- (c4);
			\path[name path=mid-l4] (c1) -- (c2);
			\coordinate[left of=t, xshift=.5\nodedistance, yshift=\nodedistance] (c5);
			\coordinate[left of=t, xshift=.5\nodedistance, yshift=-\nodedistance] (c6);
			\path[name path=mid-l5] (c5) -- (c6);
			\foreach \x in {l1, l2, l3, l4, l5} {
				\path[name intersections={of={mid-\x} and \x}];
				\node[diamond, fill=color2,inner sep=2pt] at (intersection-1) (sep-\x) {}; 
			}
			
			\coordinate[right of=c2, xshift=-.25\nodedistance, yshift=-.5\nodedistance]
			(c2p);
			\draw[edge, dashed, color1] (s2) .. controls (c2p) and (c4) .. (t);
			\node[vertex] at (s2) (s2v) {};
			\node[vertex] at (s2p) (s2pv) {};
			\end{tikzpicture}
		\end{minipage}
		\caption{Case~1 in the proof of Theorem~\ref{the:main}: Supposed
			separation of $\{s_1,s_2^\prime\}$ and $t$. The colored diamonds correspond
			to elements in $(W,F)$. The dotted lines are mutually internally
			disjoint. The solid
			line is a single edge. The colored lines indicate a connection of vertices
			that
			does not touch $(W,F)$.} \label{gt::fig:stsep_s2onb}
	\end{figure}
	
	Now suppose there
	exists an $\{s_1,s_2^\prime\}$-$t$ disconnecting pair $(W,F)$ of cardinality
	$k+1$ and order at most $k$ in $G-s_2s_2^\prime$. As the paths
	$P_1,\dots,P_k,s_2^\prime P_{k+1}$ are internally disjoint, each element
	of $(W,F)$ is contained in one of these paths,
	\textit{cf.}~Figure~\ref{gt::fig:stsep_s2onb} a). Thus, $P_{k+1}s_2$ still exists
	in $G-s_2s_2^\prime-W- F$ and neither $s_2$ nor $s_2^\prime$ are contained in
	the same component as~$t$ in $G-s_2s_2^\prime-W -F$. This implies that $(W,F)$
	is an $\{s_1,s_2\}$-$t$ disconnecting pair in~$G$,
	\textit{cf.}~Figure~\ref{gt::fig:stsep_s2onb} b). Again this is a
	contradiction to Property~\ref{prop:3} in~$G$,
	\textit{cf.}~Figure~\ref{gt::fig:stsep_s2onb} c) and hence
	Property~\ref{prop:3} is satisfied for $G-s_2s_2^\prime$ and
	$s_1,s_2^\prime, t$.

	As $G-s_2s_2^\prime$ contains $\card{E(G)}-1$ edges,
	statement~\eqref{gt::thm:bhc_l2_helper_sub1} is applicable
	and there exist $k+2$ edge-disjoint paths of which $k+1$ are internally
	disjoint $s_1$-$t$ paths, say $P_1^\prime,\dots, P_{k+1}^\prime$, and of
	which one is an $s_2^\prime$-$t$ path, say $P_{k+2}^\prime$,
	\textit{cf.}~Figure~\ref{gt::fig:stskeins_s2on}~b). 
	
	If $s_2\in V(P_{k+2}^\prime)$ the paths $P_1^\prime,\dots, P_{k+1}^\prime,
	s_2P_{k+2}^\prime$ are the desired paths in $G$. Otherwise the paths
	$P_1^\prime,\dots, P_{k+1}^\prime,s_2s_2^\prime \cup P_{k+2}^\prime$ form the
	desired paths, \textit{cf.}~Figure~\ref{gt::fig:stskeins_s2on}~c).
	
	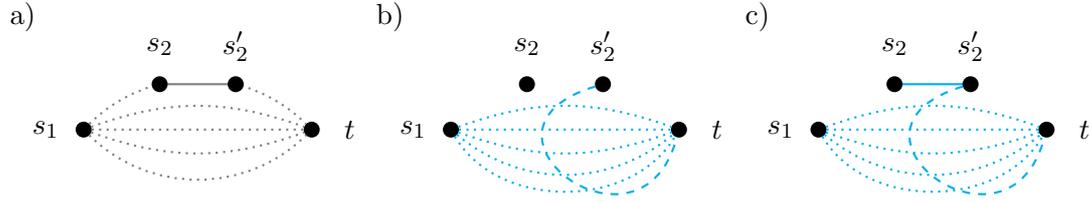
\begin{figure}[ht]
		\begin{minipage}[t]{.32\textwidth}%
			\centering
			\begin{tikzpicture}[node distance=\nodedistance]
			\clip(-1,1.7) rectangle (3.6, -1);
			\node at (-0.8,1.5) (x) {a)};
			
			\node[vertex] (s1) {};
			\labelLeft{s1}{$s_1$};
			\node[vertex, xshift=3\nodedistance] (t) {};
			\labelRight{t}{$t$};
			
			\path[name path= p1] (s1) to[out=45, in=135] (t); 
			
			\coordinate[right of=s1, yshift=\nodedistance] (c1);
			\coordinate[right of=s1] (c2);
			\path[name path=leftmid] (c1) -- (c2);
			\coordinate[left of=t, yshift=\nodedistance] (c3);
			\coordinate[left of=t] (c4);
			\path[name path=rightmid] (c3) -- (c4);
			\path[name intersections={of=leftmid and p1}];
			\coordinate (s2) at (intersection-1);
			\labelAbove{s2}{$s_2$}
			\path[name intersections={of=rightmid and p1}];
			\coordinate (s2p) at (intersection-1);
			\labelAbove{s2p}{$s_2^\prime$}
			\draw[edge] (s2) to (s2p);
			\begin{scope}
			\clip(s1)rectangle(s2)(s2p)rectangle(t);
			\draw[edge, dotted] (s1) to[out=45, in=135] (t);
			\end{scope}
			\draw[edge, dotted] (s1) to[out=20, in=160] (t);
			\draw[edge, dotted] (s1) to (t);
			\draw[edge, dotted] (s1) to[out=-20, in=200] (t);
			\draw[edge, dotted] (s1) to[out=-45, in=225] (t);
			\node[vertex] at (s2) (s2v) {};
			\node[vertex] at (s2p) (s2pv) {};
			\end{tikzpicture}
		\end{minipage}
		\begin{minipage}[t]{.32\textwidth}%
			\centering
			\begin{tikzpicture}[node distance=\nodedistance]
			\clip(-1,1.7) rectangle (3.6, -1);
			\node at (-0.8,1.5) (x) {b)};
			
			\node[vertex] (s1) {};
			\labelLeft{s1}{$s_1$};
			\node[vertex, xshift=3\nodedistance] (t) {};
			\labelRight{t}{$t$};
			
			\path[name path= p1] (s1) to[out=45, in=135] (t); 
			
			\coordinate[right of=s1, yshift=\nodedistance] (c1);
			\coordinate[right of=s1] (c2);
			\path[name path=leftmid] (c1) -- (c2);
			\coordinate[left of=t, yshift=\nodedistance] (c3);
			\coordinate[left of=t] (c4);
			\path[name path=rightmid] (c3) -- (c4);
			\path[name intersections={of=leftmid and p1}];
			\coordinate (s2) at (intersection-1);
			\labelAbove{s2}{$s_2$}
			\path[name intersections={of=rightmid and p1}];
			\coordinate (s2p) at (intersection-1);
			\labelAbove{s2p}{$s_2^\prime$}
			\node[vertex] at (s2) (s2v) {};
			\node[vertex] at (s2p) (s2pv) {};
			
			\draw[edge, dotted, color1] (s1) to[out=20, in=160] (t);
			\draw[edge, dotted, color1] (s1) to[out=0, in=180] (t);
			\draw[edge, dotted, color1] (s1) to[out=-20, in=200] (t);
			\draw[edge, dotted, color1] (s1) to[out=-40, in=220] (t);
			\draw[edge, dotted, color1] (s1) to[out=-60, in=240] (t);
			
			\coordinate[below of=s2p, yshift=-1.5\nodedistance, xshift=.5\nodedistance]
			(c5);
			\draw[edge, dashed, color1] (s2pv) .. controls (s1) and (c5) .. (t);
			
			\end{tikzpicture}
		\end{minipage}
		\begin{minipage}[t]{.32\textwidth}%
			\centering
			\begin{tikzpicture}[node distance=\nodedistance]
			\clip(-1,1.7) rectangle (3.6, -1);
			\node at (-0.8,1.5) (x) {c)};
			
			\node[vertex] (s1) {};
			\labelLeft{s1}{$s_1$};
			\node[vertex, xshift=3\nodedistance] (t) {};
			\labelRight{t}{$t$};
			
			\path[name path= p1] (s1) to[out=45, in=135] (t); 
			
			\coordinate[right of=s1, yshift=\nodedistance] (c1);
			\coordinate[right of=s1] (c2);
			\path[name path=leftmid] (c1) -- (c2);
			\coordinate[left of=t, yshift=\nodedistance] (c3);
			\coordinate[left of=t] (c4);
			\path[name path=rightmid] (c3) -- (c4);
			\path[name intersections={of=leftmid and p1}];
			\coordinate (s2) at (intersection-1);
			\labelAbove{s2}{$s_2$}
			\path[name intersections={of=rightmid and p1}];
			\coordinate (s2p) at (intersection-1);
			\labelAbove{s2p}{$s_2^\prime$}
			\draw[edge, color1] (s2) to (s2p);
			\node[vertex] at (s2) (s2v) {};
			\node[vertex] at (s2p) (s2pv) {};
			
			\draw[edge, dotted, color1] (s1) to[out=20, in=160] (t);
			\draw[edge, dotted, color1] (s1) to[out=0, in=180] (t);
			\draw[edge, dotted, color1] (s1) to[out=-20, in=200] (t);
			\draw[edge, dotted, color1] (s1) to[out=-40, in=220] (t);
			\draw[edge, dotted, color1] (s1) to[out=-60, in=240] (t);
			
			\coordinate[below of=s2p, yshift=-1.5\nodedistance, xshift=.5\nodedistance]
			(c5);
			\draw[edge, dashed, color1] (s2pv) .. controls (s1) and (c5) .. (t);
			
			\end{tikzpicture}
		\end{minipage}
		\caption{Paths in Case 1 of the proof of Theorem~\ref{the:main}. The dotted
			lines are mutually internally disjoint. The dashed line is
			edge-disjoint to the dotted lines. The solid line is a single edge not
			contained in any of the displayed paths. The colored lines form $k+2$
			edge-disjoint paths of which $k+1$ are internally disjoint.}
		\label{gt::fig:stskeins_s2on}
	\end{figure}
	
	Thus from now on, in addition to~\eqref{gt::thm:bhc_l2_helper_sub1}, we may
	assume:
	\begin{align}
		\label{gt::thm:bhc_l2_helper_sub2}
		\begin{minipage}{.9\textwidth}
			\textit{Let $G^\prime$ be a graph with $\card{E(G^\prime)}=\card{E(G)}$ and
				vertices $s_1^\prime, s_2^\prime, t^\prime\in V(G^\prime)$ such that
				$s_1^\prime\neq t$ and $s_2^\prime$ is contained in an
				\skein{s_1}{t}{(k+1)}.
				If Properties~\ref{prop:1} through~\ref{prop:3} are
				satisfied, then there exist $k+2$ edge-disjoint paths of which $k+1$ are
				internally disjoint $s_1^\prime$-$t^\prime$ paths and of which one is
				an $s_2^\prime$-$t^\prime$ path.}
		\end{minipage} 
	\end{align}
	
	\noindent\textbf{Case 2:} The vertex $s_2$ is not contained in any
	\skein{s_1}{t}{(k+1)}.
	
	Denote by $s_2^\prime$ a vertex on an \skein{s_1}{t}{(k+1)} that is closest
	(with respect to the number of edges) to $s_2$ among all vertices on
	\skeins{s_1}{t}{(k+1)}, \textit{cf.}~Figure~\ref{gt::fig:stskeins_s2gone}. Now we show that the assumptions still hold if we
	replace $s_2$ by $s_2^\prime$.

	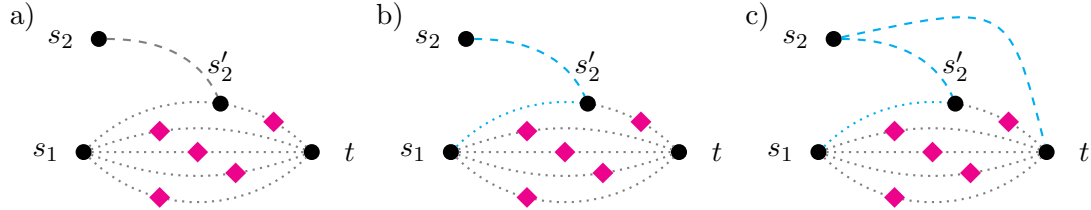
\begin{figure}[ht]
		\begin{minipage}[t]{.32\textwidth}%
			\centering
			\begin{tikzpicture}[node distance=\nodedistance]
			\clip(-1,2.1) rectangle (3.6, -1);
			\node at (-0.8,1.8) (x) {a)};
			\node[vertex] (s1) {};
			\labelLeft{s1}{$s_1$};
			\node[vertex, xshift=3\nodedistance] (t) {};
			\labelRight{t}{$t$};
			\node[vertex, above of=s1, yshift=.5\nodedistance, xshift=.2\nodedistance]
			(s2) {};
			\labelLeft{s2}{$s_2$}
			
			\draw[name path= l5, edge, dotted] (s1) to[out=45, in=135] (t); 
			\path[name path=s1skein] (s2) -- (t);
			\path[name intersections={of=l5 and s1skein}];
			\draw[edge, dashed] (s2) to[out=0, in=110] (intersection-1);
			\node[vertex] (s2p) at (intersection-1) {};
			\labelAbove{s2p}{$s_2^\prime$}
			\draw[name path=l1, edge, dotted] (s1) to[out=20, in=160] (t);
			\draw[name path=l2, edge, dotted] (s1) to (t);
			\draw[name path=l3, edge, dotted] (s1) to[out=-20, in=200] (t);
			\draw[name path=l4, edge, dotted] (s1) to[out=-45, in=225] (t);
			
			\coordinate[right of=s1, xshift=.5\nodedistance, yshift=\nodedistance] (m1);
			\coordinate[right of=s1, xshift=.5\nodedistance, yshift=-\nodedistance]
			(m2);
			\coordinate[right of=s1, yshift=\nodedistance] (c1);
			\coordinate[right of=s1, yshift=-\nodedistance] (c2);
			\coordinate[left of=t, yshift=\nodedistance] (c3);
			\coordinate[left of=t, yshift=-\nodedistance] (c4);
			\path[name path=mid-l1] (c1) -- (c2);
			\path[name path=mid-l2] (m1) -- (m2);
			\path[name path=mid-l3] (c3) -- (c4);
			\path[name path=mid-l4] (c1) -- (c2);
			\coordinate[left of=t, xshift=.5\nodedistance, yshift=\nodedistance] (c5);
			\coordinate[left of=t, xshift=.5\nodedistance, yshift=-\nodedistance] (c6);
			\path[name path=mid-l5] (c5) -- (c6);
			\foreach \x in {l1, l2, l3, l4, l5} {
				\path[name intersections={of={mid-\x} and \x}];
				\node[diamond, fill=color2,inner sep=2pt] at (intersection-1) (sep-\x) {}; 
			}
			\end{tikzpicture}
		\end{minipage}
		\begin{minipage}[t]{.32\textwidth}%
			\centering
			\begin{tikzpicture}[node distance=\nodedistance]
			\clip(-1,2.1) rectangle (3.6, -1);
			\node at (-0.8,1.8) (x) {b)};
			\node[vertex] (s1) {};
			\labelLeft{s1}{$s_1$};
			\node[vertex, xshift=3\nodedistance] (t) {};
			\labelRight{t}{$t$};
			\node[vertex, above of=s1, yshift=.5\nodedistance, xshift=.2\nodedistance]
			(s2) {};
			\labelLeft{s2}{$s_2$}
			
			\path[name path= l5] (s1) to[out=45, in=135] (t); 
			\path[name path=s1skein] (s2) -- (t);
			\path[name intersections={of=l5 and s1skein}];
			\coordinate[above of=intersection-1] (xxx);
			\begin{scope}
			\clip(s1)rectangle(xxx);
			\draw[edge, dotted, color1] (s1) to[out=45, in=135] (t);
			\end{scope}
			\begin{scope}
			\clip(intersection-1)rectangle(t);
			\draw[edge, dotted] (s1) to[out=45, in=135] (t);
			\end{scope}
			\draw[edge, dashed, color1] (s2) to[out=0, in=110] (intersection-1);
			\node[vertex] (s2p) at (intersection-1) {};
			\labelAbove{s2p}{$s_2^\prime$}
			\draw[name path=l1, edge, dotted] (s1) to[out=20, in=160] (t);
			\draw[name path=l2, edge, dotted] (s1) to (t);
			\draw[name path=l3, edge, dotted] (s1) to[out=-20, in=200] (t);
			\draw[name path=l4, edge, dotted] (s1) to[out=-45, in=225] (t);
			
			\coordinate[right of=s1, xshift=.5\nodedistance, yshift=\nodedistance] (m1);
			\coordinate[right of=s1, xshift=.5\nodedistance, yshift=-\nodedistance]
			(m2);
			\coordinate[right of=s1, yshift=\nodedistance] (c1);
			\coordinate[right of=s1, yshift=-\nodedistance] (c2);
			\coordinate[left of=t, yshift=\nodedistance] (c3);
			\coordinate[left of=t, yshift=-\nodedistance] (c4);
			\path[name path=mid-l1] (c1) -- (c2);
			\path[name path=mid-l2] (m1) -- (m2);
			\path[name path=mid-l3] (c3) -- (c4);
			\path[name path=mid-l4] (c1) -- (c2);
			\coordinate[left of=t, xshift=.5\nodedistance, yshift=\nodedistance] (c5);
			\coordinate[left of=t, xshift=.5\nodedistance, yshift=-\nodedistance] (c6);
			\path[name path=mid-l5] (c5) -- (c6);
			\foreach \x in {l1, l2, l3, l4, l5} {
				\path[name intersections={of={mid-\x} and \x}];
				\node[diamond, fill=color2,inner sep=2pt] at (intersection-1) (sep-\x) {}; 
			}
			\end{tikzpicture}
		\end{minipage}
		\begin{minipage}[t]{.32\textwidth}%
			\centering
			\begin{tikzpicture}[node distance=\nodedistance]
			\clip(-1,2.1) rectangle (3.6, -1);
			\node at (-0.8,1.8) (x) {c)};
			\node[vertex] (s1) {};
			\labelLeft{s1}{$s_1$};
			\node[vertex, xshift=3\nodedistance] (t) {};
			\labelRight{t}{$t$};
			\node[vertex, above of=s1, yshift=.5\nodedistance, xshift=.2\nodedistance]
			(s2) {};
			\labelLeft{s2}{$s_2$}
			
			\path[name path= l5] (s1) to[out=45, in=135] (t); 
			\path[name path=s1skein] (s2) -- (t);
			\path[name intersections={of=l5 and s1skein}];
			\coordinate[above of=intersection-1] (xxx);
			\begin{scope}
			\clip(s1)rectangle(xxx);
			\draw[edge, dotted, color1] (s1) to[out=45, in=135] (t);
			\end{scope}
			\begin{scope}
			\clip(intersection-1)rectangle(t);
			\draw[edge, dotted] (s1) to[out=45, in=135] (t);
			\end{scope}
			\draw[edge, dashed, color1] (s2) to[out=0, in=110] (intersection-1);
			\node[vertex] (s2p) at (intersection-1) {};
			\labelAbove{s2p}{$s_2^\prime$}
			\draw[name path=l1, edge, dotted] (s1) to[out=20, in=160] (t);
			\draw[name path=l2, edge, dotted] (s1) to (t);
			\draw[name path=l3, edge, dotted] (s1) to[out=-20, in=200] (t);
			\draw[name path=l4, edge, dotted] (s1) to[out=-45, in=225] (t);
			
			\coordinate[right of=s1, xshift=.5\nodedistance, yshift=\nodedistance] (m1);
			\coordinate[right of=s1, xshift=.5\nodedistance, yshift=-\nodedistance]
			(m2);
			\coordinate[right of=s1, yshift=\nodedistance] (c1);
			\coordinate[right of=s1, yshift=-\nodedistance] (c2);
			\coordinate[left of=t, yshift=\nodedistance] (c3);
			\coordinate[left of=t, yshift=-\nodedistance] (c4);
			\path[name path=mid-l1] (c1) -- (c2);
			\path[name path=mid-l2] (m1) -- (m2);
			\path[name path=mid-l3] (c3) -- (c4);
			\path[name path=mid-l4] (c1) -- (c2);
			\coordinate[left of=t, xshift=.5\nodedistance, yshift=\nodedistance] (c5);
			\coordinate[left of=t, xshift=.5\nodedistance, yshift=-\nodedistance] (c6);
			\path[name path=mid-l5] (c5) -- (c6);
			\foreach \x in {l1, l2, l3, l4, l5} {
				\path[name intersections={of={mid-\x} and \x}];
				\node[diamond, fill=color2,inner sep=2pt] at (intersection-1) (sep-\x) {}; 
			}
			\coordinate[above of=c5] (cstar);
			\draw[edge, dashed, color1] (s2) .. controls (cstar) .. (t);
			\end{tikzpicture}
		\end{minipage}
		\caption{Case 2 in the proof of
			Theorem~\ref{the:main}: Supposed separation of
			$\{s_1,s_2^\prime\}$ and $t$. The colored diamonds correspond to
			$(W,F)$. The dotted lines are mutually internally disjoint. The
			colored
			lines indicate a connection of vertices that does not touch $(W,F)$.}
		\label{gt::fig:stsep_s2gone}
	\end{figure}	
	
	Observe that Properties~\ref{prop:1} and \ref{prop:2} are
	satisfied when replacing $s_2$ by $s_2^\prime$. To see that
	Property~\ref{prop:3} still holds, suppose that there exists an
	$\{s_1,s_2^\prime\}$-$t$ disconnecting pair $(W, F)$ of cardinality $k+1$ and
	order at most $k$. As there cannot be any $s_1$-$t$ path left in $G-W-F$, all
	elements of the disconnecting pair are contained in some \skein{s_1}{t}{(k+1)},
	\textit{cf.}~Figure~\ref{gt::fig:stsep_s2gone} a). The vertex set $W$ may also not
	contain $s_2^\prime$ by definition. Thus, the vertices $s_1$, $s_2$ and
	$s_2^\prime$ are contained in the same component of $G-W-F$,
	\textit{cf.}~Figure~\ref{gt::fig:stsep_s2gone} b). In $G$, the pair $(W,F)$ cannot be
	$\{s_1,s_2\}$-$t$ disconnecting by assumption,
	\textit{cf.}~Figure~\ref{gt::fig:stsep_s2gone} c). This contradicts $(W,F)$ being
	$\{s_1,s_2^\prime\}$-$t$ disconnecting in $G$ and
	Property~\ref{prop:3} is satisfied.

	\begin{figure}[ht]
		\begin{minipage}[t]{.32\textwidth}%
			\centering
			\begin{tikzpicture}[node distance=\nodedistance]
				\clip(-1,2.1) rectangle (3.6, -1);
				\node at (-0.8,1.8) (x) {a)};
				\node[vertex] (s) {};
				\labelLeft{s}{$s_1$};
				\node[vertex, xshift=3\nodedistance] (t) {};
				\labelRight{t}{$t$};
				\node[vertex, above of=s, yshift=.5\nodedistance, xshift=.2\nodedistance]
				(s1) {};
				\labelLeft{s1}{$s_2$}
				
				\draw[name path= p1, edge, dotted] (s) to[out=45, in=135] (t); 
				\path[name path=s1skein] (s1) -- (t);
				\path[name intersections={of=p1 and s1skein}];
				\draw[edge, dashed] (s1) to[out=0, in=110] (intersection-1);
				\node[vertex] (s2) at (intersection-1) {};
				\labelAbove{s2}{$s_2^\prime$}
				\draw[edge, dotted] (s) to[out=20, in=160] (t);
				\draw[edge, dotted] (s) to (t);
				\draw[edge, dotted] (s) to[out=-20, in=200] (t);
				\draw[edge, dotted] (s) to[out=-45, in=225] (t);
			\end{tikzpicture}
		\end{minipage}
		\begin{minipage}[t]{.32\textwidth}%
			\centering
			\begin{tikzpicture}[node distance=\nodedistance]
				\clip(-1,2.1) rectangle (3.6, -1);
				\node at (-0.8,1.8) (x) {b)};
				\node[vertex] (s) {};
				\labelLeft{s}{$s_1$};
				\node[vertex, xshift=3\nodedistance] (t) {};
				\labelRight{t}{$t$};
				\node[vertex, above of=s, yshift=.5\nodedistance, xshift=.2\nodedistance]
				(s1) {};
				\labelLeft{s1}{$s_2$}
				
				\path[name path= p1] (s) to[out=45, in=135] (t); 
				\path[name path=s1skein] (s1) -- (t);
				\path[name intersections={of=p1 and s1skein}];
				\draw[edge, dashed] (s1) to[out=0, in=110] (intersection-1);
				\node[vertex] (s2) at (intersection-1) {};
				\labelAbove{s2}{$s_2^\prime$}
				
				\draw[edge, dotted, color1] (s) to[out=10, in=170] (t);
				\draw[edge, dotted, color1] (s) to[out=-10, in=190] (t);
				\draw[edge, dotted, color1] (s) to[out=-30, in=210] (t);
				\draw[edge, dotted, color1] (s) to[out=-50, in=230] (t);
				
				\coordinate[below of=s1,yshift=.5\nodedistance, xshift=.8\nodedistance]
				(c4);
				\coordinate[left of=t, yshift=.3\nodedistance, xshift=0\nodedistance] (c3);
				\coordinate[above of=t, yshift=-.5\nodedistance] (c5);
				\draw[edge, dotted, color1] (s) to[out=90, in=180] (c4);
				\draw[edge, dotted, color1] (c4) to[out=0, in=180] (c3);
				\draw[edge, dotted, color1] (c3) to[out=0, in=180] (c5);
				\draw[edge, dotted, color1] (c5) to[out=0, in=45] (t);
				
				\coordinate[below of= s1, xshift=-.5\nodedistance] (c2);
				\coordinate[right of =s1, yshift=2\nodedistance] (c);
				\draw[edge, dashed, color1] (s2) .. controls (c2) and (c) .. (t);
			\end{tikzpicture}
		\end{minipage}
		\begin{minipage}[t]{.32\textwidth}%
			\centering
			\begin{tikzpicture}[node distance=\nodedistance]
				\clip(-1,2.1) rectangle (3.6, -1);
				\node at (-0.8,1.8) (x) {c)};
				\node[vertex] (s) {};
				\labelLeft{s}{$s_1$};
				\node[vertex, xshift=3\nodedistance] (t) {};
				\labelRight{t}{$t$};
				\node[vertex, above of=s, yshift=.5\nodedistance, xshift=.2\nodedistance]
				(s1) {};
				\labelLeft{s1}{$s_2$}
				
				\path[name path= p1] (s) to[out=45, in=135] (t); 
				\path[name path=s1skein] (s1) -- (t);
				\path[name intersections={of=p1 and s1skein}];
				\draw[name path=goal, edge, dashed] (s1) to[out=0, in=110] (intersection-1);
				\node[vertex] (s2) at (intersection-1) {};
				\labelAbove{s2}{$s_2^\prime$}
				
				\draw[edge, dotted, color1] (s) to[out=10, in=170] (t);
				\draw[edge, dotted, color1] (s) to[out=-10, in=190] (t);
				\draw[edge, dotted, color1] (s) to[out=-30, in=210] (t);
				\draw[edge, dotted, color1] (s) to[out=-50, in=230] (t);
				
				\coordinate[below of=s1,yshift=.5\nodedistance, xshift=.8\nodedistance]
				(c4);
				\coordinate[left of=t, yshift=.3\nodedistance, xshift=0\nodedistance] (c3);
				\coordinate[above of=t, yshift=-.5\nodedistance] (c5);
				\draw[edge, dotted, color1] (s) to[out=90, in=180] (c4);
				\draw[edge, dotted, color1] (c4) to[out=0, in=180] (c3);
				\draw[edge, dotted, color1] (c3) to[out=0, in=180] (c5);
				\draw[edge, dotted, color1] (c5) to[out=0, in=45] (t);
				
				\coordinate[below of= s1, xshift=-.5\nodedistance] (c2);
				\coordinate[right of =s1, yshift=2\nodedistance] (c);
				
				\path[name path= final, edge, dashed] (s2) .. controls (c2) and (c) .. (t);
				\path[name intersections={of=final and goal}];
				
				\coordinate[right of=intersection-1, xshift=\nodedistance,
				yshift=.5\nodedistance] (c9);
				\coordinate[left of=intersection-1] (c8);
				\coordinate[below of=s2] (c7);
				\begin{scope}
					\clip(t)rectangle(s2)(c9)rectangle(s2)(intersection-1)rectangle(c9);
					\draw[edge, dashed, color=color1] (s2) .. controls (c2) and (c) .. (t);
				\end{scope}
				\begin{scope}
					\clip(c7)rectangle(c8);
					\draw[edge, dashed] (s2) .. controls (c2) and (c) .. (t);
				\end{scope}
				\node[vertex] (sp) at (intersection-1) {};
				
				\draw[edge, dashed, color=color1] (s1) to[out=0, in=170] (sp);
				\labelAbove{sp}{$s^\prime$}
			\end{tikzpicture}
		\end{minipage}
		\caption{Paths in Case 2 of the proof of Theorem~\ref{the:main}. The dotted
	lines are internally disjoint. The dashed lines are edge-disjoint to the
	dotted lines. The colored lines form $k+2$ edge-disjoint paths of which
	$k+1$ are internally disjoint.} \label{gt::fig:stskeins_s2gone}
	\end{figure}
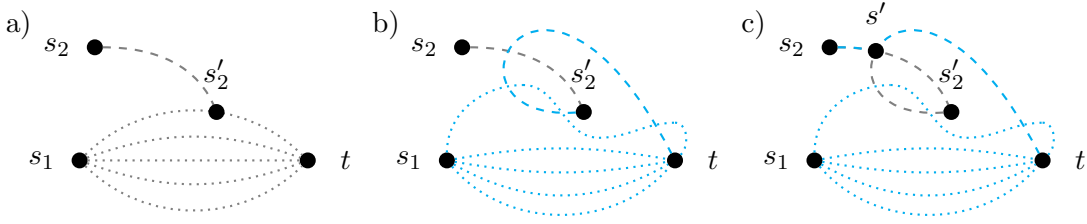
	
	Thus, by~\eqref{gt::thm:bhc_l2_helper_sub2} there exist $k+2$ edge-disjoint
	paths, say $P_1,\dots,P_{k+2}$, such that the paths $P_1,\dots P_{k+1}$ are internally
	disjoint $s_1$-$t$ paths and $P_{k+2}$ is an $s_2^\prime$-$t$ path,
	\textit{cf.}~Figure~\ref{gt::fig:stskeins_s2gone}~b). Denote by $P^\prime$ a shortest
	$s_2$-$s_2^\prime$ path. Note that no element of $P^\prime$, except possibly
	$s_2^\prime$, is contained in $P_1,\dots,P_{k+1}$ as no vertex or edge on an
	\skein{s_1}{t}{(k+1)} is closer to $s_2$ than $s_2^\prime$. Further denote by
	$s^\prime$ the vertex on $P^\prime$ closest to $s_2$ that is also contained in
	$P_{k+2}$, \textit{cf.}~Figure~\ref{gt::fig:stskeins_s2gone} c). Then $P^\prime s^\prime
	\cup s^\prime P_{k+2}$ is an $s_2$-$t$ path that is edge-disjoint to all
	$P_1,\dots, P_{k+1}$ and we obtain the desired paths.
\end{proof}

\begin{proof}[Proof of Theorem~\ref{the:main_result}]
	If $s$ and $t$ are adjacent, then $(k,1)$ is a connectivity pair in $G-st$ and
there exist $k+1$ internally disjoint $s$-$t$ paths in $G$ by
Observation~\ref{obs:bineke_harary_l=1_or_k=0}~\ref{obs:l=1}. Together with the
deleted edge we get the desired paths in $G$. So assume that $s$ and $t$ are
not adjacent. We show that Properties~\ref{prop:1} through~\ref{prop:3} of
Theorem~\ref{the:main} hold for $G$, $s_1=s_2=s$, and $t$.
	
	By the definition of a connectivity pair, there is no $s$-$t$ disconnecting
pair of cardinality less than $k+2$, order at most $k$ and size at most $2$.
Thus, by Menger's Theorem there exist $k+1$ internally disjoint $s$-$t$ paths
and Properties~\ref{prop:1} and~\ref{prop:2} are satisfied. Now suppose
Property~\ref{prop:3} is not satisfied and let $(W,F)$ be an $s$-$t$
disconnecting pair of cardinality $k+1$ and order at most $k$. As $s$ and $t$
are not adjacent, any edge in $F$ has an endvertex not contained in~$\{s,t\}$.
Thus, replacing all but one edge in $(W,F)$ with one of its endvertices that is
not contained in~$\{s,t\}$ we get an $s$-$t$ disconnecting pair of cardinality
$k+1$, order $k$, and size $1$. Such a pair does not exist, as $(k,2)$ is a
connectivity pair for $s$ and $t$ yielding a contradiction. Thus, the
assumptions of Theorem~\ref{the:main} are satisfied and there exist $k+2$
edge-disjoint $s$-$t$ paths in~$G$ of which $k+1$ are internally disjoint.
\end{proof}

Theorem~\ref{the:main_result} is not only a stand-alone result, but can also be
of help when proving the Beineke-Harary-Conjecture for some restricted graph
classes. We illustrate this fact by proving the conjecture for graphs with
treewidth at most~$3$ in the next section.

\section[The Beineke-Harary-Conjecture for Graphs with Small Treewidth]{The
	Beineke-Harary-Conjecture for Graphs with Small Treewidth}
\label{sec:treewidth}
In this section we prove the Beineke-Harary-Conjecture for graphs of treewidth
at most $3$. To this end we recall the definition of treewidth and some 
basic results on tree decompositions. For more details, see~\cite{Bodlaender98}.

For a graph $G$ a \emph{tree decomposition} $(\mathcal{B},\mathcal{T})$ of $G$ 
consists of a tree~$\mathcal{T}$ and a set~$\mathcal{B} = \{B_i \colon i \in 
V(\mathcal{T}) \}$
of \emph{bags} $B_i \subseteq V(G)$ such that
$V(G) = \bigcup_{i \in V(\mathcal{T})} B_i$.
Further, for each edge~${vw \in E(G)}$ there exists a node $i \in V(\mathcal{T})$
such 
that
$v,w \in B_i$, and
if $v \in B_{j_1} \cap B_{j_2},$ then~${v \in B_i}$ for each node
$i$  on the simple path connecting $j_1$ and $j_2$ in $\mathcal{T}$.
A tree decomposition $(\mathcal{B},\mathcal{T})$ has \emph{width} $k$ if each 
bag is of cardinality at most $k+1$ and there
exists some bag of size $k+1$.
The \emph{treewidth} of $G$ is the smallest integer $k$ for which there is a 
width $k$ tree decomposition of $G$.
We write $\tw(G) = k$.
We call a tree decomposition \emph{small} if no bag is completely contained in
any other bag. Every graph $G$ has a small tree decomposition of width $\tw(G)$.

The following result is well known and can, for example, be found in \cite{Diestel00}. We
formulate it here as an observation:
\begin{observation}\label{obs:tw_separator}
	Let $G$ be a graph and $(\mathcal{B},\mathcal{T})$ a tree decomposition of 
	$G$.
	Let $ij\in E(\mathcal{T})$ and denote by $T_i$ and $T_j$ the two 
	subtrees of $\mathcal{T}-ij$ with $i\in V(T_i)$ and $j\in V(T_j)$. If $u\in 
	B_{i^\prime}\setminus (B_i\cap B_j)$ for some $i^\prime\in V(T_i)$ and 
	$v\in B_{j^\prime}\setminus(B_i\cap B_j)$ for some $j^\prime\in V(T_j)$, 
	then $B_i\cap B_j$ is a separator for $u$ and $v$ in $G$. 
\end{observation}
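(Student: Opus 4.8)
The plan is to set $S \coloneqq B_i\cap B_j$ and show directly that $S$ separates $u$ and $v$ in the sense of the definition in Section~\ref{sec:preliminaries}: that is, $S\subseteq V(G)\setminus\{u,v\}$ and $u,v$ lie in different components of $G-S$. The first part is immediate, since $u\in B_{i'}\setminus S$ and $v\in B_{j'}\setminus S$ already give $u,v\notin S$. For the second part I would argue by contradiction, assuming there is a $u$-$v$ path in $G-S$ and deriving a violation of the consistency axiom of tree decompositions. The central object is, for each vertex $w\in V(G)$, the set $\mathcal{T}_w$ of nodes $a\in V(\mathcal{T})$ with $w\in B_a$; by the last axiom of a tree decomposition (vertices occur in a connected set of bags), $\mathcal{T}_w$ induces a subtree of $\mathcal{T}$.

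The key step I would establish first is a \emph{sidedness lemma}: for every vertex $w\notin S$, the subtree $\mathcal{T}_w$ is contained entirely in $V(T_i)$ or entirely in $V(T_j)$. Indeed, if $\mathcal{T}_w$ contained a node of $T_i$ and a node of $T_j$, then, since $\mathcal{T}_w$ is connected and the only edge of $\mathcal{T}$ joining $T_i$ and $T_j$ is $ij$, the path in $\mathcal{T}$ between those two nodes passes through both $i$ and $j$; as $\mathcal{T}_w$ contains all of this path, we get $i,j\in\mathcal{T}_w$, i.e.\ $w\in B_i\cap B_j = S$, contradicting $w\notin S$. This lets me classify each vertex off $S$ as being on the \textbf{$i$-side} or the \textbf{$j$-side}. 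In particular $u$ is on the $i$-side (its subtree meets $V(T_i)$ at $i'$) and $v$ is on the $j$-side (its subtree meets $V(T_j)$ at $j'$).

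The remaining step is to show that this sidedness is constant along any path in $G-S$. If $w w'$ is an edge of $G$ with $w,w'\notin S$, the edge axiom yields a node $a$ with $w,w'\in B_a$, so $a\in\mathcal{T}_w\cap\mathcal{T}_{w'}$; since $V(T_i)$ and $V(T_j)$ are disjoint, the sidedness lemma forces $w$ and $w'$ onto the same side. Applying this to the edges of a hypothetical $u$-$v$ path in $G-S$ (all of whose vertices avoid $S$) propagates the side from $u$ to $v$, forcing $u$ and $v$ onto the same side and contradicting the previous paragraph. Hence no such path exists and $S=B_i\cap B_j$ separates $u$ and $v$. The only delicate point — and the one I would write out most carefully — is the sidedness lemma, since it is exactly where the connectivity (consistency) axiom is used to convert ``the subtree meets both sides'' into ``the subtree contains the crossing edge $ij$''; everything else is bookkeeping along the path.
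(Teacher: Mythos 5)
Your proof is correct. Note, however, that the paper itself gives no argument for this observation at all: it is stated as a well-known fact with a pointer to Diestel's book, so there is no ``paper proof'' to deviate from. What you have written out is essentially the standard textbook argument behind that citation: the consistency axiom makes each $\mathcal{T}_w$ a subtree, so any $w\notin B_i\cap B_j$ lives entirely on one side of the edge $ij$ (your sidedness lemma, which is indeed the crux, since a subtree meeting both $V(T_i)$ and $V(T_j)$ must contain both $i$ and $j$), and the edge axiom then propagates sidedness along any path of $G-(B_i\cap B_j)$, separating $u$ from $v$. One small point worth a half-sentence in a written version: the paper's definition of a separator presupposes that the two vertices are distinct, and $u\neq v$ is not a hypothesis of the observation; it follows from your sidedness lemma itself, since $\mathcal{T}_u\subseteq V(T_i)$ and $\mathcal{T}_v\subseteq V(T_j)$ and these node sets are disjoint. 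With that remark added, your argument is complete.
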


We are now ready to prove the Beineke-Harary-Conjecture for a subclass of graphs
of treewidth at most $3$. 

\begin{lemma}
	\label{lem:bh_tw3_st_separated} Let $G$ be a graph of treewidth at most
	$3$, let $s,t\in V(G)$ be distinct and non-adjacent, and let $k\geq 0$ and
	$l\geq 1$ be integers. Further, let $(\mathcal{B},\mathcal{T})$ be a tree
	decomposition of width at most $3$ such that for no bag $B\in\mathcal{B}$ we
	have $s,t\in B$. If $(k,l)$ is a connectivity pair for $s$ and $t$ in $G$, then
	there exist $k+l$ edge-disjoint $s$-$t$ paths of which $k+1$ are internally
	disjoint.
\end{lemma}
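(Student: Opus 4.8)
The plan is to use the tree decomposition to bound the vertex connectivity $\kappa(s,t)$, to deduce from the connectivity-pair hypothesis that $k\le 2$, and then to invoke the results already available for $k\in\{0,1,2\}$. Since $s$ and $t$ are non-adjacent, $\kappa(s,t)$ is well defined. To bound it, I may first assume that no bag of $(\mathcal{B},\mathcal{T})$ is contained in an adjacent one: whenever $B_i\subseteq B_j$ for an edge $ij\in E(\mathcal{T})$, I contract $ij$ and keep the bag $B_j$; this only removes bags and leaves the width at most $3$, so the result is still a tree decomposition in which no bag contains both $s$ and $t$. The sets $\mathcal{T}_s=\{i\colon s\in B_i\}$ and $\mathcal{T}_t=\{i\colon t\in B_i\}$ are then disjoint subtrees of $\mathcal{T}$, so some edge $ij\in E(\mathcal{T})$ separates them, with $\mathcal{T}_s\subseteq V(T_i)$ and $\mathcal{T}_t\subseteq V(T_j)$. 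Since $s$ lies in a bag of $T_i$ but $s\notin B_j$, we have $s\notin B_i\cap B_j$, and symmetrically $t\notin B_i\cap B_j$; hence $B_i\cap B_j$ is an $s$-$t$ separator by Observation~\ref{obs:tw_separator}. As $B_i\not\subseteq B_j$, we get $\card{B_i\cap B_j}\le\card{B_i}-1\le 3$, so $\kappa(s,t)\le 3$.

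Next I would show $k\le 2$. Let $W^\ast$ be a minimum $s$-$t$ vertex-separator, so that $(W^\ast,\emptyset)$ is an $s$-$t$ disconnecting pair of order $\kappa(s,t)$ and size $0$. If $k\ge\kappa(s,t)$, this pair would have order at most $k$, size $0\le l$, and cardinality $\kappa(s,t)\le k<k+l$ (using $l\ge 1$), contradicting Property~\ref{prop2_connectivity_pair} of the connectivity pair $(k,l)$. Hence $k<\kappa(s,t)\le 3$, that is, $k\le 2$.

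Finally I would finish by a case distinction on $k$. For $k=0$, Observation~\ref{obs:bineke_harary_l=1_or_k=0}~\ref{obs:k=0} yields $l$ edge-disjoint $s$-$t$ paths, and a single one of them is the required internally disjoint path. For $k=1$, Corollary~\ref{cor::enomoto_kaneko_k=1} gives $l+1$ edge-disjoint paths, two of which are internally disjoint. For $k=2$ I split on $l$: if $l=2$, then Theorem~\ref{the:main_result} applies directly; if $l\ge 1$ and $l\ne 2$, then writing $2+l=3q+r$ with $1\le r\le 3$ one checks that $q+r>2$, so Theorem~\ref{thm:enomoto_kaneko} applies and produces $k+l$ edge-disjoint $s$-$t$ paths of which $k+1=3$ are internally disjoint. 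In every case we obtain the desired $k+l$ edge-disjoint paths with $k+1$ internally disjoint.

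The main obstacle is the first step: arranging that the separator extracted from the tree decomposition via Observation~\ref{obs:tw_separator} has size at most $3$ rather than $4$, which is why the reduction to adjacent non-nested bags, carried out so as to preserve the property that no bag contains both $s$ and $t$, is essential. Once $k\le 2$ is secured the conclusion is an appeal to previously established cases, the only delicate point being that for $k=2$ the Enomoto--Kaneko hypothesis $q+r>k$ fails exactly when $l=2$, which is precisely the gap filled by Theorem~\ref{the:main_result}.
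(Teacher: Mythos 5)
Your proof is correct and follows essentially the same route as the paper: extract an $s$-$t$ separator of size at most $3$ from the tree decomposition via Observation~\ref{obs:tw_separator}, deduce $k\leq 2$ from the connectivity-pair property together with $l\geq 1$, and then dispatch the cases via Observation~\ref{obs:bineke_harary_l=1_or_k=0}, Corollary~\ref{cor::enomoto_kaneko_k=1}, Theorem~\ref{the:main_result} (the genuinely new case $k=2$, $l=2$), and Theorem~\ref{thm:enomoto_kaneko}. Your write-up is in fact slightly more careful than the paper's: you justify its terse ``without loss of generality $B_i\neq B_j$'' by an explicit contraction of nested adjacent bags, and you explicitly cover the case $k=0$, $l>2$, which the paper's case list leaves implicit.
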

\begin{proof}
	Denote by $T_s$ ($T_t$) the subtree of $\mathcal{T}$ induced by all nodes
	corresponding to bags containing $s$ ($t$). As $V(T_s)\cap V(T_t)=\emptyset$,
	there exists an edge $ij\in E(\mathcal{T})$ that separates $V(T_s)$ from
	$V(T_t)$. Thus, by Observation~\ref{obs:tw_separator}, the set $B_i\cap
	B_j$ is an $s$-$t$ vertex-separator in~$G$. We may assume without loss of
	generality that
	$B_i\neq B_j$ and therefore get ${\card{B_i\cap B_j}\leq 3}$. The pair $(k,l)$ is
	a connectivity pair and $l\geq 1$, which implies $k\leq 2$. If $l=1$ the result
	follows from Observation~\ref{obs:bineke_harary_l=1_or_k=0}~\ref{obs:l=1}. If
	$l=2$ the
	result follows from Theorem~\ref{the:main_result}. Further, if $k=1$ the result
	follows form Corollary~\ref{cor::enomoto_kaneko_k=1}. Finally if $l > 2$,
	$k=2$, and $q,r$ integers such that $2+l=q\cdot 3 +r$ with $1\leq r\leq 3$, we
	get that $q+r>2=k$ and the desired result follows from
	Theorem~\ref{thm:enomoto_kaneko}.
\end{proof}

Note that in the proof of Lemma~\ref{lem:bh_tw3_st_separated}, we used our
main result, Theorem~\ref{the:main_result}, from the previous section. It is
worth noting, that no other result in this paper or another \emph{easy}
argument seems to be able to replace Theorem~\ref{the:main_result} in the proof
of the lemma. In fact, the proof of the theorem was the only piece missing for
proving the Beineke-Harary-Conjecture for graphs of treewidth at most $3$ for
some time.

We observe that the Beineke-Harary-Conjecture holds for graphs of treewidth~$1$:
If for a graph~$G$ the underlying simple graph is a tree, either $s$ and $t$ are
adjacent or there exists a vertex~$a\in V(G)\setminus\{s,t\}$ separating $s$ and
$t$. In both cases the only possible connectivity pairs for $s$ and $t$ are of
the form $(0,l)$ for $l\geq 1$ or $(1,0)$. The conjecture follows from
Observation~\ref{obs:bineke_harary_l=1_or_k=0}.

\begin{observation}\label{obs:tw_1}
	Let $G$ be a graph of treewidth $1$ and vertices $s,t\in V(G)$. Further, let 
	$(k,l)$ be a connectivity pair for $s$ and $t$ in $G$, with $k\geq 0$ and 
	$l\geq 1$. Then there exist $k+l$ edge-disjoint $s$-$t$ paths, $k+1$ of 
	which are internally disjoint.
\end{observation}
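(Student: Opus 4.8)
The plan is to show that the hypothesis $l\geq 1$ forces the first coordinate of the connectivity pair to be $k=0$, after which the statement collapses to the pure edge-connectivity case already handled by Observation~\ref{obs:bineke_harary_l=1_or_k=0}~\ref{obs:k=0}. I would start from the standard fact that a graph has treewidth at most $1$ exactly when its underlying simple graph is a forest (parallel edges are irrelevant to the treewidth). Since $(k,l)$ is a connectivity pair with $l\geq 1$, the vertices $s$ and $t$ cannot lie in distinct components: otherwise the empty pair $(\emptyset,\emptyset)$ would be an $s$-$t$ disconnecting pair of cardinality $0$, forcing the connectivity pair $(0,0)$ and contradicting $l\geq 1$. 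Hence $s$ and $t$ lie in a common tree component, and there is a unique simple $s$-$t$ path $s=v_0v_1\dots v_m=t$ in the underlying simple graph, each of whose edges may carry several parallels.

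The heart of the argument is to prove $k=0$. The key structural observation is that, in a forest, every $s$-$t$ path traverses exactly the vertex sequence $v_0,\dots,v_m$, since any detour would close a cycle in the underlying simple graph. Thus if $m\geq 2$, the single interior vertex $v_1$ separates $s$ from $t$, yielding an $s$-$t$ disconnecting pair of order $1$, size $0$, and cardinality $1$. Were $k\geq 1$, this pair would have order $1\leq k$, size $0\leq l$, and cardinality $1<k+l$ (using $k\geq 1$ and $l\geq 1$), directly contradicting Property~\ref{prop2_connectivity_pair}; hence $k=0$. If instead $m=1$, then $s$ and $t$ are adjacent and no vertex other than $s,t$ lies on any $s$-$t$ path, so every $s$-$t$ disconnecting pair consists of edges alone and in particular has order $0$; the same cardinality comparison again rules out $k\geq 1$, giving $k=0$.

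With $k=0$ secured, $(0,l)$ is the connectivity pair for $s$ and $t$, so Observation~\ref{obs:bineke_harary_l=1_or_k=0}~\ref{obs:k=0} supplies $l$ edge-disjoint $s$-$t$ paths. Since $k+l=l$ and $k+1=1$, it remains only to exhibit one internally disjoint path, which is vacuous for a single path, and the conclusion follows.

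I expect the only delicate point to be the bookkeeping against the definition of a connectivity pair: one must check that the small disconnecting pair produced in each case genuinely violates Property~\ref{prop2_connectivity_pair} (cardinality strictly below $k+l$, with order at most $k$ and size at most $l$) rather than merely one of its weaker reformulations. A tidy alternative would be to first invoke Lemma~\ref{lem:conjecture_non_adjacent_suffices} for the edge-deletion-closed class of graphs of treewidth at most $1$, reducing to the non-adjacent case and thereby eliminating the $m=1$ subcase entirely; everything else, including the forest characterization and the triviality of internal disjointness when $k=0$, is routine.
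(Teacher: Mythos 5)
Your proposal is correct and takes essentially the same route as the paper: the paper likewise argues that in a tree either $s$ and $t$ are adjacent or a single vertex separates them, so the only possible connectivity pairs are of the form $(0,l)$ or $(1,0)$, whence $l\geq 1$ forces $k=0$ and the claim reduces to Observation~\ref{obs:bineke_harary_l=1_or_k=0}. One cosmetic caveat: in your $m=1$ subcase the assertion that \emph{every} $s$-$t$ disconnecting pair has order $0$ is not literally true (superfluous vertices may always be added); what your cardinality comparison actually needs is only that $(\emptyset, E(s,t))$ is a disconnecting pair of order $0$ and size at most $l$, which holds since any order-$k$, size-$l$ disconnecting pair must contain all of $E(s,t)$ — or one can avoid the subcase entirely via Lemma~\ref{lem:conjecture_non_adjacent_suffices}, as you note.
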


In the next step we prove the conjecture for graphs of treewidth at
most~$2$. Although Theorem~\ref{thm:conjecture_on_tw_2} is implied by
Theorem~\ref{thm:conjecture_tw_3} and the proof could be included into the one
of Theorem~\ref{thm:conjecture_tw_3}, for better readability, we prove the
theorems separately. The structure of the two proofs is similar and therefore
the proof of Theorem~\ref{thm:conjecture_on_tw_2} can be regarded as a warm-up
for the one of Theorem~\ref{thm:conjecture_tw_3}. The following lemma comes in
handy to establish a case distinction in the proofs of the Beineke-Harary-Conjecture for graphs with small treewidth. It allows us to use
Lemma~\ref{lem:bh_tw3_st_separated} in the main proof.

\begin{lemma}\label{lem:tree_decomps_two_bags}
	Let $G$ be a graph of treewidth at most $k$ for some integer $k\geq 1$ and
	let $s,t\in V(G)$ be distinct and non-adjacent. Assume that every tree
	decomposition of width at most $k$ has a bag containing $s$ and $t$. Then,
	there exists a tree decomposition $D=(\mathcal{B},\mathcal{T})$ of width $k$,
	such that there is some $ij\in E(\mathcal{T})$ with $s,t\in B_i\cap B_j$,
	$\card{B_i\cap B_j}\leq k$ and $G-(B_i\cap B_j)$ not connected. In particular,
	$s$ and $t$ are contained in a vertex-separator in $G$ containing at most $k$
	vertices.
\end{lemma}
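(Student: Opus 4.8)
The plan is to start from a \emph{small} tree decomposition $(\mathcal{B},\mathcal{T})$ of width $\tw(G)\le k$, which exists by the remark preceding Observation~\ref{obs:tw_separator}, and to locate the desired edge $ij$ inside the part of $\mathcal{T}$ where both $s$ and $t$ live. First I would consider the subtrees $T_s$ and $T_t$ of $\mathcal{T}$ induced by the nodes whose bags contain $s$ and $t$, respectively; both are connected, and the assumption that every width-at-most-$k$ decomposition has a bag containing $s$ and $t$ forces $V(T_s)\cap V(T_t)\neq\emptyset$. If this intersection contains a tree edge $ij$, then $s,t\in B_i\cap B_j$ and the work is essentially done: since the decomposition is small, no bag is contained in another, so $B_i\not\subseteq B_j$ yields $B_i\cap B_j\subsetneq B_i$ and hence $\card{B_i\cap B_j}\le\card{B_i}-1\le k$, while choosing $u\in B_i\setminus B_j$ and $v\in B_j\setminus B_i$ and applying Observation~\ref{obs:tw_separator} shows that $B_i\cap B_j$ separates $u$ and $v$, so $G-(B_i\cap B_j)$ is disconnected.

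Thus the crux is to rule out the only remaining possibility, namely that $V(T_s)\cap V(T_t)$ is a single node $i$, i.e.\ that there is a \emph{unique} bag $B_i$ containing both $s$ and $t$. Here I would exploit the non-adjacency of $s$ and $t$. The plan is to split $B_i$ into two adjacent bags $B_p=B_i\setminus\{t\}$ and $B_q=B_i\setminus\{s\}$, replacing the node $i$ by an edge $pq$ and reattaching every former neighbour $j$ of $i$ to $p$ or to $q$ according to whether $B_j$ contains $s$ or $t$ (a choice that is well defined, since no neighbour's bag can contain both $s$ and $t$ without placing $j$ in $V(T_s)\cap V(T_t)=\{i\}$; neighbours whose bag contains neither may be attached arbitrarily). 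The observation making this operation legal is that the only pair of vertices of $B_i$ that gets split apart is $\{s,t\}$, which is a non-edge; every edge of $G$ inside $B_i$ therefore still lies entirely in $B_p$ or in $B_q$, and the connectedness condition for every vertex of $B_i\setminus\{s,t\}$ is preserved since all such vertices remain in both $B_p$ and $B_q$. The resulting decomposition still has width at most $k$, yet has no bag containing both $s$ and $t$, contradicting the hypothesis. This forces $V(T_s)\cap V(T_t)$ to contain an edge, returning us to the easy case above.

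I expect the main obstacle to be the careful verification that this split genuinely produces a valid tree decomposition — in particular that, after redistributing the neighbours of $i$, the nodes whose bags contain any fixed vertex still induce a subtree of the new tree. Once that bookkeeping is in place the remainder is routine: the ``in particular'' statement is immediate, since $C:=B_i\cap B_j$ then satisfies $s,t\in C$, $\card{C}\le k$, and $G-C$ disconnected, so $C$ is a vertex-separator of order at most $k$ containing both $s$ and $t$. One degenerate situation deserves a single sentence, namely when $\mathcal{T}$ consists of a single node; the very same split applies verbatim there, so no separate argument is needed.
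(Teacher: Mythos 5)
Your proposal is correct and follows essentially the same route as the paper's proof: it splits the unique bag containing both $s$ and $t$ into $B_i\setminus\{t\}$ and $B_i\setminus\{s\}$ (legal precisely because $st\notin E(G)$) to contradict the hypothesis, and then uses smallness of the decomposition together with Observation~\ref{obs:tw_separator} to get $\card{B_i\cap B_j}\le k$ and the disconnection of $G-(B_i\cap B_j)$. The only difference is organizational — the paper first proves that every width-$k$ decomposition must have two bags containing $s,t$ and then specializes to a small one, while you argue by cases directly on the small decomposition — which is immaterial.
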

\begin{proof}
	Let $G$ be a graph of treewidth $k$ and $s,t\in V(G)$ distinct and
	non-adjacent.
	Moreover, assume that every tree decomposition of width $k$ has a bag that 
	contains $s$ and~$t$.
	We claim that in this case every tree decomposition
	of width $k$ has at least two bags containing $s$ and $t$.
	
	Suppose that $\td$ is a tree decomposition of $G$ of treewidth $k$, such 
	that $s$ and $t$ share exactly one bag $B_i$. Let $j_1,\dots, j_r$ be the 
	neighbors of 
	$i$ in $\mathcal{T}$ whose bags contain $s$. We construct a new tree 
	decomposition by replacing the node~$i$ with two adjacent nodes $i_1$ and 
	$i_2$ with corresponding bags $B_{i_1}=B_i\setminus\{t\}$ and 
	$B_{i_2}=B_i\setminus\{s\}$, making $j_1,\dots, j_r$ adjacent to $i_1$ and 
	making the remaining neighbors of $i$ adjacent to $i_2$. As $s$ and $t$ are 
	not adjacent, the result is in fact a tree decomposition of width at most 
	$k$, in which no bag contains both $s$ and~$t$. This is a contradiction. Thus,
	we may assume that every tree decomposition of width at
	most $k$ has at least two bags containing $s$ and $t$.
	
	Consider a small tree decomposition $(\mathcal{B},\mathcal{T})$ of width $k$.
	By the arguments above we may assume that there is an edge $ij\in E(\mathcal{T})$ such that $s,t\in B_i\cap B_j$.
	Since $(\mathcal{B},\mathcal{T})$ is a small tree decomposition we have $B_i\nsubseteq B_j$ and $B_j\nsubseteq B_i$.
	Thus, there is a $u\in B_i\setminus B_j$ and a $v\in B_j\setminus B_i$ and by Observation~\ref{obs:tw_separator} the set $W=B_i\cap B_j$ separates $u$ and $v$.
	Since $W$ has at most $k$ vertices, $s,t\in W$ and $G-W$ is disconnected, the lemma follows.
\end{proof}

\begin{theorem}\label{thm:conjecture_on_tw_2}
	Let $G$ be a graph of treewidth at most $2$ with distinct vertices $s,t\in 
	V(G)$ and $k\geq 0$ and $l\geq 1$ integers.
	If $(k,l)$ is a connectivity pair for $s$ and $t$, then $G$ contains $k+l$ edge-disjoint $s$-$t$ paths of which $k+1$ are internally disjoint.
\end{theorem}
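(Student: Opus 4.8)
The plan is to combine the three lemmas of this section. By Lemma~\ref{lem:conjecture_non_adjacent_suffices} and the fact that graphs of treewidth at most $2$ are closed under edge deletion, I may assume $st\notin E(G)$. I then split into two cases according to Lemma~\ref{lem:tree_decomps_two_bags} with parameter $2$. If there is a tree decomposition of $G$ of width at most $2$ in which no bag contains both $s$ and $t$, the statement follows at once from Lemma~\ref{lem:bh_tw3_st_separated}, since such a decomposition is in particular a decomposition of width at most $3$. Otherwise every width-$\le 2$ tree decomposition has a bag containing $s$ and $t$, and Lemma~\ref{lem:tree_decomps_two_bags} produces a separator $W$ with $s,t\in W$ and $\card{W}\le 2$; as $W$ contains both vertices this forces $W=\{s,t\}$, so $G-\{s,t\}$ is disconnected. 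The rest of the argument treats this separated case.

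Let $C_1,\dots,C_m$ with $m\ge 2$ be the vertex sets of the components of $G-\{s,t\}$ and set $G_j=G[C_j\cup\{s,t\}]$. Since $s$ and $t$ are non-adjacent, each vertex of $V(G)\setminus\{s,t\}$ and each edge of $G$ lies in exactly one $G_j$, and every $s$-$t$ path of $G$ lies entirely within a single $G_j$ because its internal vertices stay in one component. Hence $(W,F)$ disconnects $s$ and $t$ in $G$ if and only if $(W\cap V(G_j),F\cap E(G_j))$ disconnects them in every $G_j$, and orders and sizes add up over $j$. I would then check that Lemma~\ref{lem:bh_tw3_st_separated} applies inside each $G_j$: as a subgraph of $G$ we have $\tw(G_j)\le 2$ and $s,t$ stay non-adjacent, and since $G_j-\{s,t\}=G[C_j]$ is connected, $\{s,t\}$ is not a separator of $G_j$, so no separator of $G_j$ on at most two vertices contains both $s$ and $t$. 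By the contrapositive of Lemma~\ref{lem:tree_decomps_two_bags} applied to $G_j$, there is a width-$\le 2$ tree decomposition of $G_j$ with no bag containing both $s$ and $t$, as required.

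The crux is the combination step. For a graph $H$ let $f_H(a)$ be the minimum size of an $s$-$t$ disconnecting pair of $H$ of order at most $a$; this is non-increasing, and a direct check against the definition shows that $(k,l)$ is a connectivity pair precisely when $f_H(k)=l$ and $f_H(k-1)>l$ (with $f_H(-1)=\infty$). The additivity observed above yields the infimal-convolution identity
\[
	f_G(a)=\min\Bigl\{\,\textstyle\sum_{j=1}^m f_{G_j}(a_j)\ :\ a_j\ge 0,\ \textstyle\sum_{j=1}^m a_j=a\,\Bigr\}.
\]
I would fix a split $(a_1,\dots,a_m)$ attaining $f_G(k)=l$ and set $k_j=a_j$, $l_j=f_{G_j}(a_j)$, so that $\sum_j k_j=k$ and $\sum_j l_j=l$. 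Each $(k_j,l_j)$ is a connectivity pair of $G_j$: if some $k_j\ge 1$ satisfied $f_{G_j}(k_j-1)=f_{G_j}(k_j)$, lowering $a_j$ by one would give a split of total order $k-1$ and total cost $l$, contradicting $f_G(k-1)>l$. Now I apply Lemma~\ref{lem:bh_tw3_st_separated} in each component with $l_j\ge 1$, obtaining $k_j+l_j$ edge-disjoint paths of which $k_j+1$ are internally disjoint, and Observation~\ref{obs:bineke_harary_l=1_or_k=0}~\ref{obs:l=0} in each component with $l_j=0$, obtaining $k_j=k_j+l_j$ internally disjoint paths. Their union is a family of $\sum_j(k_j+l_j)=k+l$ edge-disjoint $s$-$t$ paths, among which $k+\card{\{j:l_j\ge 1\}}$ are internally disjoint, since paths from different components meet only in $s$ and $t$. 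Because $\sum_j l_j=l\ge 1$ at least one $l_j$ is positive, so $k+1$ of these $k+l$ edge-disjoint paths are internally disjoint. The main obstacle is exactly this bookkeeping: arranging that the connectivity pair of $G$ decomposes into genuine connectivity pairs of the components with the totals pinned at $k$ and $l$, and locating the decisive $(k+1)$-st internally disjoint path in a component with $l_j\ge 1$, which is guaranteed only by the hypothesis $l\ge 1$.
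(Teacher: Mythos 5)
Your proof is correct, and while it rests on the same three pillars as the paper's argument (Lemma~\ref{lem:conjecture_non_adjacent_suffices}, Lemma~\ref{lem:tree_decomps_two_bags}, Lemma~\ref{lem:bh_tw3_st_separated}), it organizes the separated case genuinely differently. The paper argues by induction on $\card{V(G)}$: it splits off a \emph{single} component $C$ of $G-\{s,t\}$, forms $G_1=G[C\cup\{s,t\}]$ and $G_2=G-C$, restricts one optimal disconnecting pair to the two pieces, and invokes the induction hypothesis on $G_2$ --- recursion is needed because $\{s,t\}$ may still be a separator of $G_2$. You dispense with induction by cutting along \emph{all} components $C_1,\dots,C_m$ simultaneously and observing that $\{s,t\}$ can never separate a single piece $G_j=G[C_j\cup\{s,t\}]$ (since $G_j-\{s,t\}=G[C_j]$ is connected); the contrapositive of Lemma~\ref{lem:tree_decomps_two_bags} then places every piece directly in the case covered by Lemma~\ref{lem:bh_tw3_st_separated}, so no recursion occurs. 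A second difference: the paper merely asserts that the restrictions $(k_i,l_i)$ of an optimal disconnecting pair are connectivity pairs of the pieces, whereas your exchange argument with $f_{G_j}$ (using $f_G(k-1)>l$) actually proves the corresponding statement for an optimal split of the order budget; the two formulations are equivalent, since attaining splits correspond exactly to restrictions of optimal pairs, and your write-up supplies bookkeeping the paper leaves implicit. What each approach buys: yours is non-recursive and self-contained, with the additivity of mixed separators across the cut stated cleanly as an infimal convolution; the paper's inductive template is terser and is the one that carries over to Theorem~\ref{thm:conjecture_tw_3}, where the separator $\{s,t,a\}$ forces the pieces to be augmented by artificial parallel edges and a one-shot decomposition of your kind would no longer go through.
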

\begin{proof}
	Let $G$ be a graph, $s,t\in V(G)$ be distinct vertices and let $(k,l)$ be a
	connectivity pair for $s$ and $t$ with $l\geq1$. If $\tw(G)=1$ the result
	follows from Observation~\ref{obs:tw_1}. By
	Lemma~\ref{lem:conjecture_non_adjacent_suffices} we may assume that $s$ and	$t$ are not adjacent in~$G$.

	We prove the theorem by induction on the number of vertices $\card{V(G)}$. If
$|V(G)|\leq 3$, as $s$ and $t$ are not adjacent, there always exists a tree
decomposition of $G$ in which no bag contains both, $s$ and $t$. The claim
follows from Lemma~\ref{lem:bh_tw3_st_separated}. So assume the claim holds for
all graphs with less than $|V(G)|$ vertices.
	
If there exists a tree decomposition of $G$ in which no bag contains both $s$
and $t$, the claim is again implied by Lemma~\ref{lem:bh_tw3_st_separated}.
Otherwise, by Lemma~\ref{lem:tree_decomps_two_bags}, the set $\{s,t\}$ is a
vertex-separator in~$G$. Let $C$ be a component of $G-\{s,t\}$ and denote the
graph induced by $C\cup\{s,t\}$ by $G_1$. Let $G_2=G - C$. Note that
$|V(G_i)|<|V(G)|$ for $i\in\{1,2\}$ and $E(G_1)\cap E(G_2)=\emptyset$. Consider
some $s$-$t$ disconnecting pair $(W,F)$ of order~$k$ and size~$l$ in $G$. For
$i\in\{1,2\}$, the pair induces an $s$-$t$ disconnecting pair $(W_i,F_i)$ in
$G_i$, with $W_i=W\cap V(G_i)$ and $F_i=F\cap E(G_i)$. Let $k_i=\card{W_i}$ and
$l_i=\card{F_i}$. Then, $(k_i,l_i)$ is a connectivity pair for $s$ and $t$ in
$G_i$. Further, $k_1+k_2=k$, $l_1+l_2=l$ and without loss of generality we may
assume $l_2\geq 1$ since $E_G(s,t)\subseteq E(G_2)$. Thus, in $G_1$ there exist $k_1+l_1$ edge-disjoint paths of
which $k_1$ are internally disjoint. Note that we cannot assume that there
exist $k_1+1$ internally disjoint paths as $l_1$ may equal $0$. In $G_2$, by
induction, we get $k_2+l_2$ edge-disjoint paths of which $k_2+1$ are internally
disjoint. For any two paths $P_1$ in $G_1$ and $P_2$ in $G_2$ it holds
that $P_1$ and $P_2$ are internally disjoint in $G$. Thus, there exist
$k_1+k_2+l_1+l_2=k+l$ edge-disjoint $s$-$t$ paths in $G$ of which
$k_1+k_2+1=k+1$ are internally disjoint.
\end{proof}

Finally, we turn to the proof of the Beineke-Harary-Conjecture for graphs of
treewidth at most $3$. The structure of the proof is very similar to the one in
Theorem~\ref{thm:conjecture_on_tw_2}. It is quite possible that this
structure also generalizes to graphs with larger treewidth. The main reason why we
do not prove the conjecture for graphs of treewidth at most $4$ (or even larger)
is, that in order for Lemma~\ref{lem:bh_tw3_st_separated} to hold for this
class of graphs, we would have to prove the conjecture for $l=3$ or find another
way of proving this. The idea of the proof is to divide the graph at some
separator containing $s$ and $t$ and use paths found in the resulting graphs by
induction. In contrast to Theorem~\ref{thm:conjecture_on_tw_2}, a separator
containing $s$ and $t$ may now also contain a third vertex $a$. Thus, it is
possible that some of the searched path actually cross at this vertex. To
address this issue we introduce artificial edges that simulate part of the paths
in the other component.

\begin{theorem}
	\label{thm:conjecture_tw_3} Let $G$ be a graph of treewidth  at most $3$.
	Let $s,t\in V(G)$ be two distinct vertices and $k\geq 0$, $l\geq 1$ integers.
	If $(k,l)$ is a connectivity pair for $s$ and $t$, then $G$ contains $k+l$
	edge-disjoint $s$-$t$ paths of which $k+1$ are internally disjoint.
\end{theorem}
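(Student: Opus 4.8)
The plan is to copy the architecture of the proof of Theorem~\ref{thm:conjecture_on_tw_2}, replacing the size-$2$ separator $\{s,t\}$ by a separator of size at most $3$ and treating the extra vertex by hand. Since the class of graphs of treewidth at most $3$ is closed under edge deletion, Lemma~\ref{lem:conjecture_non_adjacent_suffices} lets me assume that $s$ and $t$ are non-adjacent, and if $\tw(G)\le 2$ the statement is already Theorem~\ref{thm:conjecture_on_tw_2}, so I may assume $\tw(G)=3$. I then induct on $\card{V(G)}$. If some width-$3$ tree decomposition of $G$ has no bag containing both $s$ and $t$, then Lemma~\ref{lem:bh_tw3_st_separated} finishes the case (this also disposes of the small base cases). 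Otherwise Lemma~\ref{lem:tree_decomps_two_bags}, applied with $k=3$, produces a vertex-separator $W$ with $s,t\in W$, $\card{W}\le 3$ and $G-W$ disconnected, so that either $W=\{s,t\}$ or $W=\{s,t,a\}$ for a third vertex $a$.

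When $W=\{s,t\}$ the argument is essentially that of Theorem~\ref{thm:conjecture_on_tw_2}: pick a component $C$ of $G-\{s,t\}$, set $G_1=G[C\cup\{s,t\}]$ and $G_2=G-C$, and restrict a realizing $s$-$t$ disconnecting pair of order $k$ and size $l$ to each side to obtain connectivity pairs $(k_i,l_i)$ with $k_1+k_2=k$ and $l_1+l_2=l$. Since $l\ge 1$ I may relabel so that $l_2\ge 1$; then induction on the strictly smaller $G_2$ gives $k_2+1$ internally disjoint paths, while $G_1$ contributes $k_1$ internally disjoint paths by induction (if $l_1\ge 1$) or by Observation~\ref{obs:bineke_harary_l=1_or_k=0} (if $l_1=0$). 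Because $s$ and $t$ are non-adjacent, $G_1$ and $G_2$ share no edge and only the vertices $s,t$, so the two systems are automatically edge-disjoint and their union is internally disjoint, yielding $k+l$ edge-disjoint paths of which $k_1+(k_2+1)=k+1$ are internally disjoint.

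The genuinely new case is $W=\{s,t,a\}$. I again fix a component $C$ of $G-\{s,t,a\}$ and set $G_1=G[C\cup\{s,t,a\}]$, $G_2=G-C$; now the two sides share the internal vertex $a$ (and possibly the edges $sa,ta$), so an $s$-$t$ path of $G$ may cross from $G_1$ to $G_2$ at $a$, and such a path is destroyed by a naive restriction. Since $W=B_i\cap B_j\supseteq\{s,t,a\}$ and each bag has at most four vertices, the three vertices $s,a,t$ lie in a common bag, so I may add artificial edges among $\{s,a,t\}$ without raising the treewidth above $3$. The idea is to absorb the far side into such edges: inside $G_2$ I reserve a maximum edge-disjoint system of $s$-$t$, $s$-$a$ and $a$-$t$ paths capturing the connectivity $G_2$ offers between the three terminals, record it as parallel artificial $st$-, $sa$- and $at$-edges added to $G_1$, and call the resulting graph $G_1^{+}$. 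A path of $G_1^{+}$ running $s\rightsquigarrow a$ in $G_1$ and then leaving along an artificial $at$-edge models exactly a path of $G$ crossing at $a$, while an artificial $st$-edge models a path lying wholly in $G_2$. I would then verify that $(k,l)$ is again a connectivity pair for $s$ and $t$ in $G_1^{+}$, apply the induction hypothesis (the separator is proper, so $G_1^{+}$ has strictly fewer vertices and treewidth at most $3$) to obtain $k+l$ edge-disjoint paths of which $k+1$ are internally disjoint, and finally replace every artificial edge by the distinct reserved $G_2$-path; as $G_1$ and $G_2$ meet only in $\{s,a,t\}$ and the reserved paths are mutually edge-disjoint, this expansion preserves edge-disjointness and only adds internal vertices in $G_2$, so internal disjointness is preserved.

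The main obstacle is precisely this bookkeeping at $a$, and it splits into two points. First, the artificial edges must reproduce the \emph{mixed} (vertex-and-edge) disconnecting-pair structure of $G_2$ relative to $\{s,a,t\}$ faithfully, so that no cheaper $s$-$t$ disconnecting pair of order at most $k$ and size at most $l$ is created in $G_1^{+}$ and the realizing one survives; because only three terminals are involved this is a finite case analysis, but it is more delicate than a pure edge-cut argument since the vertex $a$ may or may not be charged to the order $k$. Second, internal disjointness forces at most one of the $k+1$ distinguished paths to pass through $a$, so I must guarantee that at most one distinguished path of $G_1^{+}$ uses the vertex $a$ or an artificial $sa$/$at$-edge, while the remaining edge-disjoint paths may still reuse $a$ along edge-disjoint $G_2$-detours. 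Controlling this transit capacity at $a$ is exactly the step that goes beyond the treewidth-$2$ argument, where the separator contained no internal vertex to be shared.
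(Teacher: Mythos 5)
Your outer skeleton (non-adjacency via Lemma~\ref{lem:conjecture_non_adjacent_suffices}, induction on $\card{V(G)}$, Lemma~\ref{lem:bh_tw3_st_separated} when some width-$3$ decomposition has no bag containing both $s$ and $t$, Lemma~\ref{lem:tree_decomps_two_bags} otherwise, and the $\{s,t\}$-separator case handled as in Theorem~\ref{thm:conjecture_on_tw_2}) coincides with the paper's proof. The gap is in the core case $W=\{s,t,a\}$: you collapse \emph{all} of $G_2$ into parallel artificial edges among $\{s,a,t\}$ and apply induction only to $G_1^{+}$. No multigraph on the three terminals can faithfully encode the \emph{mixed} cut structure of $G_2$, because a disconnecting pair of $G$ may charge internal vertices of $G_2$ to the order budget $k$, while deleting parallel artificial edges is always charged to the size budget $l$. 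The fatal direction is under-representation. A ``maximum edge-disjoint system of $s$-$t$, $s$-$a$ and $a$-$t$ paths'' is not canonical (the three packings trade off against one another), so the chosen system may, for instance, contain no $s$-$t$ path avoiding $a$ even though $G_2-a$ still connects $s$ to $t$; then any set $\{a\}\cup X$ with $\card{X}\leq k-1$ that separates $s$ from $t$ in $G_1$ becomes a vertex cut of size at most $k$ in $G_1^{+}$, so no connectivity pair with first coordinate $k$ exists in $G_1^{+}$ and induction cannot return $k+1$ internally disjoint paths --- even though no such cheap cut exists in $G$. Thus the step ``verify that $(k,l)$ is again a connectivity pair in $G_1^{+}$'' is exactly where the argument breaks, and it is not a finite case analysis: the gadget would have to be correct against every mixed cut of $G$, including those whose vertex part lies in the interior of $G_2$.

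There is a second, independent gap: expansion does not preserve internal disjointness. The reserved $G_2$-paths are only edge-disjoint, so two of the $k+1$ internally disjoint paths of $G_1^{+}$ that both use artificial edges (for example two parallel artificial $st$-edges, which are internally disjoint as paths of $G_1^{+}$) expand to paths that may share internal vertices inside $G_2$; and since $G_1$ alone can have far fewer than $k+1$ internally disjoint $s$-$t$ paths, you cannot arrange that at most one distinguished path uses an artificial edge. The paper circumvents both problems by never collapsing a side: it applies the induction hypothesis to \emph{both} graphs $H_1$ and $H_2$, where $H_i$ is $G_i$ plus $q$ parallel edges joining $a$ to a single terminal ($s$ or $t$, the choice governed by a case distinction on whether $a$ lies in some $s$-$t$ disconnecting pair of order $k$ and size $l$), and $q$ is defined not by a path packing but by connectivity-pair arithmetic --- the unique integer with $(k_1+1,\,l_1-q)$ a connectivity pair in $G_1$. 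The two path families are then glued at $a$, and a counting argument (at most one glued pair occurs among the internally disjoint paths) gives the bounds $k+l$ and $k+1$. Keeping both sides as genuine graphs is what keeps interior-vertex cuts representable and confines the internal-disjointness issue to the single vertex $a$, which is precisely the control your one-sided reduction loses.
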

\begin{proof}
	Let $G$ be a graph, $s,t\in V(G)$ distinct vertices and let $(k,l)$ be a
	connectivity pair for $s$ and $t$ with $l\geq 1$. If $\tw(G)\leq 2$ the result
	follows from Theorem~\ref{thm:conjecture_on_tw_2}. By
	Lemma~\ref{lem:conjecture_non_adjacent_suffices} we may assume that $s$ and
	$t$ are not adjacent in~$G$.
	
	As in the proof of Theorem~\ref{thm:conjecture_on_tw_2}, we do the proof by
	induction on the number of vertices. If $\card{V(G)}\leq 4$, there exists a tree
	decomposition of $G$ of width $3$ in which no bag contains both, $s$ and $t$,
	and the claim is implied by Lemma~\ref{lem:bh_tw3_st_separated}. So assume
	the claim holds for all graphs of treewidth at most $3$ and $\card{V(G)}\leq 4$.
	
	If there exists a tree decomposition of $G$ of width $3$ in which no
	bag contains both, $s$ and $t$, the claim is again implied by
	Lemma~\ref{lem:bh_tw3_st_separated}. Otherwise, by
	Lemma~\ref{lem:tree_decomps_two_bags}, there exists a tree decomposition $D=(\mathcal{B},\mathcal{T})$ containing an edge $xy\in\mathcal{T}$ such that for $B\coloneqq B_x\cap B_y$ it holds that $s,t\in B$, $\card{B}\leq 3$, and $G-B$ is not connected. If $B=\set{s,t}$ is a vertex-separator, we may simply
	repeat the arguments in the proof of Theorem~\ref{thm:conjecture_on_tw_2}.
	So assume there is a vertex~$a\in V(G)\setminus\set{s,t}$, such that $B=\set{s,t,a}$.
	Let $C$ be a component of $G-B$, denote by $G_1$ the graph induced by $V(C)\cup
	B$ and let $G_2$ be the graph $G-V(C)-E_G(s,a)-E_G(a,t)$,
	\textit{cf.}~Figure~\ref{fig:cp_construction_tw3}.
	
	\begin{figure}[ht]
		\centering
		\begin{minipage}[t]{.485\textwidth}%
			\centering
			\begin{tikzpicture}[node distance=\nodedistance]
			\clip(-.75\nodedistance,2.5\nodedistance) rectangle(5.75\nodedistance, -2.75\nodedistance);
			\node[vertex] (s) {};
			\labelLeft{s}{$s$}
			\node[vertex, right of=s, xshift=4\nodedistance] (t) {};
			\labelRight{t}{$t$}
			\node[vertex, right of=s, xshift=1.5\nodedistance, color5, fill=color5] (a) {};
			\node[above of=a, yshift=-.6\nodedistance] {$a$};

			\node[above of=s, yshift=.7\nodedistance, xshift=.5\nodedistance] (1a) {};
			\node[right of=1a, xshift=.5\nodedistance] (1b) {};
			\node[above of=t, yshift=.7\nodedistance, xshift=-.5\nodedistance] (1c) {};
			\begin{pgfonlayer}{background}
			\fill[brown!20] \convexpath{s,1a, 1b, 1c, t, a}{.3\nodedistance};
			\end{pgfonlayer}
			
			\node[below of=s, yshift=.3\nodedistance] (2a) {};
			\node[below of=s, yshift=.3\nodedistance, xshift=.3\nodedistance] (2aa) {};
			\node[below of=a, yshift=.3\nodedistance, xshift=-.3\nodedistance] (2ab) {};
			\node[below of=a, yshift=.3\nodedistance, xshift=.3\nodedistance] (2bb) {};
			\node[below of=t, yshift=.3\nodedistance, xshift=-.3\nodedistance] (2bc) {};
			\node[below of=a, yshift=.3\nodedistance] (2b) {};
			\node[below of=t, yshift=.3\nodedistance] (2c) {};
			\node[below of=2a, yshift=-.3\nodedistance, xshift=.5\nodedistance] (3a) {};
			\node[below of=2c, yshift=-.3\nodedistance, xshift=-.5\nodedistance] (3b) {};
			\begin{pgfonlayer}{background}
			\fill[black!20] \convexpath{s,2aa, 2ab, a, 2bb, 2bc, t, 2c, 3b, 3a, 2a, s}{.3\nodedistance};
			\end{pgfonlayer}
			
			\node[vertex, above of=s, xshift=.4\nodedistance] (sx1) {};
			\node[vertex, above of=s, yshift=-.4\nodedistance, xshift=.8\nodedistance] (sx2) {};
			\node[vertex, above of=t, xshift=-.4\nodedistance] (tx1) {};
			\node[vertex, above of=t, yshift=-.4\nodedistance, xshift=-.8\nodedistance] (tx2) {};
			\node[above of=sx1, xshift=.1\nodedistance, yshift=-.3\nodedistance] (y1) {$G_1$};
			\node[vertex, above of=sx1, xshift=.4\nodedistance, yshift=-.7\nodedistance] (ry1) {};
			\node[vertex, right of=y1] (y2) {};
			\node[vertex, right of=sx2, xshift=-.2\nodedistance, yshift=-.2\nodedistance] (y3) {};
			\node[vertex, above of=a, xshift=-.5\nodedistance, yshift=-.2\nodedistance] (z1) {};
			\node[vertex, above of=a, xshift=.5\nodedistance] (z2) {};
			\node[vertex, right of=sx1, yshift=.2\nodedistance] (y4) {};
			\node[vertex, right of=y2, yshift=.1\nodedistance, color5, fill=color5] (s1) {};
			\node[vertex, right of=y2, yshift=-.3\nodedistance, xshift=-.2\nodedistance, color5, fill=color5] (s2) {};
			\node[vertex, below of=tx2, xshift=-.4\nodedistance, yshift=.7\nodedistance] (q1) {};
			\node[vertex, above of=q1, xshift=-.2\nodedistance, yshift=-.5\nodedistance] (q2) {};
			\node[vertex, above of=q2, xshift=.6\nodedistance, yshift=-.2\nodedistance] (q3) {};
			\node[vertex, left of =q3, xshift=.4\nodedistance, yshift=.2\nodedistance] (q4) {};
			\node[vertex, below of=q4, xshift=.2\nodedistance, yshift=.5\nodedistance] (q5) {};
			
			\draw[edge, color5] (z1) -- (z2);
			\draw[edge, dotted] (sx2)--(s2)(t) -- (tx1) (t) -- (tx2)(a)--(t) (s) -- (sx1) (s) -- (sx2)(sx1) -- (sx2) (sx2) -- (z1) (sx2) -- (y3) (y3) -- (z1) (sx1)--(ry1)(ry1)--(y4)(y4)--(y2) (s2)--(y4) (y2) --(s1) (y2) -- (s2) (y3)--(a) (s) -- (y3) (t)--(q1) (q1)--(a) (q1)--(q2) (q2)--(z2) (s2)--(z2) (s2)--(z1) (q2)--(q5) (q5)--(q4) (q5)--(q3) (q5)--(s2) (q5)--(s1) (s1)--(q4) (tx1) --(q3) (tx1) --(q5) (tx2)--(q1) (tx2)--(q5) (q3)--(q4)(a)--(q2);
			
			\node[vertex, below of=s, yshift=-.3\nodedistance, xshift=.6\nodedistance] (xsx1) {};
			\node[vertex, below of=s, yshift=.2\nodedistance, xshift=.8\nodedistance] (xsx2) {};
			\node[vertex, below of=t, xshift=-.2\nodedistance] (xtx1) {};
			\node[vertex, below of=t, yshift=.3\nodedistance, xshift=-.7\nodedistance] (xtx2) {};
			\node[vertex,below of=s, xshift=-0\nodedistance] (xstar) {};
			\node[vertex, below of=xsx1, xshift=-.1\nodedistance, yshift=.4\nodedistance] (xy1) {};
			\node[vertex, right of=xy1] (xy2) {};
			\node[vertex, right of=xsx2, xshift=-.2\nodedistance, yshift=-.2\nodedistance] (xy3) {};
			\node[vertex, below of=a, yshift=.3\nodedistance, xshift=-.4\nodedistance] (xz1) {};
			\node[vertex, below of=a, yshift=.3\nodedistance, xshift=.4\nodedistance, color5, fill=color5] (xz2) {};
			\node[vertex, right of=xsx1, yshift=-.2\nodedistance] (xy4) {};
			\node[vertex, right of=xy2, yshift=.1\nodedistance, color5, fill=color5] (xs1) {};
			\node[vertex, right of=xy2, yshift=.6\nodedistance, xshift=-.4\nodedistance] (xs2) {};
			\node[vertex, above of=xtx2, xshift=-.5\nodedistance, yshift=-.9\nodedistance] (p1) {};
			\node[vertex, below of=p1, xshift=-.2\nodedistance, yshift=.4\nodedistance] (p2) {};
			\node[below of=p2, xshift=.8\nodedistance, yshift=.2\nodedistance] (p3) {$G_2$};
			\node[vertex, left of =p3, xshift=.0\nodedistance, yshift=0\nodedistance] (p4) {};
			\node[vertex, above of=p4, xshift=.8\nodedistance, yshift=-.4\nodedistance] (p5) {};
			
			\draw[edge, color5] (xs2)--(p2);
			\draw[edge, dotted] (xz1)--(xz2)(xsx1)--(xy4)(xs2)--(xy3)(xs1)--(p4) (t)--(p1)(xstar)--(s)(xstar)--(xy1)(s)--(xz1) (s)--(xsx1)(s)--(xsx2)(xsx1)--(xy1)(xsx2)--(xz1)(xz1)--(a)(xz2)--(a)(xy1)--(xy2)(xy2)--(xy4)(xy2)--(xs1)(xy2)--(xs2)(xy4)--(xs2)(xs2)--(xz2)(xs2)--(xz1)(xy3)--(xz1)(xy3)--(xsx2)(xy4)--(xsx2)(xs1)--(xs2)(p2)--(p4)(p4)--(p5)(p2)--(p5)(xtx2)--(p1)(xz2)--(p1)(xz2)--(p2)(xtx1)--(p5)(xtx2)--(p2)(xtx1)--(t)(t)--(xtx2)(xtx1)--(xtx2)(xs1)--(p2);
			\end{tikzpicture}
		\end{minipage}
		\hspace{0.005\textwidth}
		\begin{minipage}[t]{.485\textwidth}%
			\centering
			\begin{tikzpicture}[node distance=\nodedistance]
			\clip(-.75\nodedistance,2.5\nodedistance) rectangle(5.75\nodedistance, -2.75\nodedistance);
			\node[vertex, yshift=.5\nodedistance] (s) {};
			\labelLeft{s}{$s$}
			\node[vertex, right of=s, xshift=4\nodedistance] (t) {};
			\labelRight{t}{$t$}
			\node[vertex, right of=s, xshift=1.5\nodedistance] (a) {};
			\node[above of=a, yshift=-.75\nodedistance] {$a$};

			\node[above of=s, yshift=.7\nodedistance, xshift=.5\nodedistance] (1a) {};
			\node[right of=1a, xshift=.5\nodedistance] (1b) {};
			\node[above of=t, yshift=.7\nodedistance, xshift=-.5\nodedistance] (1c) {};
			\begin{pgfonlayer}{background}
			\fill[brown!20] \convexpath{s,1a, 1b, 1c, t, a}{.3\nodedistance};
			\end{pgfonlayer}
			
			\node[vertex, above of=s, xshift=.4\nodedistance, color5, fill=color5] (sx1) {};
			\node[vertex, above of=s, yshift=-.4\nodedistance, xshift=.8\nodedistance, color5, fill=color5] (sx2) {};
			\node[vertex, above of=t, xshift=-.4\nodedistance] (tx1) {};
			\node[vertex, above of=t, yshift=-.4\nodedistance, xshift=-.8\nodedistance] (tx2) {};
			\node[above of=sx1, xshift=.1\nodedistance, yshift=-.3\nodedistance] (y1) {$H_1$};
			\node[vertex, above of=sx1, xshift=.4\nodedistance, yshift=-.7\nodedistance] (ry1) {};
			\node[vertex, right of=y1] (y2) {};
			\node[vertex, right of=sx2, xshift=-.2\nodedistance, yshift=-.2\nodedistance, color5, fill=color5] (y3) {};
			\node[vertex, above of=a, xshift=-.5\nodedistance, yshift=-.2\nodedistance] (z1) {};
			\node[vertex, above of=a, xshift=.5\nodedistance] (z2) {};
			\node[vertex, right of=sx1, yshift=.2\nodedistance] (y4) {};
			\node[vertex, right of=y2, yshift=.1\nodedistance] (s1) {};
			\node[vertex, right of=y2, yshift=-.3\nodedistance, xshift=-.2\nodedistance] (s2) {};
			\node[vertex, below of=tx2, xshift=-.4\nodedistance, yshift=.7\nodedistance] (q1) {};
			\node[vertex, above of=q1, xshift=-.2\nodedistance, yshift=-.5\nodedistance] (q2) {};
			\node[vertex, above of=q2, xshift=.6\nodedistance, yshift=-.2\nodedistance] (q3) {};
			\node[vertex, left of =q3, xshift=.4\nodedistance, yshift=.2\nodedistance] (q4) {};
			\node[vertex, below of=q4, xshift=.2\nodedistance, yshift=.5\nodedistance] (q5) {};
			
			\draw[edge] (z1) -- (z2);
			\draw[edge, color5] (s)--(a);
			\draw[edge, dotted] (sx2)--(s2)(t) -- (tx1) (t) -- (tx2)(a)--(t) (s) -- (sx1) (s) -- (sx2)(sx1) -- (sx2) (sx2) -- (z1) (sx2) -- (y3) (y3) -- (z1) (sx1)--(ry1)(ry1)--(y4)(y4)--(y2) (s2)--(y4) (y2) --(s1) (y2) -- (s2) (y3)--(a) (s) -- (y3) (t)--(q1) (q1)--(a) (q1)--(q2) (q2)--(z2) (s2)--(z2) (s2)--(z1) (q2)--(q5) (q5)--(q4) (q5)--(q3) (q5)--(s2) (q5)--(s1) (s1)--(q4) (tx1) --(q3) (tx1) --(q5) (tx2)--(q1) (tx2)--(q5) (q3)--(q4)(a)--(q2);

			\node[vertex, below of=s] (snew) {};
			\labelLeft{snew}{$s$}
			\node[vertex, below of=a] (anew) {};
			\node[above of=anew, yshift=-.6\nodedistance] {$a$};
			\node[vertex, below of=t] (tnew) {};
			\labelRight{tnew}{$t$}
			\node[vertex, below of=snew, yshift=-.3\nodedistance, xshift=.6\nodedistance] (xsx1) {};
			\node[vertex, below of=snew, yshift=.2\nodedistance, xshift=.8\nodedistance] (xsx2) {};
			\node[vertex, below of=tnew, xshift=-.2\nodedistance] (xtx1) {};
			\node[vertex, below of=tnew, yshift=.3\nodedistance, xshift=-.7\nodedistance] (xtx2) {};
			\node[vertex,below of=snew, xshift=-0\nodedistance] (xstar) {};
			\node[vertex, below of=xsx1, xshift=-.1\nodedistance, yshift=.4\nodedistance] (xy1) {};
			\node[vertex, right of=xy1] (xy2) {};
			\node[vertex, right of=xsx2, xshift=-.2\nodedistance, yshift=-.2\nodedistance] (xy3) {};
			\node[vertex, below of=anew, yshift=.3\nodedistance, xshift=-.4\nodedistance] (xz1) {};
			\node[vertex, below of=anew, yshift=.3\nodedistance, xshift=.4\nodedistance, color5, fill=color5] (xz2) {};
			\node[vertex, right of=xsx1, yshift=-.2\nodedistance] (xy4) {};
			\node[vertex, right of=xy2, yshift=.1\nodedistance, color5, fill=color5] (xs1) {};
			\node[vertex, right of=xy2, yshift=.6\nodedistance, xshift=-.4\nodedistance] (xs2) {};
			\node[vertex, above of=xtx2, xshift=-.5\nodedistance, yshift=-.9\nodedistance] (p1) {};
			\node[vertex, below of=p1, xshift=-.2\nodedistance, yshift=.4\nodedistance] (p2) {};
			\node[below of=p2, xshift=.8\nodedistance, yshift=.2\nodedistance] (p3) {$H_2$};
			\node[vertex, left of =p3, xshift=.0\nodedistance, yshift=0\nodedistance] (p4) {};
			\node[vertex, above of=p4, xshift=.8\nodedistance, yshift=-.4\nodedistance] (p5) {};
			
			\draw[edge, color5] (xs2)--(p2);
			\draw[edge, color5] (anew) --(tnew);
			\draw[edge, dotted] (xz1)--(xz2)(xsx1)--(xy4)(xs2)--(xy3)(xs1)--(p4) (tnew)--(p1)(xstar)--(snew)(xstar)--(xy1)(snew)--(xz1) (snew)--(xsx1)(snew)--(xsx2)(xsx1)--(xy1)(xsx2)--(xz1)(xz1)--(anew)(xz2)--(anew)(xy1)--(xy2)(xy2)--(xy4)(xy2)--(xs1)(xy2)--(xs2)(xy4)--(xs2)(xs2)--(xz2)(xs2)--(xz1)(xy3)--(xz1)(xy3)--(xsx2)(xy4)--(xsx2)(xs1)--(xs2)(p2)--(p4)(p4)--(p5)(p2)--(p5)(xtx2)--(p1)(xz2)--(p1)(xz2)--(p2)(xtx1)--(p5)(xtx2)--(p2)(xtx1)--(tnew)(tnew)--(xtx2)(xtx1)--(xtx2)(xs1)--(p2);
			
			\node[below of=snew, yshift=.3\nodedistance] (2a) {};
			\node[below of=snew, yshift=.3\nodedistance, xshift=.3\nodedistance] (2aa) {};
			\node[below of=anew, yshift=.3\nodedistance, xshift=-.3\nodedistance] (2ab) {};
			\node[below of=anew, yshift=.3\nodedistance, xshift=.3\nodedistance] (2bb) {};
			\node[below of=tnew, yshift=.3\nodedistance, xshift=-.3\nodedistance] (2bc) {};
			\node[below of=anew, yshift=.3\nodedistance] (2b) {};
			\node[below of=tnew, yshift=.3\nodedistance] (2c) {};
			\node[below of=2a, yshift=-.3\nodedistance, xshift=.5\nodedistance] (3a) {};
			\node[below of=2c, yshift=-.3\nodedistance, xshift=-.5\nodedistance] (3b) {};
			\begin{pgfonlayer}{background}
			\fill[black!20] \convexpath{snew,2aa, 2ab, anew, 2bb, 2bc, tnew, 2c, 3b, 3a, 2a, snew}{.3\nodedistance};
			\end{pgfonlayer}
			\end{tikzpicture}
		\end{minipage}
		\caption{Case~1 of the proof of Theorem~\ref{thm:conjecture_tw_3}: Partitioning the Graph $G$ into $G_1$ and $G_2$ on the left. Defining the graphs $H_1$ and $H_2$ on the right. Colored elements form $s$-$t$ disconnecting pair. Dotted lines indicate a connection between vertices.}
		\label{fig:cp_construction_tw3}
	\end{figure}
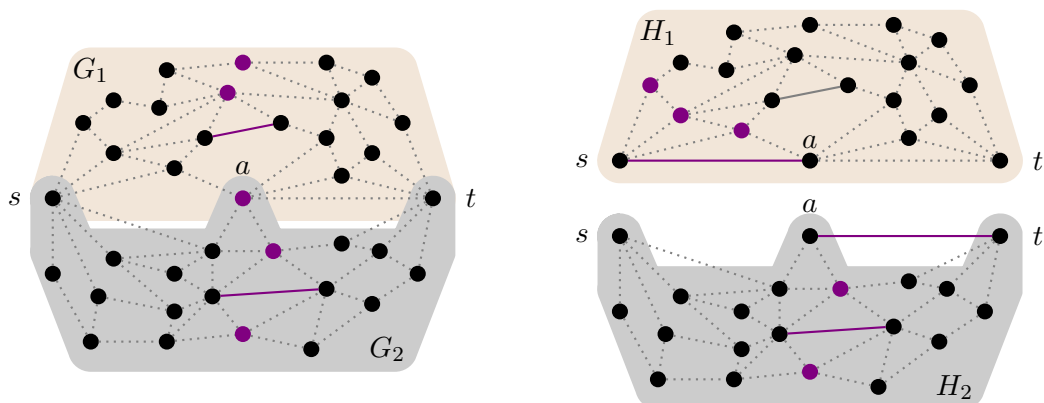
	
	Note that $\card{V(G_i)} <
	\card{V(G)}$ for $i\in\set{1,2}$. Further, as $B_x\cap B_y=\set{s,t,a}$, the two
	components of $\mathcal{T}-xy$ induce tree decompositions of $G_1$ and $G_2$ and
	we get $\tw(G_1)\leq 3$ and $\tw(G_2)\leq 3$. As the vertices~$s$, $t$ and $a$
	are contained in both $B_x$ and $B_y$, adding edges between these vertices in
	$G_1$ or $G_2$ does not increase the treewidth of the graphs. We distinguish two
	cases.

	\medskip
	
	\noindent\textbf{Case 1:} \textit{The vertex $a$ is contained in an $s$-$t$
		disconnecting pair of order $k$ and size $l$ in~$G$.}
	
	\smallskip
	
	In $G-a$ the pair $(k-1,l)$ is a connectivity pair for $s$ and $t$. Let $(W,F)$
	be an $s$-$t$ disconnecting pair of order $k$ and size $l$ with $a\in W$. For
	$i=1,2$, the pair $(W,F)$ induces an $s$-$t$ disconnecting pair $(W_i,F_i)$ in
	$G_i-a$ of order $k_i$ and size $l_i$ such that $k_1+k_2=k-1$ and $l_1+l_2=l$.
	Without loss of generality we may assume $l_1\geq 1$.
	
	\begin{subclaim}
		\label{sc:connectivity_pair}
		$(k_i,l_i)$ is a connectivity pair for $s$ and $t$ in $G_i-a$.
		\begin{proof}
			Suppose $(k_i,l_i)$ is not a connectivity pair. As $(W_i,F_i)$ is a 
			disconnecting pair of order $k_i$ and size $l_i$ such a pair 
			exists. Thus, there is a disconnecting pair $(W^\prime, 
			F^\prime)$ of order $k^\prime\leq k_i$, size $l^\prime\leq l_i$ and 
			cardinality $k^\prime+l^\prime<k_i+l_i$, but then $(W^\prime\cup 
			W_j\cup\{a\},F^\prime\cup F_j)$ with $j\in\{1,2\}\setminus \{i\}$ is an $s$-$t$ 
			disconnecting pair in $G$ of order at most $k$, size at most $l$ and 
			cardinality less than $k+l$ which yields a contradiction.
		\end{proof}
	\end{subclaim}
	Next we show that if $(k_2,l_2)$ is not a connectivity pair for $s$ and $t$ in
	$G_2$, the desired paths exist. If $(k_2,l_2)$ is in fact not a connectivity
	pair, then $(k_2, l_2 + p)$ is a connectivity pair for some integer~${p\geq 1}$.
	By induction we get $k_2+l_2+p$ edge-disjoint $s$-$t$ paths in $G_2$, $k_2+1$
	of which are internally disjoint. As $(k_1, l_1)$ is a connectivity pair
	in $G_1-a$ again by induction we get $k_1+l_1$ edge-disjoint $s$-$t$ paths in
	$G_1-a$ of which $k_1+1$ are internally disjoint (recall that $l_1\geq
	1$). Together we get $k_2 + l_2 + p + k_1 + l_1\geq k+l$ edge-disjoint $s$-$t$
	paths in $G$ of which $k_1+1+k_2+1=k+1$ are internally disjoint. Thus,
	we may assume that $(k_2,l_2)$ is a  connectivity pair for $s$ and $t$ in
	$G_2$.
	
	Now $a$ cannot be contained in any $s$-$t$ disconnecting pair in $G_2$ of order
	$k_2$ and size $l_2$, as otherwise $(k_2,l_2)$ would not be a connectivity pair
	for $s$ and $t$ in $G_2-a$. We fix some $s$-$t$ disconnecting pair
	$(W_2^\prime, F_2^\prime)$ in $G_2$ that has order $k_2$ and size~$l_2$. As in
	$G_2-W^\prime_2- F^\prime_2$, the vertex $a$ cannot be connected to both, $s$ and
	$t$, without loss of generality we may assume that $a$ is not connected to~$t$.
	Thus, for the remainder of Case~1 we assume
	\begin{align}
		\label{thm:conjecture_tw_3_sub1}
		\begin{minipage}{.9\textwidth}
			\textit{$(k_2,l_2)$ is a connectivity pair for $s$ and $t$ in 
				$G_2$, $(W_2^\prime, F_2^\prime)$ is an $s$-$t$ disconnecting pair 
				of order~$k_2$ and size~$l_2$ that does not contain $a$, and the 
				vertices $a$ and $t$ are not connected in 
				$G_2-W^\prime_2-F^\prime_2$.}
		\end{minipage}
	\end{align}
	
	We define $0\leq q\leq l_1$ to be the unique integer such that ${(k_1+1,
		l_1-q)}$ is a connectivity pair for $s$ and $t$ in $G_1$. Note that this is
	well-defined as $(W_1\cup\{a\},F_1)$ is an $s$-$t$ disconnecting pair of order
	$k_1+1$ and size $l_1$.
	
	Denote by $H_1$ the graph arising from $G_1$ by adding $q$ parallel edges
	$e_1,\dots,e_q$ between $a$ and $s$, \textit{cf.}~Figure~\ref{fig:cp_construction_tw3}. Then the following holds:
	\begin{subclaim}
		\label{sc:h1}
		$(k_1+1,l_1)$ is a connectivity pair for $s$ and $t$ in $H_1$.
		\begin{proof}
			As argued before, $(W_1\cup\{a\}, F_1)$ is an $s$-$t$ disconnecting pair of
			order $k_1+1$ and size $l_1$ in $G_1$ and thereby also disconnecting in $H_1$.
			Suppose that there exists an $s$-$t$ disconnecting pair of order $k_1+1$ and
			size at most $l_1-1$. Let $(W^{\prime}, F^{\prime})$ be one such pair of
			minimal size. Suppose that $e_i\in F^{\prime}$ for some $i\in\{1,\dots,q\}$. As
			the size of the disconnecting pair is minimal, this implies $e_i\in F^{\prime}$
			for all $i\in\{1,\dots,q\}$. If this is the case $(W^\prime,
			F^\prime\setminus\{e_1,\dots,e_q\})$ is an $s$-$t$ disconnecting pair in $G_1$
			of order at most $k_1+1$ and size at most $l_1-1-q$ in contradiction to
			$(k_1+1, l_1-q)$ being a connectivity pair in~$G_1$. Thus, either $a\in
			W^\prime$ or $a$ and $s$ are contained in the same component of
			$H_1-W^\prime-F^\prime$. In particular, there is no $a$-$t$ path in
			$G_1-W^\prime-F^\prime$. But then $(W^\prime\cup W_2^\prime, F^\prime\cup
			F_2^\prime)$ is an $s$-$t$ disconnecting pair in $G$ of order at most $k$ and
			size at most $l-1$ by \eqref{thm:conjecture_tw_3_sub1}. This contradicts
			$(k,l)$ being a connectivity pair for $s$ and $t$ in $G$.
		\end{proof}
	\end{subclaim} 
	Next, denote by $H_2$ the graph arising from $G_2$ by adding $q$ parallel edges
	$f_1,\dots, f_q$ between $a$ and $t$, \textit{cf.}~Figure~\ref{fig:cp_construction_tw3}. We prove the following:
	\begin{subclaim}
		\label{gt::sc:h2}
		$(k_2,l_2+q)$ is a connectivity pair for $s$ and $t$ in $H_2$.
		\begin{proof}
			If $q=0$ the statement holds by~\eqref{thm:conjecture_tw_3_sub1}, so
			assume that $q\geq 1$. The pair $(W_2^\prime,F_2^\prime\cup\{f_1,\dots,f_q\})$
			is an $s$-$t$ disconnecting pair in $H_2$ and $a$ and $t$ are not in the same
			component in $H_2-W_2'-F_2'\cup \{f_1,\ldots,f_q\}$. So suppose there exists an
			$s$-$t$ disconnecting pair of order $k_2$ and size $l_2+q-1$. Let  $(W^\prime,
			F^\prime)$ be one such pair of minimal size. With the same arguments as in the
			previous claim we get that $f_i\notin F^\prime$ for all $i\in\{1,\dots,q\}$.
			Thus, $a\in W^\prime$ or $a$ and $t$ are in the same component in
			$H_2-W^\prime-F^\prime$. In particular, there is no $s$-$a$ path in
			$G_2-W^\prime-F^\prime$. As $(k_1+1,l_1-q)$ is a connectivity pair for~$G_1$,
			there exists an $s$-$t$ disconnecting pair~$(W_1^\prime, F_1^\prime)$ in $G_1$
			of order $k_1+1$ and size~$l_1-q$. Suppose there exists an $s$-$a$ path in
			$G_1-W_1^\prime-F_1^\prime$. Then, there is no $a$-$t$ path and
			$(W_1^\prime\cup W_2^\prime, F_1^\prime\cup F_2^\prime)$ is an $s$-$t$
			disconnecting pair in~$G$ of order $k$ and size $l_2 + l_1 - q < l$
			by~\eqref{thm:conjecture_tw_3_sub1} \textemdash\xspace a contradiction. On the other hand if there
			does not exist an $s$-$a$ path in $G_1-W_1^\prime - F_1^\prime$, then the pair
			$(W_1^\prime\cup W^\prime, F_1^\prime\cup F^\prime)$ is $s$-$t$ disconnecting
			in $G$ and of order $k$ and size $l-1$, which again yields a contradiction.
		\end{proof}
	\end{subclaim}
	
	\begin{figure}[ht]
		\centering
		\begin{minipage}[t]{.485\textwidth}%
			\centering
			\begin{tikzpicture}[node distance=\nodedistance]
			\clip(-.75\nodedistance,2.5\nodedistance) rectangle(5.75\nodedistance, -2.75\nodedistance);
			\node[vertex, yshift=.5\nodedistance] (s) {};
			\labelLeft{s}{$s$}
			\node[vertex, right of=s, xshift=4\nodedistance] (t) {};
			\labelRight{t}{$t$}
			\node[vertex, right of=s, xshift=1.5\nodedistance] (a) {};
			\node[above of=a, yshift=-.75\nodedistance] {$a$};

			\node[above of=s, yshift=.7\nodedistance, xshift=.5\nodedistance] (1a) {};
			\node[right of=1a, xshift=.5\nodedistance] (1b) {};
			\node[above of=t, yshift=.7\nodedistance, xshift=-.5\nodedistance] (1c) {};
			\begin{pgfonlayer}{background}
			\fill[brown!20] \convexpath{s,1a, 1b, 1c, t, a}{.3\nodedistance};
			\end{pgfonlayer}
			
			\node[vertex, above of=s, xshift=.4\nodedistance] (sx1) {};
			\node[vertex, above of=s, yshift=-.4\nodedistance, xshift=.8\nodedistance] (sx2) {};
			\node[vertex, above of=t, xshift=-.4\nodedistance] (tx1) {};
			\node[vertex, above of=t, yshift=-.4\nodedistance, xshift=-.8\nodedistance] (tx2) {};
			\node[above of=sx1, xshift=.1\nodedistance, yshift=-.3\nodedistance] (y1) {$H_1$};
			\node[vertex, above of=sx1, xshift=.4\nodedistance, yshift=-.7\nodedistance] (ry1) {};
			\node[vertex, right of=y1] (y2) {};
			\node[vertex, right of=sx2, xshift=-.2\nodedistance, yshift=-.2\nodedistance] (y3) {};
			\node[vertex, above of=a, xshift=-.5\nodedistance, yshift=-.2\nodedistance] (z1) {};
			\node[vertex, above of=a, xshift=.5\nodedistance] (z2) {};
			\node[vertex, right of=sx1, yshift=.2\nodedistance] (y4) {};
			\node[vertex, right of=y2, yshift=.1\nodedistance] (s1) {};
			\node[vertex, right of=y2, yshift=-.3\nodedistance, xshift=-.2\nodedistance] (s2) {};
			\node[vertex, below of=tx2, xshift=-.4\nodedistance, yshift=.7\nodedistance] (q1) {};
			\node[vertex, above of=q1, xshift=-.2\nodedistance, yshift=-.5\nodedistance] (q2) {};
			\node[vertex, above of=q2, xshift=.6\nodedistance, yshift=-.2\nodedistance] (q3) {};
			\node[vertex, left of =q3, xshift=.4\nodedistance, yshift=.2\nodedistance] (q4) {};
			\node[vertex, below of=q4, xshift=.2\nodedistance, yshift=.5\nodedistance] (q5) {};
			
			\draw[edge, color2] (z1) -- (z2);
			\draw[edge, color2] (s)--(a);
			\draw[edge, color2, dotted] (a)--(t);
			\draw[edge, color2, dotted] (s)--(sx1)(sx1)--(ry1)(ry1)--(y4)(y4)--(y2)(y2) --(s1)(t) -- (tx1)(tx1) --(q3)(q3)--(q4)(s1)--(q4)  ;
			\draw[edge, color2, dotted] (s)--(sx2)(sx2)--(s2)(t) -- (tx2)(tx2)--(q5)(q5)--(s2);
			\draw[edge, color2, dotted] (s)--(y3)(y3) -- (z1)(t)--(q1)(q1)--(q2)(q2)--(z2) ;
			\draw[edge, dotted]   (sx1) -- (sx2) (sx2) -- (z1) (sx2) -- (y3)   (s2)--(y4)  (y2) -- (s2) (y3)--(a) (q1)--(a)   (s2)--(z2) (s2)--(z1) (q2)--(q5) (q5)--(q4) (q5)--(q3) (q5)--(s1)  (tx1) --(q5) (tx2)--(q1)  (a)--(q2);

			\node[vertex, below of=s] (snew) {};
			\labelLeft{snew}{$s$}
			\node[vertex, below of=a] (anew) {};
			\node[above of=anew, yshift=-.6\nodedistance] {$a$};
			\node[vertex, below of=t] (tnew) {};
			\labelRight{tnew}{$t$}
			\node[vertex, below of=snew, yshift=-.3\nodedistance, xshift=.6\nodedistance] (xsx1) {};
			\node[vertex, below of=snew, yshift=.2\nodedistance, xshift=.8\nodedistance] (xsx2) {};
			\node[vertex, below of=tnew, xshift=-.2\nodedistance] (xtx1) {};
			\node[vertex, below of=tnew, yshift=.3\nodedistance, xshift=-.7\nodedistance] (xtx2) {};
			\node[vertex,below of=snew, xshift=-0\nodedistance] (xstar) {};
			\node[vertex, below of=xsx1, xshift=-.1\nodedistance, yshift=.4\nodedistance] (xy1) {};
			\node[vertex, right of=xy1] (xy2) {};
			\node[vertex, right of=xsx2, xshift=-.2\nodedistance, yshift=-.2\nodedistance] (xy3) {};
			\node[vertex, below of=anew, yshift=.3\nodedistance, xshift=-.4\nodedistance] (xz1) {};
			\node[vertex, below of=anew, yshift=.3\nodedistance, xshift=.4\nodedistance] (xz2) {};
			\node[vertex, right of=xsx1, yshift=-.2\nodedistance] (xy4) {};
			\node[vertex, right of=xy2, yshift=.1\nodedistance] (xs1) {};
			\node[vertex, right of=xy2, yshift=.6\nodedistance, xshift=-.4\nodedistance] (xs2) {};
			\node[vertex, above of=xtx2, xshift=-.5\nodedistance, yshift=-.9\nodedistance] (p1) {};
			\node[vertex, below of=p1, xshift=-.2\nodedistance, yshift=.4\nodedistance] (p2) {};
			\node[below of=p2, xshift=.8\nodedistance, yshift=.2\nodedistance] (p3) {$H_2$};
			\node[vertex, left of =p3, xshift=.0\nodedistance, yshift=0\nodedistance] (p4) {};
			\node[vertex, above of=p4, xshift=.8\nodedistance, yshift=-.4\nodedistance] (p5) {};
			
			\draw[edge, color2] (xs2)--(p2);
			\draw[edge, color2] (anew) --(tnew);
			\draw[edge, dotted, color2](xstar)--(snew)(xstar)--(xy1)(xy1)--(xy2)(xy2)--(xs1)(xs1)--(p4)(p4)--(p5)(xtx1)--(tnew)(xtx2)--(p2)(tnew)--(xtx2)(xtx1)--(p5);
			\draw[edge, dashed, color2] (snew)--(xsx2)(xsx2)--(xz1)(xz1)--(xz2)(xz2)--(p1)(tnew)--(p1);
			\draw[edge, dotted, color2](snew)--(xz1)(xz1)--(anew);
			\draw[edge, dotted, color2](snew)--(xsx1)(xsx1)--(xy4)(xy4)--(xs2);
			\draw[edge, dotted] (xs2)--(xy3) (xsx1)--(xy1)(xz2)--(anew)(xy2)--(xy4)(xy2)--(xs2)(xs2)--(xz2)(xs2)--(xz1)(xy3)--(xz1)(xy3)--(xsx2)(xy4)--(xsx2)(xs1)--(xs2)(p2)--(p4)(p2)--(p5)(xtx2)--(p1)(xz2)--(p2)(xtx1)--(xtx2)(xs1)--(p2);
			
			\node[below of=snew, yshift=.3\nodedistance] (2a) {};
			\node[below of=snew, yshift=.3\nodedistance, xshift=.3\nodedistance] (2aa) {};
			\node[below of=anew, yshift=.3\nodedistance, xshift=-.3\nodedistance] (2ab) {};
			\node[below of=anew, yshift=.3\nodedistance, xshift=.3\nodedistance] (2bb) {};
			\node[below of=tnew, yshift=.3\nodedistance, xshift=-.3\nodedistance] (2bc) {};
			\node[below of=anew, yshift=.3\nodedistance] (2b) {};
			\node[below of=tnew, yshift=.3\nodedistance] (2c) {};
			\node[below of=2a, yshift=-.3\nodedistance, xshift=.5\nodedistance] (3a) {};
			\node[below of=2c, yshift=-.3\nodedistance, xshift=-.5\nodedistance] (3b) {};
			\begin{pgfonlayer}{background}
			\fill[black!20] \convexpath{snew,2aa, 2ab, anew, 2bb, 2bc, tnew, 2c, 3b, 3a, 2a, snew}{.3\nodedistance};
			\end{pgfonlayer}
			\end{tikzpicture}
		\end{minipage}
		\hspace{0.005\textwidth}
		\begin{minipage}[t]{.485\textwidth}%
			\centering
			\begin{tikzpicture}[node distance=\nodedistance]
			\clip(-.75\nodedistance,2.5\nodedistance) rectangle(5.75\nodedistance, -2.75\nodedistance);
			\node[vertex] (s) {};
			\labelLeft{s}{$s$}
			\node[vertex, right of=s, xshift=4\nodedistance] (t) {};
			\labelRight{t}{$t$}
			\node[vertex, right of=s, xshift=1.5\nodedistance] (a) {};
			\node[above of=a, yshift=-.6\nodedistance] {$a$};

			\node[above of=s, yshift=.7\nodedistance, xshift=.5\nodedistance] (1a) {};
			\node[right of=1a, xshift=.5\nodedistance] (1b) {};
			\node[above of=t, yshift=.7\nodedistance, xshift=-.5\nodedistance] (1c) {};
			\begin{pgfonlayer}{background}
			\fill[brown!20] \convexpath{s,1a, 1b, 1c, t, a}{.3\nodedistance};
			\end{pgfonlayer}
			
			\node[below of=s, yshift=.3\nodedistance] (2a) {};
			\node[below of=s, yshift=.3\nodedistance, xshift=.3\nodedistance] (2aa) {};
			\node[below of=a, yshift=.3\nodedistance, xshift=-.3\nodedistance] (2ab) {};
			\node[below of=a, yshift=.3\nodedistance, xshift=.3\nodedistance] (2bb) {};
			\node[below of=t, yshift=.3\nodedistance, xshift=-.3\nodedistance] (2bc) {};
			\node[below of=a, yshift=.3\nodedistance] (2b) {};
			\node[below of=t, yshift=.3\nodedistance] (2c) {};
			\node[below of=2a, yshift=-.3\nodedistance, xshift=.5\nodedistance] (3a) {};
			\node[below of=2c, yshift=-.3\nodedistance, xshift=-.5\nodedistance] (3b) {};
			\begin{pgfonlayer}{background}
			\fill[black!20] \convexpath{s,2aa, 2ab, a, 2bb, 2bc, t, 2c, 3b, 3a, 2a, s}{.3\nodedistance};
			\end{pgfonlayer}
			
			\node[vertex, above of=s, xshift=.4\nodedistance] (sx1) {};
			\node[vertex, above of=s, yshift=-.4\nodedistance, xshift=.8\nodedistance] (sx2) {};
			\node[vertex, above of=t, xshift=-.4\nodedistance] (tx1) {};
			\node[vertex, above of=t, yshift=-.4\nodedistance, xshift=-.8\nodedistance] (tx2) {};
			\node[above of=sx1, xshift=.1\nodedistance, yshift=-.3\nodedistance] (y1) {$G_1$};
			\node[vertex, above of=sx1, xshift=.4\nodedistance, yshift=-.7\nodedistance] (ry1) {};
			\node[vertex, right of=y1] (y2) {};
			\node[vertex, right of=sx2, xshift=-.2\nodedistance, yshift=-.2\nodedistance] (y3) {};
			\node[vertex, above of=a, xshift=-.5\nodedistance, yshift=-.2\nodedistance] (z1) {};
			\node[vertex, above of=a, xshift=.5\nodedistance] (z2) {};
			\node[vertex, right of=sx1, yshift=.2\nodedistance] (y4) {};
			\node[vertex, right of=y2, yshift=.1\nodedistance] (s1) {};
			\node[vertex, right of=y2, yshift=-.3\nodedistance, xshift=-.2\nodedistance] (s2) {};
			\node[vertex, below of=tx2, xshift=-.4\nodedistance, yshift=.7\nodedistance] (q1) {};
			\node[vertex, above of=q1, xshift=-.2\nodedistance, yshift=-.5\nodedistance] (q2) {};
			\node[vertex, above of=q2, xshift=.6\nodedistance, yshift=-.2\nodedistance] (q3) {};
			\node[vertex, left of =q3, xshift=.4\nodedistance, yshift=.2\nodedistance] (q4) {};
			\node[vertex, below of=q4, xshift=.2\nodedistance, yshift=.5\nodedistance] (q5) {};
			
			\draw[edge, color2] (z1) -- (z2);
			\draw[edge, color2, dotted] (a)--(t);
			\draw[edge, color2, dotted] (s)--(sx1)(sx1)--(ry1)(ry1)--(y4)(y4)--(y2)(y2) --(s1)(t) -- (tx1)(tx1) --(q3)(q3)--(q4)(s1)--(q4)  ;
			\draw[edge, color2, dotted] (s)--(sx2)(sx2)--(s2)(t) -- (tx2)(tx2)--(q5)(q5)--(s2);
			\draw[edge, color2, dotted] (s)--(y3)(y3) -- (z1)(t)--(q1)(q1)--(q2)(q2)--(z2) ;
			\draw[edge, dotted]   (sx1) -- (sx2) (sx2) -- (z1) (sx2) -- (y3)   (s2)--(y4)  (y2) -- (s2) (y3)--(a) (q1)--(a)   (s2)--(z2) (s2)--(z1) (q2)--(q5) (q5)--(q4) (q5)--(q3) (q5)--(s1)  (tx1) --(q5) (tx2)--(q1)  (a)--(q2);
			
			\node[vertex, below of=s, yshift=-.3\nodedistance, xshift=.6\nodedistance] (xsx1) {};
			\node[vertex, below of=s, yshift=.2\nodedistance, xshift=.8\nodedistance] (xsx2) {};
			\node[vertex, below of=t, xshift=-.2\nodedistance] (xtx1) {};
			\node[vertex, below of=t, yshift=.3\nodedistance, xshift=-.7\nodedistance] (xtx2) {};
			\node[vertex,below of=s, xshift=-0\nodedistance] (xstar) {};
			\node[vertex, below of=xsx1, xshift=-.1\nodedistance, yshift=.4\nodedistance] (xy1) {};
			\node[vertex, right of=xy1] (xy2) {};
			\node[vertex, right of=xsx2, xshift=-.2\nodedistance, yshift=-.2\nodedistance] (xy3) {};
			\node[vertex, below of=a, yshift=.3\nodedistance, xshift=-.4\nodedistance] (xz1) {};
			\node[vertex, below of=a, yshift=.3\nodedistance, xshift=.4\nodedistance] (xz2) {};
			\node[vertex, right of=xsx1, yshift=-.2\nodedistance] (xy4) {};
			\node[vertex, right of=xy2, yshift=.1\nodedistance] (xs1) {};
			\node[vertex, right of=xy2, yshift=.6\nodedistance, xshift=-.4\nodedistance] (xs2) {};
			\node[vertex, above of=xtx2, xshift=-.5\nodedistance, yshift=-.9\nodedistance] (p1) {};
			\node[vertex, below of=p1, xshift=-.2\nodedistance, yshift=.4\nodedistance] (p2) {};
			\node[below of=p2, xshift=.8\nodedistance, yshift=.2\nodedistance] (p3) {$G_2$};
			\node[vertex, left of =p3, xshift=.0\nodedistance, yshift=0\nodedistance] (p4) {};
			\node[vertex, above of=p4, xshift=.8\nodedistance, yshift=-.4\nodedistance] (p5) {};
			
			\draw[edge, color2] (xs2)--(p2);
			\draw[edge, dotted, color2](xstar)--(s)(xstar)--(xy1)(xy1)--(xy2)(xy2)--(xs1)(xs1)--(p4)(p4)--(p5)(xtx1)--(t)(xtx2)--(p2)(t)--(xtx2)(xtx1)--(p5);
			\draw[edge, dashed, color2] (s)--(xsx2)(xsx2)--(xz1)(xz1)--(xz2)(xz2)--(p1)(t)--(p1);
			\draw[edge, dotted, color2](s)--(xz1)(xz1)--(a);
			\draw[edge, dotted, color2](s)--(xsx1)(xsx1)--(xy4)(xy4)--(xs2);
			\draw[edge, dotted] (xs2)--(xy3) (xsx1)--(xy1)(xz2)--(a)(xy2)--(xy4)(xy2)--(xs2)(xs2)--(xz2)(xs2)--(xz1)(xy3)--(xz1)(xy3)--(xsx2)(xy4)--(xsx2)(xs1)--(xs2)(p2)--(p4)(p2)--(p5)(xtx2)--(p1)(xz2)--(p2)(xtx1)--(xtx2)(xs1)--(p2);
			\end{tikzpicture}
		\end{minipage}
		\caption{Case~1 of the proof of Theorem~\ref{thm:conjecture_tw_3}: $s$-$t$ paths in $H_1$ and $H_2$ given by induction on the left. Creation of desired $s$-$t$ paths in $G$ on the right. Colored dotted lines indicate internally disjoint $s$-$t$ paths. Dashed line indicates an $s$-$t$ path that is edge-disjoint to all other indicated paths. Solid lines are single edges.}
		\label{fig:cp_paths_tw3}
	\end{figure}
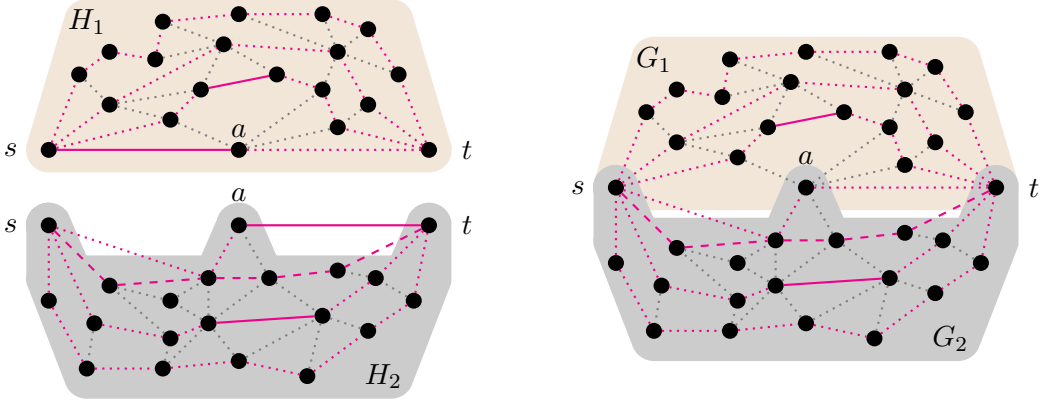

	Note that for $i\in\set{1,2}$ it is $V(H_i)=V(G_i)< V(G)$ and $\tw(H_i)\leq 3$ as
	$\tw(G_i)\leq 3$ and we only added edges between $s$ and $a$, respectively $a$
	and $t$ to get to $H_i$ from $G_i$. By Claim~\ref{sc:h1} and the induction
	hypothesis there are $k_1+1+l_1$ edge-disjoint $s$-$t$ paths in $H_1$, say
	$P_1,\dots, P_{k_1+l_1+1}$, of which $k_1+2$ are internally disjoint, \textit{cf.}~Figure~\ref{fig:cp_paths_tw3}.
	Without loss of generality let $P_1,\dots,P_{r_1}$ be the paths using edges from
	$\{e_1,\dots, e_q\}$, where from these we denote by $P_1$ the path that is among
	the $k_1+2$ internally disjoint paths, if one such path exists. If
	$q=l_2=0$, then $(k_2,0)$ is a connectivity pair for $s$ and $t$ in $G_2-a$ by
	Claim~\ref{sc:connectivity_pair}, and by
	Observation~\ref{obs:bineke_harary_l=1_or_k=0} there are $k_2$ internally
	disjoint $s$-$t$ paths in $G_2-a$. Together with
	$P_1,\dots,P_{k_1+l_1+1}$ we get the desired paths for $G$. So assume that
	$q+l_2>0$. By Claim~\ref{gt::sc:h2} and the induction hypothesis there are $k_2+l_2+q$
	edge-disjoint $s$-$t$ paths in~$H_2$, say $Q_1,\dots,Q_{k_2+l_2+q}$ of which
	$k_2+1$ are internally disjoint, \textit{cf.}~Figure~\ref{fig:cp_paths_tw3}. Without loss of generality let
	$Q_1,\dots,Q_{r_2}$ for $r_2\leq q$ be the paths using edges from
	$\{f_1,\dots,f_q\}$, where again from these we denote by $Q_1$ the path that is
	among the $k_2+1$ internally disjoint paths, if one such path exists. We
	now claim that for $r:=\min\{r_1,r_2\}$ the paths
	\begin{align*}
		Q_1a\cup aP_1,\dots, Q_ra\cup aP_r, P_{r_1+1},\dots, P_{k_1+l_1+1}, 
		Q_{r_2+1},\dots,Q_{k_2+l_2+q}
	\end{align*} are at least $k+l$ edge-disjoint $s$-$t$ paths of which 
	at least $k+1$ are internally disjoint, \textit{cf.}~Figure~\ref{fig:cp_paths_tw3}. First note, that the number of 
	paths is exactly \begin{align*}
		r+\left(k_1+l_1+1\right)-r_1 +\left(k_2+l_2+q\right) -r_2 = k+l+q+r 
		-r_1 - r_2.
	\end{align*}
	As $r$ is equal to $r_i$ for some $i$ and $q$ is greater than or equal to $r_1$
	and $r_2$ we get that the number of paths is at least $k+l$. To see that among
	the paths above there are at least $k+1$ internally disjoint paths,
	note that we started off with a set of $k_1+2+k_2+1=k+2$ internally
	disjoint paths
	$\mathcal{P}\subseteq\{P_1,\dots,P_{k_1+l_1+1},Q_1,\dots,Q_{k_2+l_2+q}\}$.
	
	The only vertex besides $s$ and $t$ that may be contained in more than one
	path of $\mathcal{P}$ is~$a$. If $Q_1, P_1\in\mathcal{P}$ they are glued
	together and $k+1$ internally disjoint paths still remain. If only one
	of $P_1$ and $Q_1$, say $P_1$, is among the internally disjoint paths,
	then $\mathcal{P}\setminus\{P_1\}$ is a set of $k+1$ internally disjoint
	paths, as only one other path than $P_1$ may contain $a$. Finally if neither
	$P_1$ nor $Q_1$ are among the internally disjoint paths, then
	$\mathcal{P}$ contains a subset of internally disjoint paths of size
	$k+1$ as at most two paths in $\mathcal{P}$ may contain~$a$. This concludes
	Case~1.
	
	\medskip
	
	\noindent\textbf{Case 2:} \textit{The vertex $a$ is not contained in any
		$s$-$t$ disconnecting pair of order $k$ and size~$l$.}
	
	\smallskip
	
	Denote by $(W,F)$ an $s$-$t$ disconnecting pair of order $k$ and size $l$ and
	for $i\in\{1,2\}$ let $W_i =V(G_i)\cap W $, $k_i=\card{W_i}$, $F_i=E(G_i)\cap
	E_i$, and $l_i=\card{F_i}$. Then $k_1+k_2=k$ and $l_1+l_2=l$. Without loss of
	generality we may assume that there is no $s$-$a$ path in $G-W-F$ and thereby
	also no $s$-$a$ path in $G_i-W_i-F_i$ for $i\in\{1,2\}$.
	
	For $i\in\{1,2\}$ denote by $0\leq q_i\leq l_i$ the unique integer
	such that $(k_i,l_i-q_i)$ is a connectivity pair for $s$ and $t$ in~$G_i$. Note
	that this is well-defined as $(W_i, F_i)$ is an $s$-$t$ disconnecting pair
	in~$G_i$. We define $q=\max\{q_1,q_2\}$ and assume that $q=q_1$ (the case $q=q_2$ follows analogously).
	Let $(W_1^\prime, F_1^\prime)$ be an $s$-$t$ disconnecting pair
	in $G_1$ of order $k_1$ and size $l_1-q$ and denote by $H_1$ the graph arising
	from $G_1$ by adding $q$ edges $e_1,\dots ,e_{q}$ between $a$ and $t$.
	\begin{subclaim}
		\label{sc:connectivity_pair_h1} $(k_1,l_1)$ is a connectivity pair for $s$
		and $t$ in $H_1$.
		\begin{proof}
			If $q=0$ the claim holds by definition of $q$. So assume $q\geq 1$.
			Clearly $(W_1^\prime, F_1^\prime\cup \{e_1,\dots, e_{q}\})$ is an $s$-$t$
			disconnecting pair in $H_1$ of order $k_1$ and size~$l_1$. So suppose there
			exists an $s$-$t$ disconnecting pair of order $k_1$ and size at most $l_1
			-1$. Let $(W^\prime, F^\prime)$ be such a pair of minimal size. If
			$e_1,\dots, e_q\in F^\prime$ the pair $(W^\prime,
			F^\prime\setminus\{e_1,\dots, e_q\})$ is $s$-$t$ disconnecting in $G_1$ and
			of order $k_1$ and size at most $l_1-q-1$, contradicting the fact that
			$(k_1,l_1-q)$ is a connectivity pair for $s$ and $t$ in $G_1$. Thus, either
			$a\in W^\prime$ or $a$ and $t$ are contained in the same component in
			$H_1-W^\prime - F^\prime$. In particular, there is no $s$-$a$ path in $G_1 -
			W^\prime - F^\prime$ and thereby the pair $(W^\prime\cup W_2, F^\prime\cup
			F_2)$ is $s$-$t$ disconnecting in $G$ and of order $k$ and size at most~$l-1$.
			A contradiction to $(k,l)$ being a connectivity pair for $s$ and $t$
			in $G$.
		\end{proof}
	\end{subclaim} 
	Let now $H_2$ be the graph arising from $G_2$ by adding $q$ edges $f_1,\dots,
	f_{q}$ between $a$ and $s$. For $H_2$ we can also find a connectivity pair.
	
	\begin{subclaim}
		\label{sc:connectivity_pair_h2}
		$(k_2, l_2 + q)$ is a connectivity pair for $H_2$.
		\begin{proof}
			Again, if $q=0$ the claim is immediate by definition of $q$. So let $q\geq 1$.
			Then $(W_2, F_2\cup\{f_1,\dots, f_{q}\})$ is an $s$-$t$ disconnecting pair of
			order $k_2$ and size $l_2+q$ in~$H_2$. So suppose there exists an $s$-$t$
			disconnecting pair of order $k_2$ and size at most $l_2+q-1$. Let $(W^\prime,
			F^\prime)$ be such a pair of minimal size. If $f_1,\dots, f_{q}\in F^\prime$,
			then there is no $s$-$a$ path in $H_2-W^\prime-F^\prime$. This implies that
			$(W^\prime\cup W_1, F^\prime\setminus \{f_1,\dots,f_{q^\prime}\}\cup F_1)$ is
			a disconnecting pair in $G$ of order at most $k$ and size at most $l-1$
			yielding a contradiction. Thus, either $a\in W^\prime$ or $a$ and $s$ are
			contained in the same component in $H_2-W^\prime -F^\prime$. In particular,
			there is no $a$-$t$ path in $G_2-W^\prime - F^\prime$. If there is also no
			$a$-$t$ path in $G_1-W_1^\prime-F_1^\prime$, the pair $(W^\prime\cup
			W_1^\prime, F^\prime\cup F_1^\prime)$ is disconnecting in $G$ and of order at
			most $k$ and size at most $l-1$. This yields a contradiction to $(k,l)$ being
			a connectivity pair for $s$ and $t$ in $G$. So suppose that there is an
			$a$-$t$ path in~$G_1-W_1^\prime -F_1^\prime$. Then there is no $s$-$a$ path
			in~$G_1-W_1^\prime-F_1^\prime$ and $(W_1^\prime\cup W_2, F_1^\prime\cup F_2)$ is
			an $s$-$t$ disconnecting pair in $G$, that has order at most $k$ and size at
			most $l_1-q+l_2<l$ as $q\geq 1$. Again this contradicts the fact that $(k,l)$
			is a connectivity pair for $s$ and $t$ in $G$.
		\end{proof}
	\end{subclaim}
	As in the proof of Case~1 we use the induction hypothesis on $H_1$ and $H_2$ to
	get the desired paths. If neither $l_1=0$ nor $l_2+q=0$ we get the paths in $G$
	in the same manner as in Case~1 and therefore do not repeat the arguments here.
	
	For the other case, let $l_1^\prime=l_1$ and $l_2^\prime=l_2+q$. If we can show
	for $i\in\{1,2\}$, that if $l_i^\prime=0$, then in $G_i-a$ the pair $(k_i,0)$
	is a connectivity pair, we can again proceed as in Case~1 and get the desired
	paths. To see this we simply observe that $a$ is not contained in any
	$k_i$-vertex separator in $G_i$ as this would imply that $a$ is contained in an
	$s$-$t$ disconnecting pair of order $k$ and size $l$ in~$G$.
\end{proof}

\section{Complexity of Computing the Second Coordinate in a Connectivity Pair}
\label{sec:complexity}

In \cite{beineke2012topics} Oellermann posed the question how
difficult it is to compute the second coordinate in a connectivity pair.
In this section we deal
with this question and show that there is no polynomial time algorithm for this
problem unless $\textsf{P}=\textsf{NP}$. We formulate the question as the following decision
problem:

\begin{samepage}
\begin{decisionproblem}
	\problemtitle{\secondCoordP (\scp)}
	\probleminput{A graph $G$, distinct vertices $s,t\in V(G)$ and integers $k,B\in \mathbb{N}$. We denote such an instance by $(G,\{s,t\},k,B)$.}
	\problemquestion{Is there an $l\leq B$ such that $(k,l)$ is a connectivity pair for $s$ and~$t$ in~$G$?}
\end{decisionproblem}
\end{samepage}

Recall that for a graph $G$ with distinct, non-adjacent vertices $s$ and $t$ the value $\kappa_{G}(s,t)$ is the minimum number of vertices separating~$s$ and~$t$.
It can be readily observed, that for a fixed $k$ the second coordinate in a 
connectivity pair is unique in the following sense:
\begin{observation}
	\label{obs:second_coordinate_unique}
	Let $G$ be a graph with distinct
	vertices $s,t\in V(G)$ and let, $k,l\in\mathbb{N}$ with $0\leq k\leq
	\kappa_{G-E(s,t)}(s,t)$. There exists a unique integer $l_k$ such that
	$(k,l_k)$ is a connectivity pair for $s$ and $t$ in $G$. In particular, if
	there is an $s$-$t$ disconnecting pair of order $k$ and size $l$, then
	there exists a unique integer $l_k\leq l$ such that $(k,l_k)$ is a connectivity
	pair.
\end{observation}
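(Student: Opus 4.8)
The plan is to reduce the whole statement to the monotonicity of a single auxiliary function. For $0\le j\le \card{V(G)}-2$ let $\psi(j)$ denote the minimum size of an $s$-$t$ disconnecting pair of order exactly $j$; this is finite since removing all edges incident to $s$ gives a disconnecting pair of order $0$, and one may then pad $W$ with arbitrary further vertices to reach any order $j$. Since adding an edge to $F$ preserves the disconnecting property, a disconnecting pair of order $j$ and size $l$ exists if and only if $l\ge\psi(j)$. By the reformulation~\ref{prop2_connectivity_pair}$'$ recorded after the definition of connectivity pairs, $(k,l)$ is a connectivity pair precisely when there is a disconnecting pair of order $k$ and size $l$, but none of order $k$ and size $l-1$ and none of order $k-1$ and size $l$. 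Translated into $\psi$, the first two conditions say exactly $l=\psi(k)$ (the first forces $l\ge\psi(k)$, the second forces $l-1<\psi(k)$), and the third says $\psi(k-1)>\psi(k)$ (vacuous for $k=0$). In particular uniqueness of $l_k$ is already settled: any connectivity pair with first coordinate $k$ must have second coordinate $\psi(k)$.

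Everything therefore hinges on the claim that $\psi$ is \emph{strictly} decreasing on $\{0,1,\dots,\kappa\}$, where $\kappa:=\kappa_{G-E(s,t)}(s,t)$, and this is the step I expect to be the main obstacle. To prove it I would fix $k$ with $0\le k<\kappa$ and take a disconnecting pair $(W,F)$ of order $k$ and minimum size $\psi(k)$. Recall that every $s$-$t$ disconnecting pair must contain all of $E(s,t)$, so $E(s,t)\subseteq F$. I claim some edge $e\in F\setminus E(s,t)$ is not incident to any vertex of $W$. Indeed, if every such edge were incident to $W$, then deleting $W$ already removes all of $F\setminus E(s,t)$, so $(W,E(s,t))$ would still be an $s$-$t$ disconnecting pair; but this makes $W$ an $s$-$t$ vertex-separator of size $k$ in $G-E(s,t)$, contradicting $k<\kappa$.

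Given such an edge $e$, it has an endpoint $v\notin\{s,t\}$ (as $e\notin E(s,t)$), and $v\notin W$ because $e$ avoids $W$. Replacing the edge by this vertex, i.e.\ passing to $(W\cup\{v\},F\setminus\{e\})$, yields an $s$-$t$ disconnecting pair of order $k+1$ and size $\psi(k)-1$, since deleting $v$ destroys $e$. Hence $\psi(k+1)\le\psi(k)-1<\psi(k)$, which gives the desired strict decrease across the whole range $0\le k<\kappa$.

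It then remains only to assemble the pieces. For every $k$ with $0\le k\le\kappa$ strict decrease yields $\psi(k-1)>\psi(k)$ (the case $k=0$ needing no such inequality), so $(k,\psi(k))$ satisfies both parts of~\ref{prop2_connectivity_pair}$'$ and is a connectivity pair; combined with the uniqueness observed above, this is the unique connectivity pair with first coordinate $k$, and $l_k=\psi(k)$. Finally, any $s$-$t$ disconnecting pair of order $k$ and size $l$ forces $l\ge\psi(k)=l_k$, which establishes the concluding clause with $l_k\le l$.
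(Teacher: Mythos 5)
Your proposal is correct and complete. Bear in mind, though, that the paper never actually proves this statement: it is recorded as an observation whose verification is left to the reader (``It can be readily observed\dots''), so there is no authors' argument to compare yours against --- your write-up supplies the missing details. Your packaging via the function $\psi(j)$ (the minimum size of an $s$-$t$ disconnecting pair of order exactly $j$) is a clean way to organize that verification: the reformulation \ref{prop2_connectivity_pair}$'$ translates into ``$(k,l)$ is a connectivity pair iff $l=\psi(k)$ and $\psi(k-1)>\psi(k)$'', which gives uniqueness for free, and existence on the stated range $0\le k\le\kappa_{G-E(s,t)}(s,t)$ reduces to strict monotonicity of $\psi$ there. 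Your proof of the strict decrease is sound: in a minimum-size pair $(W,F)$ of order $k<\kappa_{G-E(s,t)}(s,t)$ there must be an edge $e\in F\setminus E(s,t)$ avoiding $W$ (otherwise $(W,E(s,t))$ would be disconnecting, making $W$ a too-small vertex-separator of $G-E(s,t)$), and trading $e$ for an endvertex outside $\{s,t\}\cup W$ yields $\psi(k+1)\le\psi(k)-1$. This edge-to-vertex exchange is precisely the device the paper itself uses elsewhere (in justifying \ref{prop2_connectivity_pair}$'$ and \ref{prop2_connectivity_pair}$''$, in the proof of Theorem~\ref{thm:mengers_theorem}\ref{thm:menger_variant_adjacent_vertices}, and in the proof of Theorem~\ref{the:main_result}), so your argument is surely the intended one. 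Two harmless pedantic points: your equivalence ``a pair of order $j$ and size $l$ exists iff $l\ge\psi(j)$'' tacitly requires $l\le\card{E(G)}$, but every size you invoke is bounded by the size of an actual disconnecting pair, so this never bites; and padding $W$ to order exactly $j$ requires $j\le\card{V(G)}-2$, which holds throughout your argument since any vertex-separator of $G-E(s,t)$ is contained in $V(G)\setminus\{s,t\}$.
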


To show \textsf{NP}-completeness of \scp we use \textsc{Bipartite Partial Vertex
	Cover} as a reduction partner. Let $G$ be a graph and $q$ a non-negative
integer. A set $C\subseteq V(G)$ is a \emph{vertex cover} if every edge of $G$
is incident with a vertex in $C$. Further, we call $C'\subseteq V(G)$ a
\emph{partial vertex cover} with respect to $q$ if at least $q$ edges in $E(G)$
are incident to a vertex in $C'$.

\begin{samepage}
\begin{decisionproblem}
	\problemtitle{\textsc{Bipartite Partial Vertex Cover} (\parvc)}
	\probleminput{A bipartite graph $G$ and integers $q,B\in \mathbb{N}$.
		We denote such an instance by $(G,q,B)$.}
	\problemquestion{Is there a $C\subseteq V(G)$ with $|C|\leq B$ such that $C$ is a partial vertex cover of $G$ with respect to $q$?}
\end{decisionproblem}
\end{samepage}

In \cite{caskurlu} Caskurlu and Subramani showed that \parvc is \textsf{NP}-hard.
Now we can prove the following result:

\begin{theorem}\label{thm:CSC_np}
	\secondCoordP is \textsf{NP}-complete.
\end{theorem}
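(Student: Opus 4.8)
The plan is to prove that \scp lies in \textsf{NP} and is \textsf{NP}-hard; hardness will follow by a polynomial reduction from \parvc, which is \textsf{NP}-hard by~\cite{caskurlu}.

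First I would argue membership in \textsf{NP}. The subtle point is that being a \emph{connectivity} pair is a minimality (seemingly co-\textsf{NP}) condition, so a single disconnecting pair does not obviously certify a yes-instance. I resolve this with Observation~\ref{obs:second_coordinate_unique}: for $k$ in the range $0\le k\le\kappa_{G-E(s,t)}(s,t)$ a connectivity pair $(k,l_k)$ exists and is unique, and moreover any $s$-$t$ disconnecting pair of order $k$ and size $l$ witnesses $l_k\le l$. Hence a yes-instance can be certified by two polynomial-size, polynomial-time-checkable objects: (a) a family of $k$ internally disjoint $s$-$t$ paths in $G-E(s,t)$, which by Menger's Theorem~\ref{thm:menger_vertex} proves $k\le\kappa_{G-E(s,t)}(s,t)$, i.e.\ that $k$ lies in the valid range; and (b) an $s$-$t$ disconnecting pair of order $k$ and size at most $B$, which by the Observation proves $l_k\le B$. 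Conversely a yes-instance supplies both (using Property~\ref{prop2_connectivity_pair} for part (b)), so \scp is in \textsf{NP}.

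For hardness I would reduce from \parvc, the guiding idea being that choosing which $k$ vertices to delete from $G$ corresponds to choosing a partial vertex cover, with the \emph{uncovered} edges governing the edge-connectivity that survives. To make this precise I track the quantity $f(j):=\min\{\,\card{F}:(W,F)\text{ is an }s\text{-}t\text{ disconnecting pair with }\card{W}\le j\,\}$. A short argument from the definition of a connectivity pair shows that $(k,l)$ is a connectivity pair precisely when $l=f(k)$ and $f(k-1)>f(k)$; combined with Observation~\ref{obs:second_coordinate_unique} this gives that, whenever $k$ lies in the valid range, $l_k=f(k)$, so the \scp-answer for $(G,\{s,t\},k,B)$ is \emph{yes} iff $k\le\kappa_{G-E(s,t)}(s,t)$ and $f(k)\le B$. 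Given a \parvc-instance $(H,q,B_0)$ with bipartition $V(H)=A\cup B$, I build $G$ as follows: introduce $s$, $t$ and a vertex $x_w$ for every $w\in V(H)$; for every edge $e=uv\in E(H)$ with $u\in A$, $v\in B$ add the length-three path $s\,x_u\,x_v\,t$ using fresh edges (so $sx_u$ and $x_vt$ occur with multiplicities $\deg_H(u)$ and $\deg_H(v)$); and finally add $B_0$ internally disjoint dummy paths $s\,y_i\,t$ through fresh vertices $y_i$. I set $k:=B_0$ and $B:=\card{E(H)}+B_0-q$. The graph has $O(\card{V(H)})$ vertices and $O(\card{E(H)}+B_0)$ edges, so the reduction is polynomial, and the $B_0$ dummy paths force $\kappa_{G-E(s,t)}(s,t)\ge B_0=k$, keeping $k$ always in the valid range.

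The heart of the argument is to evaluate $f(B_0)$ via the edge form of Menger's Theorem~\ref{thm:menger_edge}, i.e.\ by max-flow/min-cut. Since $s$ and $t$ are the only vertices one may not delete, and every $s$-$t$ path must cross a middle edge $x_ux_v$ or a dummy edge, deleting a vertex set $W$ leaves the minimum number of $s$-$t$ separating edges equal to the number of surviving middle edges plus the number of surviving dummy paths. Writing $C=\{w:x_w\in W\}$, the surviving middle edges are exactly the $H$-edges not covered by $C$, so for $\card{W}\le B_0$ this count is $(\card{E(H)}-c)+(B_0-d)$, where $c$ is the number of edges covered by $C$ and $d$ the number of deleted dummy vertices. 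Because an additional covered edge is available as long as uncovered edges remain, deleting a dummy vertex is never better than covering, giving $f(B_0)=\card{E(H)}+B_0-\max_{\card{C}\le B_0}c(C)$. With $B=\card{E(H)}+B_0-q$ this yields $f(B_0)\le B$ exactly when some $C$ with $\card{C}\le B_0$ covers at least $q$ edges, so the \scp-answer equals the \parvc-answer, completing the reduction. I expect this min-cut computation to be the main obstacle: one must verify that an optimal deletion set uses only the vertices $x_w$ and never the dummies $y_i$, and that zig-zagging $s$-$t$ paths through the shared vertices $x_w$ create no extra flow, so that the surviving edge-connectivity is governed exactly by the partial-vertex-cover optimum. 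The dummy paths, indispensable for placing $k$ in the valid range, are precisely what make this accounting delicate.
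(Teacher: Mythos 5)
Your membership argument is sound, and is in fact slightly more careful than the paper's own, since you explicitly certify $k\le\kappa_{G-E(s,t)}(s,t)$ (a point the paper's appeal to Observation~\ref{obs:second_coordinate_unique} glosses over); your hardness reduction also uses the same partner, \parvc, with a similar layered gadget. The genuine gap is in the accounting step that you yourself flagged as the main obstacle. The claim that ``deleting a dummy vertex is never better than covering'', and hence the formula $f(B_0)=\card{E(H)}+B_0-\max_{\card{C}\le B_0}c(C)$, is false whenever $H$ admits a vertex cover of size smaller than $B_0$. Writing $\tau(H)$ for the minimum vertex cover size of $H$: once every edge of $H$ is covered, a further budget unit spent on a vertex $x_w$ lowers the surviving cut by $0$, whereas deleting a dummy $y_i$ lowers it by $1$, so optimal deletion sets do use dummies, and in fact $f(B_0)=\tau(H)$, which for $\tau(H)<B_0$ is strictly below the value $B_0$ given by your formula.

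This breaks the reduction itself, not merely its analysis. Take $H$ to be a single edge $uv$ with $q=2$ and $B_0=2$; this is a no-instance of \parvc because $q>\card{E(H)}$. Your construction yields $G$ consisting of the paths $sx_ux_vt$, $sy_1t$, $sy_2t$, with parameters $k=2$ and $B=\card{E(H)}+B_0-q=1$. But $(\{x_u,y_1\},\{sy_2\})$ is an $s$-$t$ disconnecting pair of order $2$ and size $1$, and no disconnecting pair of cardinality at most $2$ with order at most $2$ and size at most $1$ exists (cutting the two dummy paths and the path through $x_u,x_v$ requires three elements), so $(2,1)$ is a connectivity pair and the constructed \scp instance is a yes-instance. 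The repair is exactly the preprocessing step the paper performs and you omitted: compute a minimum vertex cover of the bipartite graph $H$ in polynomial time via K\"onig's Theorem and max-flow, and assume without loss of generality that $\tau(H)>B_0$ (instances with $\tau(H)\le B_0$, and those with $q>\card{E(H)}$, are decided trivially and mapped to fixed yes/no instances). Under that assumption an uncovered edge remains available throughout the entire budget, your exchange argument becomes valid, and the equivalence goes through; note that the same assumption already forces $\kappa_{G-E(s,t)}(s,t)>B_0$, which is how the paper dispenses with dummy paths altogether.
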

\begin{proof}
	We begin by proving that \secondCoordP is \textsf{NP}-hard.
	Let $(H,q,B)$ be an instance of \parvc.
	In particular, $H=(U\dot{\cup} V,E(H))$ is a bipartite graph whose partition has the parts $U$ and $V$.
	With the help of König's Theorem, \textit{cf.}~\cite{schrijver-book}, and a maximum flow algorithm (for example the Hopcroft-Karp algorithm, \textit{cf.}~\cite{Hopcroft1973}) we can compute a minimum size vertex cover on bipartite graphs in polynomial time.
	Therefore,	we may assume that any vertex cover of $G$ has cardinality of at least $B+1$.
	
	We construct an instance $(G,\{s,t\},k,l)$ of \scp as follows:
	Set $V(G):= V(H) \cup \{s,t\}$ and let $E(G)$ consists of the set $E(H)$
	and for every $u\in U$ add $|E(H)|$ parallel edges between $s$ and $u$ and for every
	$v\in V$ add $|E(H)|$ parallel edges between $t$ and $v$.
	Moreover set $k:= B$ and $l:=|E(G)|-q$.
	The construction of $G$ is illustrated in Figure~\ref{tikz::beinecke_harary:complexity_cpp}.
	All the steps can be realized in polynomial time.
	Note that $s$ and $t$ are at least $(k+1)$-vertex-connected since a vertex
	cover in $G$ has cardinality at least $B+1>k$.

	\begin{figure}[ht]
		
		\centering
		\begin{tikzpicture}
		\draw
		(0,0) node[rectangle,rounded corners=15,draw,minimum width=145pt,minimum height=110pt,fill=black!10] (gg) {}
		;
		\draw[inner sep=1pt, minimum size = 20pt]
		(-2,1) node[vertex] (u1) {}
		(-2,0.1) node[circle] (u) {\vdots}
		(-2,-1) node[vertex] (u2) {}
		(2,1) node[vertex] (v1) {}
		(2,0.1) node[circle] (v) {\vdots}
		(2,-1) node[vertex] (v2) {}
		(-4,0) node[vertex] (s) {}
		(4,0) node[vertex] (t) {}
		(0,-1.2) node (g) {\large$H$}
		;
		\labelLeft{s}{$s$}
		\labelRight{t}{$t$}
		\labelAbove{u1}{$u_1$}
		\labelAbove{v1}{$v_1$}
		\labelBelow{v2}{$v_{|V|}$}
		\labelBelow{u2}{$v_{|U|}$}
		\draw[dashed]
		(u1) edge (v1)
		(u1) edge (v)
		(u1) edge (v2)
		(u) edge (v1)
		(u) edge (v)
		(u) edge (v2)
		(u2) edge (v1)
		(u2) edge (v)
		;
		\draw
		(-3,.61) node[rotate=26.6] (p) {$\vdots$}
		(-3,-.39) node[rotate=-26.6] (p) {$\vdots$}
		(3,.61) node[rotate=-26.6] (p) {$\vdots$}
		(3,-.39) node[rotate=26.6] (p) {$\vdots$}
		;
		\draw
		(s) edge[bend left=15] (u1)
		(s) edge[bend right=15] (u1)
		(s) edge[bend left=15] (u2)
		(s) edge[bend right=15] (u2)
		(t) edge[bend left=15] (v1)
		(t) edge[bend right=15] (v1)
		(t) edge[bend left=15] (v2)
		(t) edge[bend right=15] (v2)
		;
		\end{tikzpicture}
		\caption{The constructed graph $G$ in the proof of Theorem~\ref{thm:CSC_np}.}
		\label{tikz::beinecke_harary:complexity_cpp}
	\end{figure}
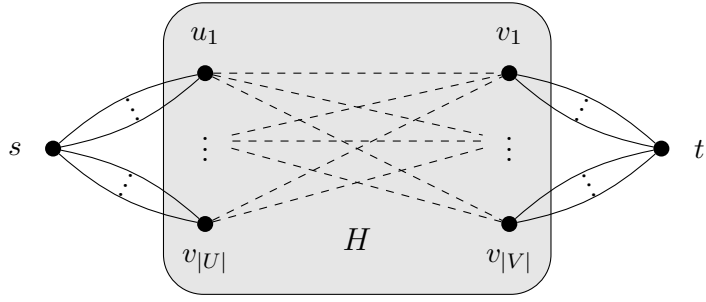
	
	We show that there is a partial vertex cover $C$ of $H$ with respect to $q$ and $|C|\leq B$ if and only if there is a connectivity pair $(k,l')$ for~$s$ and~$t$ in $G$ with~$l'\leq l$.
	
	First assume that there is a partial vertex cover $C$ of $H$ with respect to $q$ such that $|C|\leq B$.
	The pair $(C,F)$, where $F=\{uv\in E(H)\colon u,v\notin C \}$ is an $s$-$t$ disconnecting pair in $G$ with $|C|\leq B=k$ and $|F|\leq |E(H)|-q=l$.
	By Observation~\ref{obs:second_coordinate_unique} there is an $l_k\leq l$ such that $(k,l_k)$ is a connectivity pair for $s$ and $t$ in $G$.
	
	Conversely, assume that $(k,l')$ is a connectivity pair for $s$ and $t$ in $G$ for some~$l'\leq l$.
	Hence, there is an $s$-$t$ disconnecting pair $(W,F)$ of order $k$ and size $l'$ but non of order~$k$ and size less than $l'$.
	Therefore, we conclude that $F\subseteq E(H)$, since $l'\leq l\leq |E(H)|$ and an edge is in $F$ if and only if all its parallels are in $F$.
	Thus, we have for every edge $uv\in E(H)\setminus F$ that $u\in W$ or $v\in W$, otherwise $suvt$ is an $s$-$t$ path in $G-W-F$.
	Hence, the vertices in $W$ are incident to at least $|E(H)|-|F|\geq |E(H)|-l= q$ edges in~$H$.
	This shows that $W$ is a partial vertex cover of $H$ with respect to $q$ and $|W|=k=B$.
	
	Thus, \secondCoordP is \textsf{NP}-hard. To show that \scp in \textsf{NP}, let
$(G,\{s,t\},k,B)$ be an instance of \scp. If there exists an integer $l\leq B$
such that $(k,l)$ is a connectivity pair, there also exists an $s$-$t$
disconnecting pair of order $k$ and size at most $B$. Given a pair
$(W,F)$, where $W$ is a set of $k$ vertices and $F$ is a set of at most
$B$ edges, we may verify in polynomial time if $(W,F)$ is disconnecting. By
Observation~\ref{obs:second_coordinate_unique}, this implies that there exists
an integer $l\leq B$ such that $(k,l)$ is a connectivity pair.
	
	The arguments above may seem cumbersome, however, note that given a
disconnecting pair of order $k$ and size $l$, we may not verify in polynomial
time whether $(k,l)$ is a connectivity pair, unless $\textsf{P}=\textsf{NP}$:
Doing this would imply being able to decide in polynomial time if there is a
disconnecting pair of order $k-1$ and size $l$ as well as if there is a
disconnecting pair of order $k$ and size $l-1$. This, in turn, contradicts the
\textsf{NP}-hardness of \scp, unless $\textsf{P}=\textsf{NP}$.
\end{proof}

As a direct consequence of Theorem~\ref{thm:CSC_np} we get the intractability of the computation of the second coordinate of a
connectivity pair: An algorithm computing the second
coordinate in polynomial time could clearly be used to decide \scp in polynomial
time.

\begin{corollary}
	Unless \textsf{P}$=$\textsf{NP}, there is no polynomial time algorithm 
	that, given any graph $G$, distinct vertices $s$ and $t$ and an integer 
	$0\leq k\leq \kappa_{G-E(s,t)}(s,t)$, returns the integer $l$ such that 
	$(k,l)$ is a connectivity pair for $s$ and $t$ in $G$.
\end{corollary}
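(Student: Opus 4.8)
The plan is to prove the corollary by a straightforward Turing reduction from \scp, invoking the \textsf{NP}-completeness established in Theorem~\ref{thm:CSC_np}. Suppose, for the sake of contradiction, that there were a polynomial time algorithm $\mathcal{A}$ that, on input $(G,s,t,k)$ with $0\le k\le\kappa_{G-E(s,t)}(s,t)$, returns the unique integer $l_k$ for which $(k,l_k)$ is a connectivity pair; its uniqueness in this range is guaranteed by Observation~\ref{obs:second_coordinate_unique}. I would then build from $\mathcal{A}$ a polynomial time decision procedure for \scp, thereby forcing $\textsf{P}=\textsf{NP}$.

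Given an instance $(G,\{s,t\},k,B)$ of \scp, the first step is to determine whether $k$ lies in the range on which $\mathcal{A}$ is promised to be correct. Since $s$ and $t$ are non-adjacent in $G-E(s,t)$, the value $\kappa_{G-E(s,t)}(s,t)$ can be computed in polynomial time via a maximum-flow computation together with Menger's Theorem~\ref{thm:mengers_theorem}. If $k>\kappa_{G-E(s,t)}(s,t)$, then no connectivity pair of the form $(k,l)$ exists at all: a minimum vertex separator has size $\kappa_{G-E(s,t)}(s,t)<k$ and can be padded with arbitrary further vertices and edges into disconnecting pairs of order $k-1$ and of every prescribed size, so no pair $(k,l)$ can meet the defining requirements of a connectivity pair, and I would immediately answer \emph{no}. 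Otherwise $k$ is in range, so I would run $\mathcal{A}$ to obtain $l_k$ and answer \emph{yes} if and only if $l_k\le B$. Correctness here follows from Observation~\ref{obs:second_coordinate_unique}: since $l_k$ is the \emph{unique} second coordinate for this $k$, some $l\le B$ with $(k,l)$ a connectivity pair exists precisely when $l_k\le B$.

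Because both the flow computation and the single call to $\mathcal{A}$ run in polynomial time, this procedure decides \scp in polynomial time, contradicting Theorem~\ref{thm:CSC_np} unless $\textsf{P}=\textsf{NP}$. The argument is essentially routine, and indeed the paragraph preceding the corollary already sketches it; the only point that genuinely requires care is the range check. Specifically, one must confirm that $\mathcal{A}$'s promise, which covers only $k\le\kappa_{G-E(s,t)}(s,t)$, leaves no \scp instance unhandled, and one must verify that out-of-range values of $k$ admit no connectivity pair so that the \emph{no} answer is justified there. Everything else is an immediate consequence of the uniqueness of the second coordinate.
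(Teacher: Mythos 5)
Your proposal is correct and follows essentially the same route as the paper, whose proof is the one-line observation that a polynomial-time algorithm computing the second coordinate would decide \scp in polynomial time, contradicting Theorem~\ref{thm:CSC_np}. The only difference is that you additionally verify the promise $k\leq\kappa_{G-E(s,t)}(s,t)$ (computable by max-flow) and justify answering \emph{no} for out-of-range $k$ --- a detail the paper glosses over, and which is sound since any disconnecting pair must contain $E(s,t)$, so a minimum vertex separator of $G-E(s,t)$ together with $E(s,t)$ violates the minimality condition in the definition of a connectivity pair whenever $k$ exceeds $\kappa_{G-E(s,t)}(s,t)$.
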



\section{Conclusion and Open Problems}
\label{sec:conclusion}

In this article we considered a form of mixed connectivity in graphs introduced
by Beineke and Harary, namely connectivity pairs. We prove the Beineke Harary
Conjecture for the case that $l=2$. This result substantially differs from
previous results in the literature and can be used to prove the conjecture on
restricted graph classes. We illustrate the latter fact by proving the
conjecture for graphs of treewidth at most $3$. From our studies the
Beineke-Harary-Conjecture may hold:
\begin{conjecture*}[Beineke-Harary-Conjecture]
	Let $G$ be a graph, $s,t\in V(G)$ distinct vertices and $k,l$ non-negative
integers with $l\geq 1$. If $(k,l)$ is a connectivity pair for $s$ and $t$ in
$G$, then there exist $k+l$ edge-disjoint paths, of which $k+1$ are internally
disjoint.
\end{conjecture*}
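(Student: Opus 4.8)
The plan is to prove the conjecture by the same skein-moving mechanism that drives Theorem~\ref{the:main_result}, but with a helper statement strong enough to carry $l-1$ extra paths at once. By Lemma~\ref{lem:conjecture_non_adjacent_suffices} it suffices to treat non-adjacent $s$ and $t$, so assume $\card{E(s,t)}=0$. The cases $l=1$ (Observation~\ref{obs:bineke_harary_l=1_or_k=0}), $l=2$ (Theorem~\ref{the:main_result}), and $k\le 2$ (combining Theorem~\ref{the:main_result} with Theorem~\ref{thm:enomoto_kaneko}) are already settled and serve as anchors. A connectivity pair $(k,l)$ with $l\ge 2$ always yields $\kappa(s,t)\ge k+1$: a putative order-$k$, cardinality-$(k+1)$ disconnecting pair would, after replacing enough edges by non-$\{s,t\}$ endvertices, give an order-$k$, size-$(l-1)$ pair of cardinality $k+l-1<k+l$, contradicting that $(k,l)$ is a connectivity pair. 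Hence we always begin from a \skein{s}{t}{(k+1)}, and the real task is to produce $l-1$ further $s$-$t$ paths that are edge-disjoint from one another and from the skein.

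The generalized helper I would aim for keeps a \skein{s_1}{t}{(k+1)} together with $j$ auxiliary sources $s_2,\dots,s_{j+1}$ carrying partial paths to be completed to $t$, under the hypothesis that no $\{s_1,\dots,s_{j+1}\}$-$t$ disconnecting pair of cardinality $k+j$ and order at most $k$ exists; the conclusion is $k+1$ internally disjoint $s_1$-$t$ paths plus $j$ edge-disjoint paths from the $s_i$ to $t$. For $j=1$ this is exactly Theorem~\ref{the:main}, and the reduction above shows that feeding in $j=l-1$ sources all equal to $s$ recovers the conjecture. As in Theorem~\ref{the:main}, the proof proceeds by induction on $\card{E(G)}$: one picks an active source whose partial path has just left the skein, deletes the edge $s_is_i'$ it uses to advance, checks that deleting this edge preserves both the \skein{s_1}{t}{(k+1)} and the sharpened multi-source non-separability hypothesis, applies the induction hypothesis to the graph with one fewer edge, and glues the advanced source back onto its completion, precisely as in Figures~\ref{gt::fig:stsep_s2on} and~\ref{gt::fig:stsep_s2onb}.

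The hard part will be controlling the interaction of the $l-1$ partial paths with one another. For $j=1$ the counting is clean: a hypothetical separator of the reduced graph must live entirely on the $k$ retained skein paths (it cannot touch the single partial path without contradicting the order bound), and one then transfers it back to $G$. With $j$ partial paths competing for the $k+j$ elements of the supposed separator, this pigeonhole argument becomes a delicate accounting of which of the $k+j$ edge-disjoint path-objects receives a vertex (at most $k$ of them) versus an edge, and of whether all sources remain on the $t$-side after the deletion. Equally, the final gluing must simultaneously attach every advanced source to its completion while keeping all $l-1$ completions mutually edge-disjoint and preserving \emph{exactly} $k+1$ internally disjoint paths, since several completions may wish to cross at a shared internal vertex. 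I expect that making these two steps go through for all $j$ at once will require a genuinely new structural invariant on the family of partial paths — for instance a consistent ordering that forbids crossings — beyond what the present $l=2$ argument supplies.

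As a hedge I would also try to push the decomposition method of Section~\ref{sec:treewidth} past bounded treewidth, splitting $G$ at small $s$-$t$ separators and recombining local path systems through the artificial-edge device of Theorem~\ref{thm:conjecture_tw_3}. The obstruction is that Lemma~\ref{lem:bh_tw3_st_separated}, which supplies the base case, needs the separator — hence the local first coordinate $k_i$ — to be bounded by the treewidth; for general graphs the induced local connectivity pairs $(k_i,l_i)$ can have $k_i$ as large as $k$ and $l_i$ arbitrary, so this route merely reduces the conjecture to itself rather than to a solved instance. For that reason I regard the inductive skein-moving approach with the strengthened helper as the more promising one, and I expect the crux to be the simultaneous preservation of the multi-source separation hypothesis under edge deletion together with the crossing-free gluing of the completed paths.
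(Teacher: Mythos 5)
You are attempting to prove what is precisely the paper's \emph{open conjecture}: the paper itself offers no proof of this statement, and establishes only the special cases $l=2$ for all $k$ (Theorem~\ref{the:main_result}, via the single-source helper Theorem~\ref{the:main}) and graphs of treewidth at most $3$ (Theorem~\ref{thm:conjecture_tw_3}). So there is no proof in the paper for your argument to be measured against, and your proposal does not close the gap either: what you have written is a research plan, not a proof. Your reduction step is sound --- for non-adjacent $s,t$, a disconnecting pair of cardinality $k+l-1$ and order $k'\leq k$ can have $k-k'$ of its edges replaced by endvertices outside $\{s,t\}$, producing an order-$k$, size-$(l-1)$ pair of cardinality less than $k+l$, contradicting the definition of a connectivity pair; hence a $j$-source helper instantiated with $j=l-1$ copies of $s$ would indeed recover the conjecture. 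But the helper itself for $j\geq 2$ is exactly what is missing, and you say so yourself: both the preservation of the multi-source hypothesis under deletion of the advancing edge and the simultaneous gluing of several completions are steps you only \emph{expect} to handle via ``a genuinely new structural invariant.'' Those two steps are the entire content of the induction; without them nothing is proved.

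Concretely, here is where the $j=1$ argument breaks for $j\geq 2$. In Case~1 of Theorem~\ref{the:main}, a supposed disconnecting pair $(W,F)$ of cardinality $k+1$ and order at most $k$ in $G-s_2s_2^\prime$ must place exactly one element on each of the $k+1$ pairwise internally disjoint paths $P_1,\dots,P_k,s_2^\prime P_{k+1}$, since each must be hit and there are only $k+1$ elements; this rigidity is what allows the pair to be transferred back to $G$ and contradicted against Property~\ref{prop:3}. With $j$ sources the separator has $k+j$ elements but must additionally cut the $j-1$ other partial paths, which are merely edge-disjoint from the skein and from one another, not internally disjoint; a single vertex of $W$ may lie on several of them simultaneously, so the one-element-per-path accounting collapses and the transfer argument no longer follows. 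This is consistent with the authors' own remark in Section~\ref{sec:treewidth} that extending their treewidth result to $4$ would already require the unproved case $l=3$. Your hedge via the decomposition method fails for the reason you yourself identify: the induced local pairs $(k_i,l_i)$ need not land in any solved regime, so that route reduces the conjecture to itself. In short, the proposal correctly locates the two difficulties but resolves neither, and the statement remains open after your argument exactly as it does in the paper.
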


 We further proved the intractability of the computation of the second component
in a connectivity pair. This question has been mentioned to be interesting and
open by Oellermann in~\cite{beineke2012topics}. Observe that if $k$ is considered
to be constant we can find the integer $l_k$ in polynomial time: For each
subset $S$ of $k$ vertices, determine the edge-connectivity of $s$ and $t$ in
$G-S$. The smallest among them is the desired integer $l_k$. This, of
course, implies that whenever we assume that $s$ and $t$ can be separated by a
constant number of vertices or edges, we can also determine all connectivity
pairs for $s$ and $t$. One might think that this implies polynomial time
solvability on treewidth bounded graphs. However, observe that in a graph of
constant treewidth, the vertex-connectivity between a pair of vertices may very
well not be constant. Thus, polynomial time solvability on treewidth bounded
graphs is not directly implied and the complexity of this problem remains open
to this point.

\section*{Acknowledgements} This research was not funded by any specific
granting agency in the public, commercial, or not-for-profit sectors.

\bibliographystyle{alpha}
\bibliography{Paper_Beineke_Harary}
\label{sec:biblio}\textbf{}

\end{document}